\newcommand{\sub}[1]{_{\textup{{#1}}}}
\renewcommand{\sup}[1]{^{\textup{{#1}}}}
\renewcommand{\phi}{\varphi}
\renewcommand{\geq}{\geqslant}
\renewcommand{\leq}{\leqslant}
\renewcommand{\nleq}{\nleqslant}
\newcommand{\RCA}{\mathsf{RCA}}
\newtheorem{thm}{Theorem}[section]
\newtheorem{lem}[thm]{Lemma}
\newtheorem{cor}[thm]{Corollary}
\newtheorem{prop}[thm]{Proposition}
\theoremstyle{definition}
\newtheorem{defn}[thm]{Definition}
\newtheorem{rem}[thm]{Remark}
\newtheorem{exa}[thm]{Example}
\newtheorem{q}[thm]{Question}
\numberwithin{equation}{section}
\DeclareMathOperator{\dom}{dom}
\renewcommand{\footnotemark}{}
\title{Reduction Games, Provability, and Compactness}
\author{Damir D. Dzhafarov, Denis R. Hirschfeldt,\\ and Sarah
C. Reitzes\thanks{The authors were partially supported by a Focused
Research Group grant from the National Science Foundation of the
United States, DMS-1854355 (Connecticut) and DMS-1854279
(Chicago). Hirschfeldt was also partially support by NSF grant
DMS-1600543. Reitzes was also partially supported by DGE-1746045.  The authors thank Jeff Hirst, Alberto Marcone, Carl Mummert,
Ludovic Patey, Arno Pauly, Richard Shore, Patrick Uftring, Keita Yokoyama, as well as the anonymous referee for valuable comments and suggestions during the writing
of this paper. They thank the Casa Matem\'atica Oaxaca for hosting a workshop
during which useful conversations regarding this paper occurred.}}
\begin{document}

\maketitle

\begin{abstract}
Hirschfeldt and Jockusch (2016) introduced a two-player game in which
winning strategies for one or the other player precisely correspond to
implications and non-implications between $\Pi^1_2$ principles over
$\omega$-models of $\RCA_0$. They also introduced a version of this
game that similarly captures provability over $\RCA_0$. We generalize
and extend this game-theoretic framework to other formal systems, and
establish a certain compactness result that shows that if an
implication $\mathsf{Q} \to \mathsf{P}$ between two principles holds,
then there exists a winning strategy that achieves victory in a number
of moves bounded by a number independent of the specific run of the
game. This compactness result generalizes an old proof-theoretic fact
noted by H.~Wang (1981), and has applications to the reverse
mathematics of combinatorial principles.

We also demonstrate how this framework leads to a new kind of analysis
of the logical strength of mathematical problems that refines both
that of reverse mathematics and that of computability-theoretic
notions such as Weihrauch reducibility, allowing for a
kind of fine-structural comparison between $\Pi^1_2$ principles that
has both computability-theoretic and proof-theoretic aspects, and can
help us distinguish between these, for example by showing that a
certain use of a principle in a proof is ``purely proof-theoretic'',
as opposed to relying on its computability-theoretic strength.

We give examples of this analysis to a number of principles at the
level of $\mathsf{B}\Sigma^0_2$, uncovering new differences between
their logical strengths.
\end{abstract}

\section{Introduction}

Reverse mathematics gives us a way to compare the relative strength of
theorems by establishing implications and nonimplications over a weak
subsystem of second-order arithmetic, typically $\mathsf{RCA}_0$,
which corresponds roughly to computable mathematics. (We will assume
some familiarity with reverse mathematics and computability
theory. Standard resources in these areas include~\cite{Simpson}
and~\cite{Soare}, respectively.) In many cases, nonimplications over
$\mathsf{RCA}_0$ are proved using $\omega$-models, i.e., models of
$\mathsf{RCA}_0$ with standard first-order part. We say that
$\mathsf{P}$ is \emph{$\omega$-reducible} to $\mathsf{Q}$, and write
$\mathsf{P} \leq\sub{$\omega$} \mathsf{Q}$, if every $\omega$-model of
$\mathsf{RCA}_0 + \mathsf{Q}$ is a model of $\mathsf{P}$. 

Implication over $\mathsf{RCA}_0$ and $\omega$-reducibility are not
fine enough for some purposes, so other notions of
computability-theoretic reduction between theorems have been
extensively studied. These are particularly well-adapted to the
following class of theorems, which includes a large proportion of
those that have been studied in reverse mathematics: A
\emph{$\Pi^1_2$-problem} is a sentence $\forall X\,[\Theta(X) \,
\rightarrow \, \exists Y\, \Psi(X,Y)]$ of second-order arithmetic
such that $\Theta$ and $\Psi$ are arithmetic. The term ``problem''
reflects a computability-theoretic view that sees such a sentence as a
process of finding a suitable $Y$ given $X$. In line with this view,
we say that an \emph{instance} of this problem is an $X \subseteq
\omega$ such that $\Theta(X)$ holds, and a \emph{solution} to this
problem is a $Y \subseteq \omega$ such that $\Psi(X,Y)$ holds.

For example, the following versions of Ramsey's Theorem are
$\Pi^1_2$-problems that have been extensively studied in reverse
mathematics and computability theory, and will be useful sources of
examples for us as well. (We often state $\Pi^1_2$-problems in ways
that make mention of objects other than natural numbers and sets of
natural numbers. We assume these are coded in an appropriate way. For
combinatorial objects like the ones below, these codings are
straightforward and do not affect the analysis of these problems.)

\begin{defn}
\label{rtdef}
For a set $X$, let $[X]^n$ be the collection of $n$-element subsets of
$X$. A \emph{$k$-coloring} of $[X]^n$ is a map $c : [X]^n \rightarrow
k$. A coloring of $[X]^2$ is \emph{stable} if $\lim_{y \in X} c(x,y)$
exists for all $x \in X$. A set $H \subseteq X$ is \emph{homogeneous}
for $c : [X]^n \rightarrow k$ if there is an $i$ such that $c(s)=i$
for all $s \in [H]^n$. A set $L \subseteq X$ is
\emph{limit-homogeneous} for $c : [X]^2 \rightarrow k$ if there is an
$i$ such that $\lim_{y \in L} c(x,y)=i$ for all $x \in L$.
\begin{enumerate}

\item $\mathsf{RT}^n_k$: Every $k$-coloring of $[\mathbb N]^n$ has an
infinite homogeneous set.

\item $\mathsf{RT}^n_{<\infty}$: $\forall k\, \mathsf{RT}^n_k$.

\item $\mathsf{RT}$: $\forall n\, \forall k\, \mathsf{RT}^n_k$.

\item $\mathsf{SRT}^2_k$: Every stable $k$-coloring of $[\mathbb N]^2$
has an infinite homogeneous set.

\item $\mathsf{D}^2_k$: Every stable $k$-coloring of $[\mathbb N]^2$
has an infinite limit-homogeneous set.

\end{enumerate}
\end{defn}

It is well-known that $\mathsf{RT}^n_k$ and $\mathsf{RT}^n_{<\infty}$
are equivalent to $\mathsf{ACA}_0$ for each $n \geq 3$ (we always
assume $k \geq 2$), while $\mathsf{RT}^1_k$ is provable in
$\mathsf{RCA}_0$. (We will discuss $\mathsf{RT}^1_{<\infty}$ and
$\mathsf{RT}$ below. For more on the computability theory and reverse
mathematics of these principles, see~\cite{Hirschfeldt}.)  The
question of whether $\mathsf{SRT}^2_2$ implies $\mathsf{RT}^2_2$
motivated a great deal of research since being raised by Cholak,
Jockusch, and Slaman~\cite{CJS}. Chong, Slaman, and Yang~\cite{CSY}
showed that $\mathsf{RCA}_0 \nvdash \mathsf{SRT}^2_2 \rightarrow
\mathsf{RT}^2_2$, with a proof that made essential use of
non-$\omega$-models. Recently, Monin and Patey~\cite{MonPat} have
finally shown that $\mathsf{RT}^2_2 \nleq\sub{$\omega$}
\mathsf{SRT}^2_2$. The relationship between $\mathsf{SRT}^2_2$ and
$\mathsf{D}^2_2$ is also interesting, and will be discussed in
Section~\ref{exsec}.

Hirschfeldt and Jockusch~\cite{HirJoc} gave characterizations of both
$\mathsf{P} \leq\sub{$\omega$} \mathsf{Q}$ and $\mathsf{RCA}_0 \vdash
\mathsf{Q} \rightarrow \mathsf{P}$ for $\Pi^1_2$-problems $\mathsf{P}$
and $\mathsf{Q}$ in terms of winning strategies in certain games. In
this paper, we study further aspects of the latter characterization
and generalizations of it, in particular establishing a compactness
theorem that shows that certain winning strategies can always be
chosen to win in a number of moves bounded by a number independent of
the instance of $\mathsf{P}$ being considered. As explained below,
this theorem can be seen as a generalization of a metatheorem about
$\mathsf{ACA}_0$. This metatheorem has been used, for instance, to translate
computability-theoretic results of Jockusch~\cite{Jockusch} into a
proof that $\mathsf{ACA}_0 \nvdash \mathsf{RT}$.

The difference between the two game-theoretic characterizations
in~\cite{HirJoc} is that for $\omega$-reducibility, the games are
played over the standard natural numbers, while for provability over
$\mathsf{RCA}_0$ they are played over possibly nonstandard models of
$\Sigma^0_1\textup{-}\mathsf{PA}$ (the first-order part of
$\mathsf{RCA}_0$). We hope to show in this paper that there is a rich
theory to be obtained by generalizing computability-theoretic
reductions between $\Pi^1_2$-problems to models of subsystems of
second-order arithmetic with possibly nonstandard first-order parts,
and to begin its systematic development. In particular, this theory
allows us to conduct a fine-structural comparison between such
problems that has both computability-theoretic and proof-theoretic
aspects, and can help us distinguish between these, for example by
showing that a certain use of a principle in a proof is ``purely
proof-theoretic'', as opposed to relying on its
computability-theoretic strength.

\bigskip

Computable reducibility and Weihrauch reducibility are two of the most
widely-studied notions of computability-theoretic reducibility between
$\Pi^1_2$-problems. The latter (in a more general form) has a long
history, particularly in computable analysis (see e.g.\ \cite{BGP}),
while the former was introduced by Dzhafarov~\cite{Dzhafarov}.

\begin{defn}
Let $\mathsf{P}$ and $\mathsf{Q}$ be $\Pi^1_2$-problems.

We say that $\mathsf{P}$ is \emph{computably reducible} to
$\mathsf{Q}$, and write $\mathsf{P} \leq\sub{c} \mathsf{Q}$, if for
every instance $X$ of $\mathsf{P}$, there is an $X$-computable
instance $\widehat{X}$ of $\mathsf{Q}$ such that, for every solution
$\widehat{Y}$ to $\widehat{X}$, there is an $X \oplus
\widehat{Y}$-computable solution to $X$.

We say that $\mathsf{P}$ is \emph{Weihrauch reducible} to
$\mathsf{Q}$, and write $\mathsf{P} \leq\sub{W} \mathsf{Q}$, if there
are Turing functionals $\Phi$ and $\Psi$ such that, for every instance
$X$ of $\mathsf{P}$, the set $\widehat{X}=\Phi^X$ is an instance of
$\mathsf{Q}$, and for every solution $\widehat{Y}$ to $\widehat{X}$,
the set $Y=\Psi^{X \oplus \widehat{Y}}$ is a solution to $X$.
\end{defn}

These two reducibilities allow us to use only a single instance of
$\mathsf{Q}$ in solving an instance of $\mathsf{P}$. To generalize
these notions to allow multiple instances of $\mathsf{Q}$ to be used,
Hirschfeldt and Jockusch~\cite{HirJoc} defined the following game.

\begin{defn}
\label{redgame}
Let $\mathsf{P}$ and $\mathsf{Q}$ be $\Pi^1_2$-problems. The
\emph{reduction game} $G(\mathsf{Q} \rightarrow \mathsf{P})$ is a
two-player game played according to the following rules.
\begin{enumerate}[\rm (1)]

\item If at any point a player cannot make a move, the opponent wins.

\item If one of the players wins, the game ends.

\item On the first move, Player 1 plays an instance $X_0$ of
$\mathsf{P}$. Then Player 2 either plays an $X_0$-computable solution
to $X_0$ and wins, or plays an $X_0$-computable instance
$Y_1$ of $\mathsf{Q}$.

\item For $n > 1$, on the $n$th move, Player 1 plays a solution
$X_{n-1}$ to the instance $Y_{n-1}$ of $\mathsf{Q}$. Then Player 2
either plays an $(X_0 \oplus \cdots \oplus X_{n-1})$-computable
solution to $X_0$ and wins, or plays an $(X_0 \oplus \cdots \oplus
X_{n-1})$-computable instance $Y_n$ of $\mathsf{Q}$.

\item If the game never ends then Player 1 wins.

\end{enumerate}
\end{defn}

A winning strategy for Player 2 in this game is a form of generalized
computable reduction. Hirschfeldt and Jockusch~\cite{HirJoc} showed
that if $\mathsf{P} \leq\sub{$\omega$} \mathsf{Q}$ then Player 2 has a
winning strategy for $G(\mathsf{Q} \rightarrow \mathsf{P})$, while
otherwise Player 1 has a winning strategy for $G(\mathsf{Q}
\rightarrow \mathsf{P})$, so generalized computable reducibility is
actually the same as $\omega$-reducibility. They then defined an
analogous notion of generalized Weihrauch reducibility, where
$\mathsf{P} \leq\sub{gW} \mathsf{Q}$ if Player 2 has a uniformly
computable winning strategy for $G(\mathsf{Q} \rightarrow
\mathsf{P})$. (See~\cite{HirJoc} for the details of this definition.)
Neumann and Pauly~\cite{NeuPau} gave an equivalent definition in terms
of an operator ${}^\diamond$ on the Weihrauch degrees. (See also \cite{Westrick}
for some more recent discussion of, and results about, the ${}^\diamond$
operator.)

\bigskip

We can generalize the notions of instance and solution of a
$\Pi^1_2$-problem $\mathsf{P} \equiv \forall X\,[\Theta(X) \,
\rightarrow \, \exists Y\, \Psi(X,Y)]$ to possibly nonstandard
structures in the language of first-order arithmetic in a natural
way. We denote the languages of first- and second-order arithmetic by
$L_1$ and $L_2$, respectively. Let $M$ be an $L_1$-structure. We
denote the domain of $M$ by $|M|$. For $S \subseteq |M|$, we denote
the $L_2$-structure with first-order part $M$ and second-order part
$S$ by $(M,S)$. For an $L_1$-structure $M$, an \emph{$M$-instance of
$\mathsf P$} is an $X \subseteq |M|$ such that $(M,\{X\}) \vDash
\Theta(X)$, and a \emph{solution} to this instance is a $Y \subseteq
|M|$ such that $(M,\{X,Y\}) \vDash \Psi(X,Y)$.

Hirschfeldt and Jockusch~\cite[Section 4.5]{HirJoc} noted that
reduction games can be extended to possibly nonstandard countable
models of $\Sigma^0_1\textup{-}\mathsf{PA}$ (i.e., first-order parts
of models of $\mathsf{RCA}_0$), with $\Delta^0_1$-definability playing
the role of computability as follows. For $X_0,\ldots,X_n \subseteq
|M|$, we denote by $M[X_0,\ldots,X_n]$ the $L_2$-structure with
first-order part $M$ and second-order part consisting of all $X \subseteq
|M|$ that are $\Delta^0_1$-definable over $|M| \cup
\{X_0,\ldots,X_n\}$, which means that there are $\Sigma^0_1$ formulas
$\phi_0(x)$ and $\phi_1(x)$ with parameters from $|M| \cup
\{X_0,\ldots,X_n\}$ such that $(M,\{X_0,\ldots,X_n\}) \vDash \forall
x\, (\phi_0(x) \leftrightarrow \neg\phi_1(x))$ and $X = \{n \in
|\mathcal M| : (M,\{X_0,\ldots,X_n\}) \vDash \phi_0(n)\}$.

\begin{defn}
\label{generalgames}
Let $\mathsf{P}$ and $\mathsf{Q}$ be $\Pi^1_2$-problems. The
\emph{$\mathsf{RCA}_0$-reduction game} $G^{\mathsf{RCA}_0}(\mathsf{Q}
\rightarrow \mathsf{P})$ is a two-player game played according to the
following rules.
\begin{enumerate}[\rm (1)]

\item If at any point a player cannot make a move, the opponent wins.

\item If one of the players wins, the game ends.

\item On the first move, Player 1 plays a countable $L_1$-structure
$M$ and an $M$-instance $X_0$ of $\mathsf{P}$ such that $M[X_0]
\vDash \mathsf{RCA}_0$. Then Player 2 either plays a solution to $X_0$
in $M[X_0]$ and wins, or plays an $M$-instance $Y_1$ of $\mathsf{Q}$
in $M[X_0]$.

\item For $n > 1$, on the $n$th move, Player 1 plays a solution
$X_{n-1}$ to the instance $Y_{n-1}$ of $\mathsf{Q}$ such that
$M[X_0,\ldots,X_{n-1}] \vDash \mathsf{RCA}_0$. Then Player 2
either plays a solution to $X_0$ in $M[X_0,\ldots,X_{n-1}]$ and wins,
or plays an $M$-instance $Y_n$ of $\mathsf{Q}$ in
$M[X_0,\ldots,X_{n-1}]$.

\item If the game never ends then Player 1 wins.

\end{enumerate}
\end{defn}

This definition allows us to capture provability over $\mathsf{RCA}_0$
in terms of winning strategies.

\begin{prop}[Hirschfeldt and Jockusch~\cite{HirJoc}]
\label{hjprop}
Let $\mathsf{P}$ and $\mathsf{Q}$ be $\Pi^1_2$-prob\-lems. If
$\mathsf{RCA}_0 \vdash \mathsf{Q} \rightarrow \mathsf{P}$ then Player
2 has a winning strategy for $G^{\mathsf{RCA}_0}(\mathsf{Q}
\rightarrow \mathsf{P})$. Otherwise, Player 1 has a winning strategy
for $G^{\mathsf{RCA}_0}(\mathsf{Q} \rightarrow \mathsf{P})$.
\end{prop}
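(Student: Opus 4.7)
My plan is to prove the two directions separately, without appealing to a general determinacy theorem.

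For the easier direction, assume $\mathsf{RCA}_0 \nvdash \mathsf{Q} \rightarrow \mathsf{P}$. By completeness and downward L\"owenheim--Skolem, fix a countable $(M, S) \vDash \mathsf{RCA}_0 + \mathsf{Q}$ together with an $M$-instance $X_0 \in S$ of $\mathsf{P}$ that has no solution in $S$. Player 1 opens by playing $M$ and $X_0$; thereafter, whenever Player 2 plays $Y_n \in M[X_0, \ldots, X_{n-1}]$, Player 1 responds with a solution $X_n \in S$ to $Y_n$. Such an $X_n$ exists by induction on $n$: each earlier $X_i$ lies in $S$, and the $\Delta^0_1$-closure of $\{X_0, \ldots, X_{n-1}\}$ over $M$ is contained in $S$ by $\Delta^0_1$-comprehension in $(M, S)$, so $Y_n \in S$ and $(M, S) \vDash \mathsf{Q}$ supplies $X_n$. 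The same closure properties yield $M[X_0, \ldots, X_n] \vDash \mathsf{RCA}_0$. Since $S$ contains no solution to $X_0$, Player 2 can never win, and Player 1 wins by clauses (1) and (5) of the game.

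For the converse, assume $\mathsf{RCA}_0 \vdash \mathsf{Q} \rightarrow \mathsf{P}$, and define Player 2's \emph{exhaustion} strategy as follows. On move $n$: if some $\Delta^0_1$-definition over $\{X_0, \ldots, X_{n-1}\}$ produces a solution to $X_0$, play one such solution and win; otherwise, play the next unplayed $M$-instance of $\mathsf{Q}$ according to a fixed fair dovetailing of pairs $(s, k)$, where $(s, k)$ codes the $k$-th $M$-instance of $\mathsf{Q}$ in $M[X_0, \ldots, X_s]$ under a canonical enumeration of $\Delta^0_1$-codes. Suppose for contradiction the strategy never wins, and let $S$ be the union of the second-order parts of the $M[X_0, \ldots, X_n]$ across $n$. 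Then $(M, S) \vDash \mathsf{RCA}_0$, since its axioms are preserved under ascending unions of $\Delta^0_1$-closed second-order parts each satisfying $\mathsf{RCA}_0$; and $(M, S) \vDash \mathsf{Q}$ by the dovetailing, as every $M$-instance of $\mathsf{Q}$ in $S$ first appears at some stage $s$ and is later played by Player 2 and solved by Player 1 within $S$. By the hypothesis $(M, S) \vDash \mathsf{P}$, so $X_0$ has a solution in $S$, hence in some $M[X_0, \ldots, X_N]$; this contradicts the assumption that Player 2 never wins.

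The main obstacle is designing the fair dovetailing so that every $M$-instance of $\mathsf{Q}$ that ever appears in the final $S$ is played at some finite move, despite new instances becoming available as more parameters $X_i$ are added; a standard Cantor-style dovetailing over $(s, k)$ suffices. A secondary technical point is the preservation of $\mathsf{RCA}_0$ under the relevant unions and substructures: $\Delta^0_1$-comprehension is immediate from the closure properties, while each instance of $\Sigma^0_1$-induction, once its set parameters are fixed in some $M[X_0, \ldots, X_s]$, reduces to a first-order statement about $|M|$ that holds in every such $M[X_0, \ldots, X_s]$ and hence in $(M, S)$.
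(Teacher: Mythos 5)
Your proof follows essentially the same route as the paper's (which proves the stronger Proposition~\ref{auxprop}): Player 1's strategy is to play inside a countable model of $\mathsf{RCA}_0 + \mathsf{Q} + \neg\mathsf{P}$, and Player 2's strategy dovetails through all $M$-instances of $\mathsf{Q}$ appearing at any stage, with the union of the stages forming a model of $\mathsf{RCA}_0 + \mathsf{Q}$, hence of $\mathsf{P}$, if Player 2 never wins. Both halves are correct in substance.

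One point is glossed over, and it is exactly the point the paper isolates as Lemma~\ref{lem:P2canplay}: your ``otherwise'' clause instructs Player 2 to play the next unplayed $M$-instance of $\mathsf{Q}$, but this presupposes that $M[X_0,\ldots,X_{n-1}]$ contains \emph{some} $M$-instance of $\mathsf{Q}$. If it contains none and also no solution to $X_0$, Player 2 cannot move and loses by rule (1), and your contradiction argument, which is phrased for an infinite run (``the union \ldots across $n$,'' every instance ``is later played and solved''), does not literally apply. The repair is the same model-theoretic observation you already use: $M[X_0,\ldots,X_{n-1}]$ is itself a model of $\mathsf{RCA}_0$, satisfies $\mathsf{Q}$ vacuously when it has no $\mathsf{Q}$-instances, hence satisfies $\mathsf{P}$, and so already contains a solution to $X_0$ --- meaning Player 2 was not in fact stuck. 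You should state this explicitly so that the strategy is well defined at every move; with that addition the argument is complete.
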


The proof of this proposition is essentially the same as that of the
analogous result for games over the standard natural numbers and
$\omega$-reducibility in~\cite[Proposition 4.2]{HirJoc}. We will prove
a stronger version in Proposition~\ref{auxprop}.

However, it might be that the above definition is not quite the best
one. In Section~\ref{classicalsec}, we will discuss a modified
game. We will define it for arbitrary subsystems of second-order
arithmetic, but in the case of $\mathsf{RCA}_0$, the modified game
$\widehat{G}^{\mathsf{RCA}_0}(\mathsf{Q} \rightarrow \mathsf{P})$ is
defined as above, except that on its first move, Player 1 must play
not only a countable $L_1$-structure $M$, but a model $\mathcal M$ of
$\mathsf{RCA}_0$ with countable first-order part (but possibly
uncountable second-order part); and from then on, its moves
$X_0,X_1,\ldots$ must all come from $\mathcal M$. This game makes
intuitive sense in that if Player 1 is trying to claim that
$\mathsf{RCA}_0 \nvdash \mathsf{Q} \rightarrow \mathsf{P}$, then it
should be prepared to propose a model of $\mathsf{RCA}_0$ within which
to witness this fact. This idea was not noticed in~\cite{HirJoc}
because in the $\omega$-model case after which the original
$\mathsf{RCA}_0$-reduction game was modeled, there is really no issue,
since Player 1 always automatically plays within a particular model of
$\mathsf{RCA}_0$, namely $(\omega,\mathcal P(\omega))$, where
$\mathcal P(\omega)$ is the full power set of $\omega$. (For a
nonstandard model $M$, of course, the full power set will include a
cut, so we cannot add it to $M$ to obtain a model of RCA$_0$.)

As we will see in Section~\ref{classicalsec}, Proposition~\ref{hjprop}
still holds for this modified game, indeed with the same proof. But we
will also be able to prove a stronger version that shows that a
certain kind of compactness theorem holds in this case: As shown
in~\cite{HirJoc}, for a game $G(\mathsf{Q} \rightarrow \mathsf{P})$
over the standard natural numbers, it is possible that Player 2 has a
winning strategy but there is no $n$ such that Player 2 has a winning
strategy that is guaranteed to win in at most $n$ many moves. As we
will show in Section~\ref{compactnesssec}, for our modified games over
possibly nonstandard models, this will no longer be the case, which
makes sense given that these games capture notions of provability, and
a proof of $\mathsf{Q} \rightarrow \mathsf{P}$ is a finite object.

\begin{thm}
\label{rcathm}
Let $\mathsf{P}$ and $\mathsf{Q}$ be $\Pi^1_2$-problems. If
$\mathsf{RCA}_0 \vdash \mathsf{Q} \rightarrow \mathsf{P}$ then there
is an $n$ such that Player 2 has a winning strategy for
$\widehat{G}^{\mathsf{RCA}_0}(\mathsf{Q} \rightarrow \mathsf{P})$ that
ensures victory in at most $n$ many moves. Otherwise, Player 1 has a
winning strategy for $\widehat{G}^{\mathsf{RCA}_0}(\mathsf{Q}
\rightarrow \mathsf{P})$.
\end{thm}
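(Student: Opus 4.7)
The plan is to handle the two directions separately. For the backward direction, assume $\mathsf{RCA}_0 \nvdash \mathsf{Q} \rightarrow \mathsf{P}$; by completeness fix a countable $\mathcal{M} \vDash \mathsf{RCA}_0 + \mathsf{Q} + \neg\mathsf{P}$, and have Player 1 open by playing $\mathcal{M}$ together with an $M$-instance $X_0$ of $\mathsf{P}$ having no solution in $\mathcal{M}$. Each Player 2 move $Y_n$ lies in $M[X_0,\ldots,X_{n-1}] \subseteq \mathcal{M}$; since $\mathcal{M} \vDash \mathsf{Q}$, Player 1 picks a solution $X_n$ inside $\mathcal{M}$, and $M[X_0,\ldots,X_n]$ remains a substructure of $\mathcal{M}$ satisfying $\mathsf{RCA}_0$. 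Since no solution to $X_0$ is ever available in $\mathcal{M}$, Player 2 loses.

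For the forward direction, suppose $\mathsf{RCA}_0 \vdash \mathsf{Q} \rightarrow \mathsf{P}$ and assume toward contradiction that for every $n$ Player 2 has no winning strategy for $\widehat{G}^{\mathsf{RCA}_0}(\mathsf{Q} \rightarrow \mathsf{P})$ of length $\leq n$. The plan is to apply first-order compactness to construct a countable $\mathcal{N} \vDash \mathsf{RCA}_0 + \mathsf{Q} + \neg\mathsf{P}$, contradicting the hypothesis. Expand $L_2$ with set constants $c_0,c_1,\ldots$ (intended Player 1 moves) and $d_1,d_2,\ldots$ (intended Player 2 $\mathsf{Q}$-instances), and let $T$ be the theory asserting: (i) $\mathsf{RCA}_0$; (ii) $c_0$ is an instance of $\mathsf{P}$; (iii) for each $i \geq 1$ a specified $\Sigma^0_1$-pair defines $d_i$ over $c_0,\ldots,c_{i-1}$ as an instance of $\mathsf{Q}$ with $c_i$ as a solution, with the enumeration arranged so that every $\Delta^0_1$-definable $\mathsf{Q}$-instance over $\{c_0,c_1,\ldots\}$ appears as some $d_i$; and (iv) for every $\Sigma^0_1$-pair over $\{c_0,c_1,\ldots\}$, if it defines a set, then that set is not a solution to $c_0$. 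A model of $T$, restricted to its $\Delta^0_1$-definable sets over $\{c_0,c_1,\ldots\}$, then satisfies $\mathsf{RCA}_0 + \mathsf{Q} + \neg\mathsf{P}$.

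Finite satisfiability of $T$ is the crux. A finite $T_0 \subseteq T$ mentions only $c_0,\ldots,c_n$, $d_1,\ldots,d_n$ and finitely many candidate pairs $\phi_1,\ldots,\phi_k$ from clause (iv). Realizing $T_0$ reduces to producing a length-$n$ play of $\widehat{G}^{\mathsf{RCA}_0}$ in which none of the $\phi_j$ defines a solution to $c_0 = X_0$ inside $M[X_0,\ldots,X_n]$. If no such play existed, the Player 2 strategy that plays $d_1,\ldots,d_n$ and, at move $n+1$, inspects the history and offers the set of some $\phi_j$ that currently defines a solution, would be a winning strategy in $\leq n+1$ moves, contradicting the unboundedness hypothesis. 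So $T$ is finitely satisfiable, compactness yields a model, and we reach a contradiction.

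The main obstacle is the setup of $T$, particularly clause (iii): arranging the $d_i$ so that every $\Delta^0_1$-definable $\mathsf{Q}$-instance over $\{c_0,c_1,\ldots\}$ is captured requires carefully interleaving the introduction of constants and $\Sigma^0_1$-defining pairs, since the list of available pairs expands as new $c_j$ are named. Once this bookkeeping is arranged, the strategy-stealing argument for finite satisfiability is the conceptual heart of the proof, and the passage from the compactness model to an actual countermodel of $\mathsf{Q} \rightarrow \mathsf{P}$ over $\mathsf{RCA}_0$ is a routine verification that $\Delta^0_1$-definable closures satisfy the required axioms.
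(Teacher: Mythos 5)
Your second half is exactly the paper's argument: when $\mathsf{RCA}_0 \nvdash \mathsf{Q} \rightarrow \mathsf{P}$, Player 1 plays a countable model of $\mathsf{RCA}_0 + \mathsf{Q} + \neg\mathsf{P}$ and an unsolvable instance, and stays inside it. The first half has the right overall shape (compactness plus strategy-stealing for finite satisfiability), but the theory $T$ cannot deliver what you need, and the step you call ``routine verification'' is where the argument breaks. In the compactness model $\mathcal N$, the structure you must show satisfies $\mathsf{Q}$ has as second-order part all sets $\Delta^0_1$-definable over $|N| \cup \{c_0^{\mathcal N}, c_1^{\mathcal N}, \ldots\}$, i.e., the sets $\Phi_e^{c_{i_1} \oplus \cdots \oplus c_{i_m}}$ where the index $e$ (equivalently, the first-order parameters of the defining $\Sigma^0_1$ pair) ranges over all of $|N|$, including nonstandard elements. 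So the $\mathsf{Q}$-instances requiring solutions form families indexed \emph{internally} by $|N|$, not externally by $\omega$; a countable list of constants $d_1, d_2, \ldots$, each naming the single set cut out by one ``specified'' pair, cannot enumerate them, and no interleaving of the bookkeeping fixes this. Moreover each solution $c_i$ you add spawns further internally indexed definable $\mathsf{Q}$-instances that must in turn be solved. Clause (iv) survives, since ``$c_0$ has no solution'' is universal and can be written as the scheme $\forall e\,[\Phi_e^{c_{i_1} \oplus \cdots \oplus c_{i_m}}$ is not a solution to $c_0]$; but clause (iii) needs existential witnesses for internally indexed families, which constants cannot supply. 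Hence the limit structure need not model $\mathsf{Q}$, and no contradiction with $\mathsf{RCA}_0 \vdash \mathsf{Q} \rightarrow \mathsf{P}$ is obtained.

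The paper's proof is organized precisely to dodge this. Proposition~\ref{auxprop} handles ``closing off under $\mathsf{Q}$'' semantically, by dovetailing through the countably many $\mathsf{Q}$-instances of each $M[X_0,\ldots,X_{n-1}]$ over a fixed countable model; that takes unboundedly many rounds, so it only yields an unbounded winning strategy. The compactness argument (Theorem~\ref{classicalauxthm}) then aims at a different target: using a \emph{function symbol} $f$ from first-order to second-order objects --- which, unlike your constants, is evaluated at all internal tuples $\langle e_0,\ldots,e_j \rangle$ --- it produces a model carrying a Player 1 strategy that defeats every bounded Player 2 play of $G^{\Gamma}(\mathsf{Q} \rightarrow \mathsf{P})$, and the contradiction is with the strategy from Proposition~\ref{auxprop}, not with the provability of $\mathsf{Q} \rightarrow \mathsf{P}$ directly. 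This yields a sentence $\Delta_n$ provable in $\Gamma$, from which the bounded strategy in the modified game is read off, using the fact that Player 1 must now hand over a full model of $\Gamma$ and hence of $\Delta_n$. To rescue your version you would have to replace the $d_i$ and their solutions by Skolem function symbols and close off the resulting hierarchy of definable instances --- at which point you have essentially reconstructed the paper's argument.
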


We do not know whether the first part of this result holds for the
game $G^{\mathsf{RCA}_0}(\mathsf{Q} \rightarrow \mathsf{P})$ as well.

Theorem~\ref{rcathm}, whose proof will in fact use the compactness
theorem for first-order logic, can be seen as a generalization of the
following fact, which appears in Wang~\cite{Wang}, where it is said
that it is ``almost certainly a known theorem in proof theory.'' For a
model-theoretic proof using compactness due to Jockusch,
see~\cite[Section 6.3]{Hirschfeldt}.

\begin{thm}[see Wang~\cite{Wang}]
\label{wangthm}
Let $\mathsf{P} \equiv \forall X\,[\Theta(X) \, \rightarrow \, \exists
Y\, \Psi(X,Y)]$ be a $\Pi^1_2$-problem. If $\mathsf{P}$ is provable
in $\mathsf{ACA}_0$, then there is an $n \in \omega$ such that
$\mathsf{ACA}_0$ proves $\forall X\,[\Theta(X) \, \rightarrow \,
\exists Y \in \Sigma^{0,X}_n\, \Psi(X,Y)]$.
\end{thm}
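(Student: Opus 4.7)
The plan is to argue by contradiction using the compactness theorem for first-order logic, essentially building a pathological instance of $\mathsf{P}$ in a model of $\ACA_0$ whose second-order part consists only of sets arithmetic in that instance. Suppose, toward contradiction, that for every $n \in \omega$, $\ACA_0 \nvdash \forall X\,[\Theta(X) \rightarrow \exists Y \in \Sigma^{0,X}_n\, \Psi(X,Y)]$. Introduce a fresh constant symbol $c$, and for each arithmetic $L_1 \cup \{c\}$-formula $\varphi(x)$, let $\Psi^*(\varphi)$ denote the arithmetic $L_1 \cup \{c\}$-sentence obtained from $\Psi(X, Y)$ by substituting $c$ for $X$ and replacing each atomic subformula $t \in Y$ by $\varphi(t)$. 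Form the $L_1 \cup \{c\}$-theory
$$T \defeq \mathsf{PA} + \Theta(c) + \{\neg \Psi^*(\varphi) : \varphi(x) \text{ an arithmetic } L_1 \cup \{c\}\text{-formula}\}.$$
A model of $T$ provides a first-order structure in which $c$ is an instance of $\mathsf{P}$ admitting \emph{no} arithmetically definable solution.

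I would next verify $T$ is consistent by compactness. Any finite $T_0 \subseteq T$ mentions only finitely many formulas $\varphi_1, \ldots, \varphi_k$; pick $N$ bounding their arithmetic complexity, so each $\{x : \varphi_i(x)\}$ is $\Sigma^{0,c}_N$-definable. By the contradiction hypothesis applied to $N$, there is a model of $\ACA_0$ containing an instance $X$ of $\mathsf{P}$ admitting no $\Sigma^{0,X}_N$ solution; interpreting $c$ as $X$ in the $L_1$-reduct witnesses that $T_0$ is satisfiable. By compactness, $T$ has a model $M$. Let $\mathcal S$ be the arithmetic closure of $\{c^M\}$ in $M$, i.e., the collection of $Y \subseteq |M|$ definable from $c^M$ by an arithmetic $L_1 \cup \{c\}$-formula. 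Standard arguments give $(M, \mathcal S) \vDash \ACA_0$: first-order induction in $M$ supplies the induction axiom, and arithmetic closure supplies arithmetic comprehension (parameters in $\mathcal S$ unfold to their defining formulas in $c^M$). Because $(M, \mathcal S) \vDash \Theta(c^M)$ and by hypothesis $\ACA_0 \vdash \mathsf{P}$, some $Y \in \mathcal S$ satisfies $\Psi(c^M, Y)$ in $(M, \mathcal S)$. Fixing an arithmetic $\varphi$ defining $Y$ from $c^M$, the semantic equivalence between set membership in $(M, \mathcal S)$ and the substituted formula yields $M \vDash \Psi^*(\varphi)$, contradicting the axiom $\neg \Psi^*(\varphi) \in T$.

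The main obstacle I anticipate is the bookkeeping needed in two subsidiary points: first, verifying that the arithmetic closure of $\{c^M\}$ genuinely satisfies $\ACA_0$ (in particular, that induction on arithmetic-with-set-parameter formulas reduces to first-order induction once the set parameters are unfolded to their defining formulas); and second, verifying the semantic equivalence between the satisfaction of $\Psi(X, Y)$ in the $L_2$-structure $(M, \mathcal S)$ and the satisfaction of its syntactic substitute $\Psi^*(\varphi)$ in $M$, which requires a careful induction on the structure of $\Psi$. Both points are essentially folklore, and once they are settled the compactness argument above yields the theorem without needing to compute the bound $n$ explicitly.
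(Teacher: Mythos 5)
Your overall strategy---assume the bound fails for every $n$, use compactness to produce a single model containing an instance of $\mathsf{P}$ with no arithmetically definable solution, close off under arithmetic definability to get a model of $\ACA_0$, and contradict $\ACA_0 \vdash \mathsf{P}$---is the right one, and it is essentially the model-theoretic argument the paper points to (Jockusch's proof, cited from \cite[Section 6.3]{Hirschfeldt}) and generalizes in Theorem~\ref{classicalauxthm}. But there is a genuine gap at the interface between the axioms $\neg\Psi^*(\varphi)$ of $T$ and the structure $(M,\mathcal S)$ you build at the end. Each $\neg\Psi^*(\varphi)$ is a \emph{sentence} of $L_1\cup\{c\}$, so $T$ only excludes solutions defined by parameter-free formulas. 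On the other hand, for $(M,\mathcal S)\vDash\ACA_0$ you need $\mathcal S$ to be closed under arithmetic comprehension \emph{with number parameters}: for instance $\{x : x<a\}$ must lie in $\mathcal S$ for every $a\in|M|$, including elements not definable from $c^M$, so $\mathcal S$ must consist of the sets definable from $c^M$ by formulas with arbitrary parameters from $|M|$. The solution $Y\in\mathcal S$ that $\ACA_0\vdash\mathsf{P}$ then hands you is defined by some $\varphi(x,\bar a)$ with $\bar a\in|M|$, and $\neg\Psi^*(\varphi(\cdot,\bar a))$ is simply not an axiom of $T$, so the final contradiction does not follow. (If instead you take $\mathcal S$ to contain only the parameter-free definable sets, then $(M,\mathcal S)$ need not satisfy comprehension, and the other horn of the argument fails.)

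The gap is fixable in either of two standard ways. One is to replace each axiom $\neg\Psi^*(\varphi(x))$ by $\forall\bar y\,\neg\Psi^*(\varphi(x,\bar y))$ for every formula $\varphi(x,\bar y)$; the finite-satisfiability step still goes through, because an instance with no $\Sigma^{0,X}_N$ solution in the intended sense of the theorem (quantifying over all indices into a universal $\Sigma^0_N$ formula, hence over all parameterized definitions) refutes all of these universally quantified sentences. The other is to pass first to the elementary substructure of $M$ consisting of the elements definable in $(M;c^M)$, using the definable Skolem functions of $\mathsf{PA}$, so that parameters become superfluous. Note that the paper's generalization in Theorem~\ref{classicalauxthm} sidesteps the issue from the outset by working with indices $e$ into a universal formula rather than with syntactic formulas, which is exactly what absorbs the parameters. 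Your two flagged subsidiary points (induction in the arithmetic closure, and the substitution lemma for $\Psi^*$) are indeed routine; this third point is the one that needs attention. You should also make explicit that the $\mathsf{PA}$ in $T$ means induction for all formulas of the expanded language $L_1\cup\{c\}$, since that is what supplies induction in $(M,\mathcal S)$.
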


As mentioned above, this theorem implies for instance that
$\mathsf{ACA}_0 \nvdash \mathsf{RT}$, because Jockusch~\cite{Jockusch}
showed that for each $n \geq 2$, there is an instance of
$\mathsf{RT}^n_2$ (and hence of $\mathsf{RT}$) with no $\Sigma^0_n$
solutions. On the other hand, Jockusch also showed that every instance
of $\mathsf{RT}^n_k$ has a $\Pi^0_n$ solution, which implies that
every $\omega$-model of $\mathsf{ACA}_0$ is a model of $\mathsf{RT}$.

Notice that if we take $\mathsf{Q}$ to be the statement that for each
$X$, the Turing jump $X'$ exists, then the provability of $\mathsf{P}$
in $\mathsf{ACA}_0$ is equivalent to the provability of $\mathsf{Q}
\rightarrow \mathsf{P}$ in $\mathsf{RCA}_0$. As part of the proof of
Theorem~\ref{rcathm} in Section~\ref{compactnesssec}, we will prove a
theorem that is a direct generalization of
Theorem~\ref{wangthm}. Montalb\'an and Shore~\cite{MonSho} also
generalized this theorem, in a different way that is particularly
suited to problems where each instance has a unique solution, and is
indeed equivalent to ours in that case, but is not strong enough for
our purposes.

As an example of the application of Theorem~\ref{rcathm}, we will
obtain a simple proof that $\mathsf{RT}^2_2$ does not imply
$\mathsf{RT}^2_{<\infty}$, even over $\mathsf{RCA}_0$ together with all
$\Pi^1_1$ formulas true over the natural numbers.

\bigskip

Let $\Gamma$ be a class of formulas. Recall that $\mathsf{I}\Gamma$
is the axiom scheme stating that induction holds for formulas in
$\Gamma$. Recall also that the \emph{$\Gamma$-bounding axiom scheme}
$\mathsf{B}\Gamma$ consists of all formulas of the form
\[
\forall n\,[\forall i<n\, \exists k\, \phi(i,k) \, \rightarrow \,
\exists b\, \forall i<n\, \exists k \leq b\, \phi(i,k)]
\]
for each formula $\phi$ in $\Gamma$ such that $b$ is not free in
$\phi$. Note that $\phi$ is allowed to have parameters. The system
$\mathsf{RCA}_0 + \mathsf{B}\Sigma^0_2$, which is strictly
intermediate between $\mathsf{RCA}_0$ and $\mathsf{RCA}_0 +
\mathsf{I}\Sigma^0_2$, has been particularly prominent in reverse
mathematics. (In most cases, it is actually $\mathsf{B}\Pi^0_1$ that
is used, but $\mathsf{B}\Pi^0_1$ and $\mathsf{B}\Sigma^0_2$ are easily
seen to be equivalent over $\mathsf{RCA}_0$.) For example,
Hirst~\cite{Hirst} showed that $\textsf{RT}^1_{<\infty}$ is equivalent
to $\mathsf{B}\Sigma^0_2$ over $\mathsf{RCA}_0$.

In Section~\ref{compsec}, we will consider computable winning
strategies and the notion of generalized Weihrauch reducibility over
possibly nonstandard models. There is an intriguing connection here
with analogs of $\mathsf{RCA}_0$ for intuitionistic logic, first noted
in work of Kuyper~\cite{Kuyper}. We will comment on this connection
briefly in that section, but leave further work in this direction to a
follow-up paper. In Section~\ref{singlesec} we will consider
single-instance reducibilities such as computable and Weihrauch
reducibility in this context. Our results throughout will apply not
only to $\mathsf{RCA}_0$ but also to other systems at the level of
computable mathematics, including extensions of $\mathsf{RCA}_0$ by
first-order principles, such as $\mathsf{RCA}_0+\mathsf{I}\Sigma^0_n$
or $\mathsf{RCA}_0+\mathsf{B}\Sigma^0_n$, and also restrictions such
as $\mathsf{RCA}_0^*$, which roughly speaking is $\mathsf{RCA}_0$ with
$\Sigma^0_1$-induction replaced by $\Sigma^0_0$-induction.

In Sections~\ref{exsec} and~\ref{exsec2}, we will undertake a case
study in the analysis of mathematical principles under Weihrauch and
generalized Weihrauch reducibility over possibly nonstandard models,
by considering several principles that are equivalent over
$\mathsf{RCA}_0$ to $\Sigma^0_2$-bounding. We will see how this
framework allows us to uncover some hitherto hidden differences
between quite similar principles.

\section{Reduction games and provability}
\label{classicalsec}

In this section, we generalize Definition~\ref{generalgames} from
$\mathsf{RCA}_0$ to other axiom systems $\Gamma$, modify it as
described above, and prove a more general version of
Proposition~\ref{hjprop}. Of course, we cannot in general require
Player 1's moves to result in models of $\Gamma$, since it might be
the case that no structure of the form $M[X_0,\ldots,X_{n-1}]$ is a
model of $\Gamma$. However, we can require that Player 1 never make it
impossible for the model built by its moves to be extendable to a
model of $\Gamma$. Say that an $L_2$-structure $\mathcal M$ is
\emph{consistent with $\Gamma$} if it is contained in a model
$\mathcal N$ of $\Gamma$ with the same first-order part. (Note that if
$\mathcal M$ is countable, then we can require $\mathcal N$ to be
countable as well without changing the notion.)

The systems $\Gamma$ for which we will prove that winning strategies
for the following game correspond to provability over $\Gamma$ will
actually have the property that every structure consistent with
$\Gamma$ is in fact a model of $\Gamma$. The reason we give the
definition in the more general setting is that, when analyzing the
provability of $\mathsf{Q} \rightarrow \mathsf{P}$ in $\Gamma$, we
will also want to consider games over $\Gamma + \mathsf{Q}$. We will
see that doing so makes no difference in the case of general winning
strategies, but does in the case of computable winning strategies.

\begin{defn}
\label{redgamedef}
Let $\Gamma$ be a set of $L_2$-formulas and let $\mathsf{P}$ and
$\mathsf{Q}$ be $\Pi^1_2$-problems. The \emph{$\Gamma$-reduction game}
$G^{\Gamma}(\mathsf{Q} \rightarrow \mathsf{P})$ is a two-player game
played according to the following rules.
\begin{enumerate}[\rm (1)]

\item If at any point a player cannot make a move, the opponent wins.

\item If one of the players wins, the game ends.

\item On the first move, Player 1 plays a countable $L_1$-structure
$M$ and an $M$-instance $X_0$ of $\mathsf{P}$ such that $M[X_0]$ is
consistent with $\Gamma$. Then Player 2 either plays a solution to $X_0$
in $M[X_0]$ and wins, or plays an $M$-instance $Y_1$ of $\mathsf{Q}$
in $M[X_0]$.

\item For $n > 1$, on the $n$th move, Player 1 plays a solution
$X_{n-1}$ to the instance $Y_{n-1}$ of $\mathsf{Q}$ such that
$M[X_0,\ldots,X_{n-1}]$ is consistent with $\Gamma$. Then Player 2
either plays a solution to $X_0$ in $M[X_0,\ldots,X_{n-1}]$ and wins,
or plays an $M$-instance $Y_n$ of $\mathsf{Q}$ in
$M[X_0,\ldots,X_{n-1}]$.

\item If the game never ends then Player 1 wins.

\end{enumerate}
\end{defn}

We modify this game as follows.

\begin{defn}
Let $\Gamma$ be a set of $L_2$-formulas consistent with
$\Delta^0_1$-{\penalty0\hskip0pt\relax}comprehension, and let
$\mathsf{P}$ and $\mathsf{Q}$ be $\Pi^1_2$-problems. The
\emph{modified $\Gamma$-reduction game}
$\widehat{G}^{\Gamma}(\mathsf{Q} \rightarrow \mathsf{P})$ is a
two-player game played according to the following rules.
\begin{enumerate}[\rm (1)]

\item If at any point a player cannot make a move, the opponent wins.

\item If one of the players wins, the game ends.

\item On the first move, Player 1 plays a model $(M,S)$ of $\Gamma$
such that $M$ is countable and $S$ is closed under
$\Delta^0_1$-comprehension, and an $M$-instance $X_0$ of
$\mathsf{P}$ in $S$. Then Player 2 either plays a solution to $X_0$
in $M[X_0]$ and wins, or plays an $M$-instance $Y_1$ of $\mathsf{Q}$
in $M[X_0]$.

\item For $n > 1$, on the $n$th move, Player 1 plays a solution
$X_{n-1}$ to the instance $Y_{n-1}$ of $\mathsf{Q}$ in $S$. Then
Player 2 either plays a solution to $X_0$ in $M[X_0,\ldots,X_{n-1}]$
and wins, or plays an $M$-instance $Y_n$ of $\mathsf{Q}$ in
$M[X_0,\ldots,X_{n-1}]$.

\item If the game never ends then Player 1 wins.

\end{enumerate}
\end{defn}

If $\Gamma$ is consistent with $\Delta^0_1$-comprehension and $\Gamma
\nvdash \mathsf{Q} \rightarrow \mathsf{P}$, then Player 1 has winning
strategies in both of these games (as we will see in the second part
of the proof of Proposition~\ref{auxprop} below), but we cannot hope
in general that the same is the case for Player 2 if $\Gamma \vdash
\mathsf{Q} \rightarrow \mathsf{P}$, because of that player's
restriction to playing computably. However, if $\Gamma$ is
sufficiently well-behaved, then this is no longer an obstacle,
and we can obtain a generalization of Proposition~\ref{hjprop} with
essentially the same proof. The key property here is that all axioms
of $\Gamma$ other than $\Delta^0_1$-comprehension be $\Pi^1_1$. Of
course, this property holds of $\mathsf{RCA}_0$, as well as
commonly-studied first-order extensions such as $\mathsf{RCA}_0 +
\mathsf{I}\Sigma^0_n$ and $\mathsf{RCA}_0 + \mathsf{B}\Sigma^0_n$, and
restrictions such as $\mathsf{RCA}_0^*$.

In the proof, we will actually use the following properties, but it is
not difficult to show that they are equivalent to saying that $\Gamma$
is a consistent set of $L_2$-formulas consisting of
$\Delta^0_1$-comprehension together with a set of $\Pi^1_1$ formulas.
\begin{enumerate}

\item $\Gamma$ is a consistent set of $L_2$-formulas that
includes all instances of $\Delta^0_1$-comprehension.

\item If an $L_2$-structure is closed under $\Delta^0_1$-definability
and is consistent with $\Gamma$, then it is a model of $\Gamma$.

\item For every countable $L_1$-structure $M$ and
$X_0,X_1,\ldots \subseteq |M|$, if each $M[X_0,\ldots,X_n]$ is a
model of $\Gamma$, then so is their union $M[X_0,X_1,\ldots]$.

\end{enumerate}
The following simple but important result follows from these
properties.

\begin{lem}
\label{lem:P2canplay}
Let $\Gamma$ be a consistent extension of $\Delta^0_1$-comprehension
by $\Pi^1_1$ formulas. Let $\mathsf{P}$ and $\mathsf{Q}$ be
$\Pi^1_2$-problems. Let $M$ be an $L_1$-structure and $X_0,\ldots,X_n
\subseteq |M|$ be sets set such that $M[X_0,\ldots,X_n]$ is consistent
with $\Gamma$. If $\Gamma \vdash \mathsf{Q} \to \mathsf{P}$, then
either every instance of $\mathsf{P}$ in $M[X_0,\ldots,X_n]$ has a
solution in $M[X_0,\ldots,X_n]$, or else $\mathsf{Q}$ has an instance
in $M[X_0,\ldots,X_n]$.
\end{lem}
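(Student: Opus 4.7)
The plan is to argue contrapositively within the structure $\mathcal{M} \defeq M[X_0, \ldots, X_n]$ itself: if $\mathsf{Q}$ has no instance in $\mathcal{M}$, then I want $\mathcal{M}$ to model $\mathsf{P}$, so that every $\mathsf{P}$-instance in $\mathcal{M}$ has a solution in $\mathcal{M}$. The strategy to achieve this is to show $\mathcal{M} \vDash \Gamma$ directly (not merely consistency with $\Gamma$) and then read off $\mathsf{P}$ from $\Gamma \vdash \mathsf{Q} \to \mathsf{P}$ after checking that $\mathsf{Q}$ holds vacuously in $\mathcal{M}$.

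First I would verify that $\mathcal{M}$ is closed under $\Delta^0_1$-definability. This is essentially immediate from the definition of $M[X_0, \ldots, X_n]$, since a $\Delta^0_1$-definition with parameters from its second-order part can be unfolded into a $\Delta^0_1$-definition using only the parameters $X_0, \ldots, X_n$ (composition of $\Delta^0_1$-definitions is $\Delta^0_1$). Combined with the hypothesis that $\mathcal{M}$ is consistent with $\Gamma$, property (ii) of the itemized list just preceding the lemma then yields $\mathcal{M} \vDash \Gamma$.

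Next, assume $\mathsf{Q}$ has no instance in $\mathcal{M}$. Writing $\mathsf{Q} \equiv \forall X\,[\Theta_{\mathsf{Q}}(X) \to \exists Y\, \Psi_{\mathsf{Q}}(X,Y)]$ with $\Theta_{\mathsf{Q}}$ arithmetic, whether $\Theta_{\mathsf{Q}}(X)$ holds depends only on $M$ and $X$, so the assumption says that for no $X$ in the second-order part of $\mathcal{M}$ does $\Theta_{\mathsf{Q}}(X)$ hold; hence $\mathsf{Q}$ is vacuously true in $\mathcal{M}$. Combining this with $\mathcal{M} \vDash \Gamma$ and $\Gamma \vdash \mathsf{Q} \to \mathsf{P}$ gives $\mathcal{M} \vDash \mathsf{P}$, which is exactly the statement that every instance of $\mathsf{P}$ in $\mathcal{M}$ has a solution in $\mathcal{M}$.

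The only step that takes any thought is the appeal to property (ii), which is why the itemized list of properties of $\Gamma$ was set up in the first place; the rest is bookkeeping about vacuous satisfaction and absoluteness of arithmetic formulas. No compactness or ultrafilter argument is needed here: the lemma is essentially a packaging of the observation that $M[X_0, \ldots, X_n]$ is itself a model of $\Gamma$ whenever it is consistent with $\Gamma$, together with the trivial observation that a $\Pi^1_2$-problem with no instances is vacuously true.
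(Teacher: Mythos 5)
Your proposal is correct and follows essentially the same route as the paper's own proof: use property (2) of the itemized list (closure under $\Delta^0_1$-definability plus consistency with $\Gamma$ forces $M[X_0,\ldots,X_n] \vDash \Gamma$), observe that $\mathsf{Q}$ holds vacuously if it has no instance, and conclude $\mathsf{P}$ from $\Gamma \vdash \mathsf{Q} \to \mathsf{P}$. The extra detail you give on why $M[X_0,\ldots,X_n]$ is closed under $\Delta^0_1$-definability is a point the paper takes for granted, but the argument is the same.
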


\begin{proof}
Fix $M$ and $X_0,\ldots,X_n$. Since the $L_2$-structure
$M[X_0,\ldots,X_n]$ is closed under $\Delta^0_1$-comprehension it is
in fact a model of $\Gamma$, as noted above. If $\mathsf{Q}$ has no
instance in $M[X_0,\ldots,X_n]$, then $M[X_0,\ldots,X_n]$ trivially
satisfies $\mathsf{Q}$. Hence, by assumption, $M[X_0,\ldots,X_n]$ also
satisfies $\mathsf{P}$. So every instance of $\mathsf{P}$ in
$M[X_0,\ldots,X_n]$ has a solution in $M[X_0,\ldots,X_n]$.
\end{proof}

Later on, when we prove a generalization of Theorem~\ref{rcathm}, we
will also need to assume that $\Gamma$ is strong enough to prove the
existence of a universal $\Sigma^0_1$ formula, but of course that
holds of all systems we normally study in reverse mathematics.

We should also expect $\Gamma$ and $\Gamma + \mathsf{Q}$ to behave
similarly here, since there is no difference between saying that
$\Gamma \vdash \mathsf{Q} \rightarrow \mathsf{P}$ and saying that
$\Gamma + \mathsf{Q} \vdash \mathsf{Q} \rightarrow \mathsf{P}$. This
fact will be of interest below when we consider computable winning
strategies.

Proposition~\ref{hjprop} can be generalized as follows. Notice that
of the four games $G^{\Gamma}(\mathsf{Q} \rightarrow \mathsf{P})$,
$G^{\Gamma + \mathsf{Q}}(\mathsf{Q} \rightarrow \mathsf{P})$,
$\widehat{G}^{\Gamma}(\mathsf{Q} \rightarrow \mathsf{P})$, and
$\widehat{G}^{\Gamma + \mathsf{Q}}(\mathsf{Q} \rightarrow
\mathsf{P})$, the first is the hardest one for Player 2 to win, while
the last is the hardest one for Player 1 to win.

\begin{prop}
\label{auxprop}
Let $\Gamma$ be a consistent extension of $\Delta^0_1$-comprehension
by $\Pi^1_1$ formulas. Let $\mathsf{P}$ and $\mathsf{Q}$ be
$\Pi^1_2$-problems. If $\Gamma \vdash \mathsf{Q} \rightarrow
\mathsf{P}$ then Player 2 has a winning strategy for
$G^{\Gamma}(\mathsf{Q} \rightarrow \mathsf{P})$ (and hence for each of
the three other games above). Otherwise, Player 1
has a winning strategy for $\widehat{G}^{\Gamma +
\mathsf{Q}}(\mathsf{Q} \rightarrow \mathsf{P})$ (and hence for each of
the three other games above).
\end{prop}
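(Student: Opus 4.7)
The plan is to prove each direction separately, using the monotonicity the authors have already pointed out: it suffices to give Player~2 a winning strategy in the hardest-for-her game $G^{\Gamma}(\mathsf{Q}\to\mathsf{P})$, and Player~1 a winning strategy in the hardest-for-him game $\widehat{G}^{\Gamma+\mathsf{Q}}(\mathsf{Q}\to\mathsf{P})$, since any strategy in the harder game is immediately a strategy in the easier ones.

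For the first direction, assuming $\Gamma\vdash\mathsf{Q}\to\mathsf{P}$, the plan is to have Player~2 dovetail through the potential $\mathsf{Q}$-instances. Fix in advance an enumeration of all pairs $(\phi_0^{(i)},\phi_1^{(i)})$ of $\Sigma^0_1$-formulas with finitely many set-parameter slots. At stage $n$, given $M$ and $X_0,\ldots,X_{n-1}$, Player~2 first checks (noncomputably) whether $M[X_0,\ldots,X_{n-1}]$ contains a solution to $X_0$; if so, she plays it and wins. Otherwise, by Lemma~\ref{lem:P2canplay} together with property~(2), which tells us that $M[X_0,\ldots,X_{n-1}]$ is in fact a model of $\Gamma$, there is a $\Delta^0_1$-definable $M$-instance of $\mathsf{Q}$ over $X_0,\ldots,X_{n-1}$, and Player~2 plays the next such instance from the dovetailing not yet played. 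The key claim is that infinite play is impossible: by property~(3), the union $\mathcal{N}=M[X_0,X_1,\ldots]$ is a model of $\Gamma$; by the dovetailing, every $M$-instance of $\mathsf{Q}$ in $\mathcal{N}$ is played as some $Y_m$ and hence solved by Player~1 inside $\mathcal{N}$, so $\mathcal{N}\vDash\mathsf{Q}$; hence $\mathcal{N}\vDash\mathsf{P}$, producing a solution $Y$ to $X_0$ that lies in some $M[X_0,\ldots,X_n]$, contradicting that Player~2 did not win at that stage.

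For the second direction, assuming $\Gamma\nvdash\mathsf{Q}\to\mathsf{P}$, downward L\"owenheim--Skolem yields a countable $L_2$-structure $\mathcal{M}=(M,S)$ modelling $\Gamma+\mathsf{Q}+\neg\mathsf{P}$, so there is some $M$-instance $X_0\in S$ of $\mathsf{P}$ with no solution in $S$. Player~1's strategy in $\widehat{G}^{\Gamma+\mathsf{Q}}(\mathsf{Q}\to\mathsf{P})$ is to open with $\mathcal{M}$ and $X_0$ and, for each subsequent $Y_n$ offered by Player~2, to play any solution $X_n\in S$. Such a solution exists because $Y_n\in M[X_0,\ldots,X_{n-1}]\subseteq S$ (since $S$ is closed under $\Delta^0_1$-comprehension as a model of $\Gamma$) and $\mathcal{M}\vDash\mathsf{Q}$. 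Any purported solution to $X_0$ that Player~2 submits must likewise lie in $S$ and so cannot be a solution; thus Player~2 never wins, and by rules~(1) and~(5) Player~1 does.

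The main obstacle is the limit-stage analysis in the first direction: one has to know that $\mathcal{N}$ is a model of $\Gamma$, which is the content of property~(3) and rests on the fact that $\Gamma$'s non-$\Delta^0_1$-comprehension axioms are $\Pi^1_1$ and so are preserved under increasing unions of second-order parts. Once this has been isolated, the remainder is bookkeeping for the dovetailing together with the monotonicity reduction to the two extremal games.
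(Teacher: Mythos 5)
Your proposal is correct and follows essentially the same route as the paper's proof: in the first direction Player~2 dovetails through all $\mathsf{Q}$-instances appearing in the structures $M[X_0,\ldots,X_{n-1}]$ (legality of moves coming from Lemma~\ref{lem:P2canplay}), and the contradiction at the limit uses exactly the paper's argument that the union $M[X_0,X_1,\ldots]$ is a model of $\Gamma + \mathsf{Q}$ and hence of $\mathsf{P}$; the second direction is the paper's argument verbatim, with Player~1 confining play to a countable model of $\Gamma + \mathsf{Q} + \neg\mathsf{P}$. The only cosmetic difference is that you fix the enumeration of candidate $\Sigma^0_1$-definitions in advance rather than re-enumerating the instances of $\mathsf{Q}$ at each stage, which amounts to the same bookkeeping.
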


\begin{proof}
If $\Gamma \vdash \mathsf{Q} \rightarrow \mathsf{P}$ then Player 2 can
play according to the following strategy. Let $M$ be the
$L_1$-structure played by Player 1 on its first move.  At the $n$th
move, if Player 2 has a legal winning move, Player 2 makes that
move. Otherwise, it lets $Y_{n,0},Y_{n,1},\ldots$ be all $M$-instances
of $\mathsf{Q}$ in $M[X_0,\ldots,X_{n-1}]$, where $X_0,\ldots,X_{n-1}$
are Player 1's first $n$ moves. For the least pair $\langle m,i
\rangle$ with $m \leq n$ for which Player 2 has not yet acted, it then
acts by playing $Y_{m,i}$ (to which Player 1 must reply with a
solution to $Y_{m,i}$). Note that Player 2 always has some legal move,
by Lemma \ref{lem:P2canplay}. Suppose Player 2 never has a winning
move, and Player 1 never fails to have a legal move. By our
assumptions on $\Gamma$, each $M[X_0,\ldots,X_{n-1}]$ is a model of
$\Gamma$, and hence so is their union $M[X_0,X_1,\ldots]$. But Player
2's strategy ensures that this structure is also a model of
$\mathsf{Q}$, so it must also be a model of $\mathsf{P}$, and hence
must contain a solution to $X_0$. This solution is in
$M[X_0,\ldots,X_{n-1}]$ for some $n$, which gives Player 2 a winning
$n$th move.

If $\Gamma \nvdash \mathsf{Q} \rightarrow \mathsf{P}$ then let $(M,S)$
be a model of $\Gamma + \mathsf{Q} + \neg \mathsf{P}$ and let $X_0$ be
an $M$-instance of $\mathsf{P}$ in $S$ with no solution in $S$. Since
$(M,S)$ is a model of $\Gamma$, it is closed under
$\Delta^0_1$-definability, so as long as Player 1's moves stay inside
$S$, so must Player 2's moves. Furthermore, the fact that $(M,S)$ is a
model of $\mathsf{Q}$ implies that, as long as Player 2's moves stay
inside $S$, Player 1 will always be able to reply with moves that stay
inside $S$. So Player 1 can simply begin by playing $(M,S)$ and $X_0$,
and then keep playing elements of $S$, which ensures that the game
never ends (unless Player 2 cannot make its first move, in which case
it loses immediately).
\end{proof}

\begin{rem}
We can extend the above framework beyond extensions of
$\Delta^0_1$-comprehension by $\Pi^1_1$ formulas. Let us consider
$\mathsf{ACA}_0$, for instance. If we redefine $M[X_0,\ldots,X_{n-1}]$
by replacing $\Delta^0_1$-definability by arithmetic definability,
then use this new definition in the definitions of the
$\Gamma$-reduction game and the modified $\Gamma$-reduction game, then
Proposition~\ref{auxprop} carries through essentially unchanged.

There is nothing particularly special about this
$\Gamma=\mathsf{ACA}_0$ case. All we need is the existence of a
smallest model $M[X_0,\ldots,X_{n-1}]$ of $\Gamma$ with first-order
part $M$ containing $X_0,\ldots,X_{n-1} \subseteq |M|$ (if there is
any such model at all), and the requirement that then $\bigcup_n
M[X_0,\ldots,X_{n-1}]$ is also a model of $\Gamma$ (which will happen
if $\Gamma$ is $\Pi^1_2$-axiomatizable). For systems $\Gamma$ that do
not have such minimal models, such as $\mathsf{WKL}_0$, we can still
extend these ideas by redefining our games in a way that does not
affect our results when applied to systems that do have minimal
models. For example,  $\widehat{G}^{\Gamma}(\mathsf{Q} \rightarrow
\mathsf{P})$ can now be played according to the following rules.
\begin{enumerate}[\rm (1)]

\item If at any point a player cannot make a move, the opponent wins.

\item If one of the players wins, the game ends.

\item On the first move, Player 1 plays a model $(M,S)$ of $\Gamma$
with $M$ countable, an $M$-instance $X_0$ of $\mathsf{P}$ in $S$, and
a submodel $(M,S_0)$ of $(M,S)$ containing $X_0$. Then Player 2
either plays a solution to $X_0$ in $(M,S_0)$ and wins, or plays an
$M$-instance $Y_1$ of $\mathsf{Q}$ in $(M,S_0)$.

\item For $n > 1$, on the $n$th move, Player 1 plays a solution
$X_{n-1}$ to the instance $Y_{n-1}$ of $\mathsf{Q}$ in $S$ and a
submodel $(M,S_{n-1})$ of $(M,S)$ containing $X_{n-1}$. Then Player
2 either plays a solution to $X_0$ in $(M,S_{n-1})$ and wins, or
plays an $M$-instance $Y_n$ of $\mathsf{Q}$ in $(M,S_{n-1})$.

\item If the game never ends then Player 1 wins.

\end{enumerate}

Theorem~\ref{classicalmainthm} below remains true for
$\mathsf{ACA}_0$, for instance, since in Theorem~\ref{classicalauxthm}
we can replace the $e$th Turing functional by the $e$th arithmetical
functional. It is not clear how generally
Theorem~\ref{classicalmainthm} holds for other systems, but we will
not pursue this further generalization of our framework here.
\end{rem}

\section{Reduction games and compactness}
\label{compactnesssec}

As mentioned in the introduction, we can improve on
Proposition~\ref{auxprop} by showing that a certain kind of
compactness theorem holds, with the very mild extra assumption that
$\Gamma$ proves the existence of a universal $\Sigma^0_1$ formula,
i.e., that there is a $\Sigma^0_1$ formula $\theta(e,n,X)$ such that
for every $\Sigma^0_1$ formula $\phi(e,n,X)$, we have $\Gamma \vdash
\forall e\, \exists i\, \forall n\, \forall X\,(\theta(i,n,X)
\leftrightarrow \phi(e,n,X))$. In this case, we assume we have fixed
such a $\theta$ and a bijective pairing function $\langle \cdot,\cdot
\rangle$, and write $Y = \Phi_e^X$ to mean that for $e=\langle i,j
\rangle$, we have $\forall n\, [\theta(i,n,X) \leftrightarrow
\neg \theta(j,n,X)]$ and $\forall n\, [n \in Y \leftrightarrow
\theta(i,n,X)]$.

The following result, which we will prove in this section, has
Theorem~\ref{rcathm} as a special case.

\begin{thm}
\label{classicalmainthm}
Let $\Gamma$ be a consistent extension of $\Delta^0_1$-comprehension
by $\Pi^1_1$ formulas that proves the existence of a universal
$\Sigma^0_1$ formula. Let $\mathsf{P}$ and $\mathsf{Q}$ be
$\Pi^1_2$-problems. If $\Gamma \vdash \mathsf{Q} \rightarrow
\mathsf{P}$ then there is an $n$ such that Player 2 has a winning
strategy for $\widehat{G}^{\Gamma}(\mathsf{Q} \rightarrow \mathsf{P})$
(and hence for $\widehat{G}^{\Gamma +
\mathsf{Q}}(\mathsf{Q} \rightarrow \mathsf{P})$)
that ensures victory in at most $n$ many moves. Otherwise, Player 1
has a winning strategy for $\widehat{G}^{\Gamma +
\mathsf{Q}}(\mathsf{Q} \rightarrow \mathsf{P})$ (and hence for
$\widehat{G}^{\Gamma}(\mathsf{Q} \rightarrow \mathsf{P})$).
\end{thm}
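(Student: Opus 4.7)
The plan is to establish the theorem via the compactness theorem for first-order logic. The second clause (if $\Gamma \nvdash \mathsf{Q} \to \mathsf{P}$ then Player 1 wins $\widehat{G}^{\Gamma + \mathsf{Q}}(\mathsf{Q} \to \mathsf{P})$) follows immediately from Proposition~\ref{auxprop}, so the substantive work lies in the first clause.

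First I would introduce, for each $n \in \omega$, an arithmetic formula $\mathrm{Win}_n(X_0; X_1, \ldots, X_k)$ expressing ``Player 2 has a strategy winning in at most $n$ more moves from the history $(X_0, X_1, \ldots, X_k)$.'' The hypothesis that $\Gamma$ proves the existence of a universal $\Sigma^0_1$ formula permits writing this by recursion on $n$, with $\mathrm{Win}_0 \equiv \bot$ and
\[
\mathrm{Win}_{n+1} \equiv \exists e\,\bigl[\Psi_{\mathsf P}(X_0, \Phi_e^{X_0 \oplus \cdots \oplus X_k}) \vee \bigl(\Theta_{\mathsf Q}(\Phi_e^{X_0 \oplus \cdots \oplus X_k}) \wedge \forall X\,(\Psi_{\mathsf Q}(\Phi_e^{X_0 \oplus \cdots \oplus X_k}, X) \to \mathrm{Win}_n(X_0; X_1, \ldots, X_k, X))\bigr)\bigr].
\]
A straightforward induction on $n$ shows that $\Gamma + \mathsf{Q} + \mathrm{Win}_n(X_0) \vdash \exists Y\,\Psi_{\mathsf P}(X_0, Y)$, and moreover that if $\Gamma \vdash \forall X_0\,[\Theta_{\mathsf P}(X_0) \to \mathrm{Win}_n(X_0)]$ for some standard $n$, then Player 2 has a strategy for $\widehat{G}^{\Gamma}(\mathsf{Q} \to \mathsf{P})$ ensuring victory in at most $n$ moves: at each stage Player 2 extracts from $\mathrm{Win}_n$ a witnessing index $e$ and plays the corresponding $\Delta^0_1$-definable set.

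For the converse I argue by contradiction. Assume $\Gamma \vdash \mathsf{Q} \to \mathsf{P}$ while for every $n$ there is a countable model of $\Gamma$ containing an instance of $\mathsf{P}$ at which $\mathrm{Win}_n$ fails. Expand $L_2$ by a set constant $c_0$ and, for each finite sequence $\alpha = (e_1, \ldots, e_k)$ of natural numbers, a set constant $X^\alpha$, intended as Player 1's response at depth $k$ after Player 2 attempts the $\Sigma^0_1$-indices $e_1, \ldots, e_k$. Let $T$ consist of $\Gamma$, the sentence $\Theta_{\mathsf P}(c_0)$, axioms asserting that for each $\alpha = (e_1, \ldots, e_k)$ and each $i \leq k$, if $\Phi_{e_i}$ applied to $c_0 \oplus X^{(e_1)} \oplus \cdots \oplus X^{(e_1, \ldots, e_{i-1})}$ is an instance of $\mathsf Q$ then $X^{(e_1, \ldots, e_i)}$ is a solution to it, and for each $\alpha$ and each $e$, the axiom $\neg \Psi_{\mathsf P}(c_0, \Phi_e^{c_0 \oplus X^{(e_1)} \oplus \cdots \oplus X^\alpha})$. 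Finite satisfiability of $T$ reduces to the assumption: any finite subtheory mentions sequences of length at most some $n$, and invoking $\neg \mathrm{Win}_{n+1}(d_0)$ in a witnessing model unwinds, via finite-game determinacy (valid despite infinite branching because the game tree has finite depth), into exactly the interpretations of the $X^\alpha$'s for $|\alpha| \leq n$ that satisfy the subtheory.

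Compactness then yields a model $(M^*, S^*) \models T$. The final step is a Henkin-style diagonalization: inductively choose indices $e_1, e_2, \ldots \in \omega$ such that every $\Sigma^0_1$-index $e$ for which $\Phi_e^{c_0 \oplus X^{(e_1)} \oplus \cdots \oplus X^{(e_1, \ldots, e_k)}}$ is an instance of $\mathsf Q$ in $(M^*, S^*)$ appears eventually as some $e_{k+1}$, by dovetailing over pairs $(k, e)$. Let $S_0 \subseteq S^*$ be the $\Delta^0_1$-closure of $\{c_0\} \cup \{X^{(e_1, \ldots, e_k)} : k \geq 1\}$. Then $(M^*, S_0)$ is closed under $\Delta^0_1$-comprehension; every instance of $\mathsf Q$ in $S_0$ is solved in $S_0$ by the choice of the $e_i$'s; the $\neg \Psi_{\mathsf P}$ axioms along this branch guarantee no element of $S_0$ is a solution to $c_0$; and the non-comprehension axioms of $\Gamma$, being $\Pi^1_1$, are preserved when shrinking the second-order part. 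Thus $(M^*, S_0) \models \Gamma + \mathsf{Q} + \neg \mathsf{P}$, contradicting $\Gamma \vdash \mathsf{Q} \to \mathsf{P}$. I expect the main obstacle to be this final diagonalization: arranging a single branch to simultaneously witness every $\mathsf Q$-instance definable from it, while the axioms of $T$ force the branch to avoid providing any solution to $c_0$.
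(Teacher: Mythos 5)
Your overall architecture is the right one and the first half is fine: the recursive formulas $\mathrm{Win}_n$ play the role of the paper's $\Delta_n$ (from Theorem~\ref{classicalauxthm}), the extraction of a bounded-length winning strategy for Player 2 from $\Gamma \vdash \forall X_0\,[\Theta_{\mathsf P}(X_0) \to \mathrm{Win}_n(X_0)]$ matches the paper's argument, and reducing the second clause to Proposition~\ref{auxprop} is exactly what the paper does. The problem is in the compactness step and the final diagonalization, and it is the obstacle you yourself flagged. Your new constants $X^\alpha$ are indexed by \emph{standard} finite sequences, and your axiom schemata (``$X^{(e_1,\ldots,e_i)}$ solves $\Phi_{e_i}^{\cdots}$'' and ``$\neg\Psi_{\mathsf P}(c_0,\Phi_e^{\cdots})$'') range over standard indices $e$ only. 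But in the compactness model $(M^*,S^*)$ the first-order part is nonstandard, and $\Delta^0_1$-definability permits first-order parameters from $|M^*|$; so the closure $S_0$ of your branch contains sets $\Phi_e^{c_0\oplus\cdots}$ whose only indices $e$ are nonstandard. Consequently: (i) a $\mathsf Q$-instance in $S_0$ with no standard index is never reached by your dovetailing over $e\in\omega$, and nothing in $T$ supplies it a solution, so $(M^*,S_0)\vDash\mathsf Q$ fails; and (ii) the schema $\neg\Psi_{\mathsf P}(c_0,\Phi_e^{\cdots})$ for standard $e$ does not rule out a solution to $c_0$ in $S_0$ carrying only a nonstandard index, so $(M^*,S_0)\vDash\neg\mathsf P$ is also unjustified. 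The contradiction therefore does not materialize.

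The paper's proof of Theorem~\ref{classicalauxthm} is engineered precisely to dodge this. Instead of countably many constants it adds a single function symbol $f$ from first-order to second-order objects, so that Player 1's response $f(\langle e_0,\ldots,e_j\rangle)$ is defined for \emph{every} tuple of indices in the model, nonstandard ones included, and the sentences $\Psi_k$ quantify $\forall e_0,\ldots,e_k$ \emph{inside} the model rather than schematically. More importantly, it never attempts to build a model of $\Gamma+\mathsf Q+\neg\mathsf P$: from the compactness model it reads off a winning strategy for Player 1 in $G^{\Gamma}(\mathsf Q\to\mathsf P)$ and contradicts Proposition~\ref{auxprop}. In a run of the game Player 1 need only answer the one instance Player 2 actually plays at each round --- which $f$ handles whatever its index --- so there is no need to close a single branch under responses to all $\mathsf Q$-instances definable from it. To repair your argument, replace the constants by such a function symbol, internalize the quantification over $e$, and aim the contradiction at the existence of Player 2's winning strategy rather than at the construction of a countermodel; at that point you have essentially reconstructed the paper's proof.
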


Notice that if the formulas added to $\Delta^0_1$-comprehension to
obtain $\Gamma$ are true over the standard natural numbers, then a
winning strategy for Player 2 for $\widehat{G}^{\Gamma}(\mathsf{Q}
\rightarrow \mathsf{P})$ that ensures victory in at most $n$ many
moves also yields a winning strategy for Player 2 for $G(\mathsf{Q}
\rightarrow \mathsf{P})$ that ensures victory in at most $n$ many
moves, since a run of the latter game is a special case of a run of
the former game in which Player 1 begins by playing the model
$(\omega,\mathcal P(\omega))$. Thus it is not a coincidence that all
the examples we have of situations in which $G(\mathsf{Q} \rightarrow
\mathsf{P})$ can be won by Player 2, but not in a number of moves
bounded ahead of time, are ones in which $\mathsf{P}
\leq\sub{$\omega$} \mathsf{Q}$ but $\mathsf{RCA}_0 \nvdash \mathsf{Q}
\rightarrow \mathsf{P}$. In fact, the following stronger fact holds,
where, as defined in~\cite{HirJoc}, $\mathsf{P}
\leq\sub{$\omega$}\sup{$n$} \mathsf{Q}$ means that Player 2 has a
winning strategy for $G(\mathsf{Q} \rightarrow \mathsf{P})$ that
ensures victory in at most $n+1$ many moves.

\begin{cor}
Let $\Gamma$ consist of $\mathsf{RCA}_0$ together with all $\Pi^1_1$
formulas true over the natural numbers. If $\mathsf{P}
\nleq\sub{$\omega$}\sup{$n$} \mathsf{Q}$ for all $n$, then $\Gamma
\nvdash \mathsf{Q} \rightarrow \mathsf{P}$.
\end{cor}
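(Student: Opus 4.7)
The plan is to prove the contrapositive: assume $\Gamma \vdash \mathsf{Q} \rightarrow \mathsf{P}$ and deduce that there is some $n$ with $\mathsf{P} \leq\sub{$\omega$}\sup{$n$} \mathsf{Q}$. The key idea is to apply Theorem~\ref{classicalmainthm} to $\Gamma$ and then specialize a winning strategy for $\widehat{G}^{\Gamma}(\mathsf{Q} \rightarrow \mathsf{P})$ to the standard $\omega$-model, as the paragraph preceding the corollary already hints.

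First, I would verify that $\Gamma$ meets the hypotheses of Theorem~\ref{classicalmainthm}. By definition $\Gamma$ extends $\mathsf{RCA}_0$ (and hence $\Delta^0_1$-comprehension) by $\Pi^1_1$ formulas; it is consistent because $(\omega,\mathcal{P}(\omega))$ satisfies $\mathsf{RCA}_0$ as well as every $\Pi^1_1$ formula true over the natural numbers; and $\mathsf{RCA}_0$ proves the existence of a universal $\Sigma^0_1$ formula. Therefore, Theorem~\ref{classicalmainthm} yields an $n$ such that Player 2 has a winning strategy $\sigma$ for $\widehat{G}^{\Gamma}(\mathsf{Q} \rightarrow \mathsf{P})$ that ensures victory in at most $n$ moves.

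Next I would argue that $\sigma$ transfers to the standard game $G(\mathsf{Q} \rightarrow \mathsf{P})$ without any increase in the move bound. Consider the $L_2$-structure $(\omega,\mathcal{P}(\omega))$: its first-order part is countable, its second-order part is closed under $\Delta^0_1$-comprehension, and by the choice of $\Gamma$ it is a model of $\Gamma$. Thus it is a legal opening position for Player 1 in $\widehat{G}^{\Gamma}(\mathsf{Q} \rightarrow \mathsf{P})$. Moreover, for sets $X_0,\ldots,X_{n-1} \subseteq \omega$, the structure $\omega[X_0,\ldots,X_{n-1}]$ is precisely the collection of sets Turing-reducible to $X_0 \oplus \cdots \oplus X_{n-1}$, since $\Delta^0_1$-definability over the standard model coincides with ordinary computability. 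Consequently, a run of $G(\mathsf{Q} \rightarrow \mathsf{P})$ is exactly a run of $\widehat{G}^{\Gamma}(\mathsf{Q} \rightarrow \mathsf{P})$ in which Player 1 opens by playing $(\omega,\mathcal{P}(\omega))$, and the legality conditions for both players' subsequent moves agree.

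Finally, I would apply $\sigma$ in this special setting: Player 2 follows $\sigma$ against Player 1's play in $G(\mathsf{Q} \rightarrow \mathsf{P})$ and thereby wins in at most $n$ moves, which (after adjusting for the indexing convention that $\mathsf{P} \leq\sub{$\omega$}\sup{$m$} \mathsf{Q}$ corresponds to victory in at most $m+1$ moves) gives $\mathsf{P} \leq\sub{$\omega$}\sup{$n$} \mathsf{Q}$, contradicting the hypothesis. There is no real obstacle here beyond Theorem~\ref{classicalmainthm} itself; the only bookkeeping is the verification that $(\omega,\mathcal{P}(\omega)) \models \Gamma$ and the identification of $\Delta^0_1$-definability over $\omega$ with Turing computability, both of which are immediate.
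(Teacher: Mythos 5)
Your proposal is correct and follows essentially the same route as the paper: the corollary is obtained there precisely by applying Theorem~\ref{classicalmainthm} to this $\Gamma$ (which is consistent since $(\omega,\mathcal P(\omega))$ models it) and observing, as in the paragraph preceding the corollary, that a run of $G(\mathsf{Q}\rightarrow\mathsf{P})$ is the special case of a run of $\widehat{G}^{\Gamma}(\mathsf{Q}\rightarrow\mathsf{P})$ in which Player 1 opens with $(\omega,\mathcal P(\omega))$. Your bookkeeping on the move-count convention is also fine, since a strategy winning in at most $n$ moves certainly wins in at most $n+1$ moves, so the stated conclusion $\mathsf{P}\leq\sub{$\omega$}\sup{$n$}\mathsf{Q}$ follows and contradicts the hypothesis.
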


Notice that the $\Gamma$ in this corollary includes full arithmetical
induction. An interesting example of the application of this corollary
is to take $\mathsf{Q}$ to be $\mathsf{RT}^2_2$ and $\mathsf{P}$ to be
$\mathsf{RT}^2_{<\infty}$. Cholak, Jockusch,
and Slaman~\cite{CJS} showed that $\mathsf{RCA}_0 \nvdash
\mathsf{RT}^2_k \rightarrow \mathsf{RT}^2_{<\infty}$ for all $k$, but the proof
relies on a difference between the first-order parts of these two
principles, and hence does not work if we add arithmetical induction
to $\mathsf{RCA}_0$. (Note that, with full induction,
$\mathsf{RT}^2_{<\infty}$ does in fact follow from $\mathsf{RT}^2_2$.)
Patey~\cite{Patey2} showed that $\mathsf{RT}^2_{<\infty}
\nleq\sub{$\omega$}\sup{$n$} \mathsf{RT}^2_k$ for all $n$ and $k$, so
we have the following.

\begin{cor}
\label{rt2cor}
Let $\Gamma$ consist of $\mathsf{RCA}_0$ together with all
$\Pi^1_1$ formulas true over the natural numbers. Then $\Gamma \nvdash
\mathsf{RT}^2_k \rightarrow \mathsf{RT}^2_{<\infty}$ for all $k$.
\end{cor}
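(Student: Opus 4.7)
The plan is to read off the conclusion directly from the preceding corollary together with Patey's theorem \cite{Patey2}, which was cited in the paragraph just before the statement. Fix $k \geq 2$ and set $\mathsf{P} \defeq \mathsf{RT}^2_{<\infty}$ and $\mathsf{Q} \defeq \mathsf{RT}^2_k$. Patey's result gives exactly that $\mathsf{P} \nleq\sub{$\omega$}\sup{$n$} \mathsf{Q}$ for every $n \in \omega$, which is precisely the hypothesis of the previous corollary. Applying that corollary yields $\Gamma \nvdash \mathsf{Q} \to \mathsf{P}$, i.e., $\Gamma \nvdash \mathsf{RT}^2_k \to \mathsf{RT}^2_{<\infty}$, which is the desired conclusion.

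Since the proof is a one-line bookkeeping step once the preceding corollary is in hand, there is no substantive technical obstacle. It is worth pointing out, however, why the full uniformity in Patey's result is essential: the hypothesis of the previous corollary demands that no fixed finite bound $n$ on the number of rounds suffice for Player 2 to win $G(\mathsf{Q} \rightarrow \mathsf{P})$, not merely that a single round (as in Weihrauch reducibility) or some one particular number of rounds fail. Correspondingly, when applying this corollary to separate other pairs of principles in the same way, the main task is always to locate (or establish) the separation in the literature in the uniform-in-$n$ form $\mathsf{P} \nleq\sub{$\omega$}\sup{$n$} \mathsf{Q}$, rather than in a weaker form such as $\mathsf{P} \nleq\sub{W} \mathsf{Q}$ or $\mathsf{P} \nleq\sub{c} \mathsf{Q}$.
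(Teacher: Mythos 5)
Your proposal is correct and is exactly the paper's own argument: the authors derive this corollary immediately from the preceding one by citing Patey's result that $\mathsf{RT}^2_{<\infty} \nleq\sub{$\omega$}\sup{$n$} \mathsf{RT}^2_k$ for all $n$ and $k$. Your added remark about why the uniform-in-$n$ form of the nonreduction is needed is accurate and consistent with the paper's discussion.
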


\noindent We learned from Yokoyama [personal communication] that he and
Slaman have recently noticed that this corollary can also be obtained by a
more direct model-theoretic argument, still using Patey's result.

The proof of Theorem~\ref{classicalmainthm} will use the following
result, which is of independent interest as a generalization of
Theorem~\ref{wangthm}.

\begin{thm}
\label{classicalauxthm}
Let $\Gamma$ be a consistent extension of $\Delta^0_1$-comprehension
by $\Pi^1_1$ formulas that proves the existence of a universal
$\Sigma^0_1$ formula.  Let $\mathsf{P}$ and $\mathsf{Q}$ be
$\Pi^1_2$-problems. For $n \in \omega$, let $\Theta_n(e_0, \ldots,
e_n,X_0, \ldots, X_n, Y_0, \ldots, Y_n)$ be a formula asserting that
\begin{multline*}
 \text{ if }X_0\text{ is a } \mathsf{P}\text{-instance then }(Y_0 = \Phi_{e_0}^{X_0} \wedge (\text{either }Y_0\text{ is a solution to }X_0\text{ or} \\
  (Y_0\text{ is a }\mathsf{Q}\text{-instance and if }X_1\text{ is a solution to }Y_0\text{ then }(Y_1 = \Phi_{e_1}^{X_0 \oplus X_1} \wedge \\(\text{either }Y_1\text{ is a solution to }X_0\text{ or} \\   (Y_1\text{ is a }\mathsf{Q}\text{-instance and if }X_2\text{ is a solution to }Y_1\text{ then }(Y_2 = \Phi_{e_2}^{X_0 \oplus X_1 \oplus X_2} \wedge \\(\text{either }Y_2\text{ is a solution to } X_0\text{ or }\ldots \\
                \vdots \\
                 \ldots (Y_n = \Phi_{e_n}^{X_0 \oplus \cdots \oplus
                   X_n} \wedge Y_n\text{ is a solution to
                 }X_0))\cdots),
\end{multline*}
and let $\Delta_n$ be 
\[
\forall X_0\, \exists e_0, Y_0\, \forall X_1\, \exists e_1,
Y_1 \cdots \forall X_n\, \exists e_n, Y_n\, \Theta_n(e_0, \ldots, e_n,X_0, \ldots, X_n,
Y_0, \ldots, Y_n).
\]
If $\Gamma \vdash \mathsf{Q} \to \mathsf{P}$, then
 there exists
an $n \in \omega$ such that $\Gamma \vdash \Delta_n$.
\end{thm}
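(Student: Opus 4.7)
The plan is to prove the contrapositive via first-order compactness: assuming $\Gamma \nvdash \Delta_n$ for every $n \in \omega$, I build a model of $\Gamma + \mathsf{Q} + \neg \mathsf{P}$, contradicting $\Gamma \vdash \mathsf{Q} \to \mathsf{P}$. Expand $L_2$ by adjoining a new set constant $X_\sigma$ for each $\sigma \in \omega^{<\omega}$; write $H_\sigma \defeq X_{\sigma\restriction 0} \oplus \cdots \oplus X_\sigma$, and let $\mathrm{Prop}(\sigma)$ denote the arithmetic formula $\bigwedge_{j<|\sigma|}$``$\Phi_{\sigma(j)}^{H_{\sigma\restriction j}}$ is a $\mathsf{Q}$-instance''. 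The key auxiliary theory $T$ consists of $\Gamma$, the statement that $X_\emptyset$ is a $\mathsf{P}$-instance, and for each $\sigma$ and $e$ the two conditional axioms $\alpha_{\sigma,e}$: $\mathrm{Prop}(\sigma) \to$ ``$\Phi_e^{H_\sigma}$ is not a solution to $X_\emptyset$'' and $\beta_{\sigma,e}$: $\mathrm{Prop}(\sigma \frown e) \to$ ``$X_{\sigma\frown e}$ is a solution to $\Phi_e^{H_\sigma}$''.

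The first step is finite satisfiability of $T$. Given a finite $T_0 \subseteq T$ mentioning only $X_\sigma$ with $|\sigma| \leq N$, fix $\mathcal N \vDash \Gamma + \neg\Delta_N$ (available by hypothesis) and, by iterated instantiation, extract witnesses $X_0^\ast$ and $X_i^\ast(\vec e, \vec Y)$ realizing $\neg\Delta_N$ in $\mathcal N$. Interpret each $X_\sigma$ (for $\sigma = (e_0,\ldots,e_{k-1})$) recursively by $X_\emptyset \defeq X_0^\ast$ and $X_\sigma \defeq X_k^\ast(e_0, Y_0^\sigma, \ldots, e_{k-1}, Y_{k-1}^\sigma)$ with $Y_j^\sigma \defeq \Phi_{e_j}^{H_{\sigma\restriction j}}$. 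Each $\alpha_{\sigma,e}$ or $\beta_{\sigma,e}$ in $T_0$ is either vacuous (if $\mathrm{Prop}(\sigma)$ fails) or else verifiable by applying $\neg\Theta_N$ at the tuple $(\sigma(0), Y_0^\sigma, \ldots, \sigma(|\sigma|-1), Y_{|\sigma|-1}^\sigma, e, \Phi_e^{H_\sigma}, \ldots)$: propagation through the first $|\sigma|$ levels is guaranteed because $\mathrm{Prop}(\sigma)$ supplies the $\mathsf Q$-instance hypothesis at each earlier level and Skolem forces each $X_{\sigma\restriction (j+1)}$ to be a solution to $Y_j^\sigma$, so $\neg\Theta_N$ yields the required conclusion at level $|\sigma|$. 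By compactness, $T$ has a countable model $\mathcal M = (M, S)$.

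The second step extracts inside $\mathcal M$ a suitable second-order part $S' \subseteq S$. Let $T' \defeq \{\sigma : \mathrm{Prop}(\sigma)$ holds in $\mathcal M\}$, an initial-segment-closed subtree of $\omega^{<\omega}$. Fix a bijective enumeration $(e_i, k_i)_{i\in\omega}$ of $\omega \times \omega$ with $k_i \leq i$, and build $\pi \in \omega^{\leq\omega}$ inductively: at stage $m$, find the least $i$ with $k_i \leq m$, with $(e_i, k_i)$ not yet handled, and with $\Phi_{e_i}^{H_{\pi\restriction k_i}}$ a $\mathsf Q$-instance in $\mathcal M$; if such $i$ exists, set $\pi(m)$ to an index $e^\ast$ (supplied by the universal-$\Sigma^0_1$ hypothesis) with $\Phi_{e^\ast}^{H_{\pi\restriction m}} = \Phi_{e_i}^{H_{\pi\restriction k_i}}$ and mark $(e_i, k_i)$ handled; otherwise terminate. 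By construction each $\pi\restriction k \in T'$, and every pair $(e,k)$ whose associated set is a $\mathsf Q$-instance in $\mathcal M$ is eventually handled. Let $S'$ be the $\Delta^0_1$-closure in $\mathcal M$ of $\{X_{\pi\restriction k} : 0 \leq k \leq |\pi|\}$. Every element of $S'$ has the form $\Phi_e^{H_{\pi\restriction k}}$, hence by $\alpha_{\pi\restriction k, e}$ is not a solution to $X_\emptyset$; every $\mathsf Q$-instance in $S'$ is handled at some stage $m$, yielding $X_{\pi\restriction (m+1)} \in S'$ as a solution by $\beta_{\pi\restriction m, \pi(m)}$. Since $\Gamma$ is an extension of $\Delta^0_1$-comprehension by $\Pi^1_1$ formulas and both properties are preserved when restricting to a $\Delta^0_1$-closed subfamily of $S$, $(M, S') \vDash \Gamma + \mathsf Q + \neg \mathsf P$, contradicting $\Gamma \vdash \mathsf Q \to \mathsf P$.

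The principal obstacle is that $\neg\Theta_N$ only yields useful Skolem-side conclusions at level $j$ once every earlier $Y_i$ is a $\mathsf Q$-instance --- that is, only along ``propagating'' tuples. The conditional formulation of $T$'s axioms encodes this cleanly; the remaining difficulty is to assemble, from the resulting model, a single linear path $\pi$ that stays inside the propagating subtree $T'$ while handling every $\mathsf Q$-instance in the $\Delta^0_1$-closure of its history, without enlarging $S'$ into regions where a spurious solution to $X_\emptyset$ could appear.
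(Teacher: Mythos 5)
Your overall plan---prove the contrapositive, use compactness to amalgamate Skolem witnesses for the sentences $\neg\Delta_n$, and extract a contradiction with $\Gamma \vdash \mathsf{Q}\to\mathsf{P}$---is the same as the paper's, and your unfolding of $\neg\Theta_N$ (that under $\mathrm{Prop}(\sigma)$ the witness $X_{\sigma^\frown e}$ is automatically a solution to $\Phi_e^{H_\sigma}$, and no $\Phi_e^{H_\sigma}$ is a solution to $X_\emptyset$) is correct. But there is a genuine gap in how you Skolemize. You adjoin one constant $X_\sigma$ for each $\sigma\in\omega^{<\omega}$ and one pair of axioms $\alpha_{\sigma,e},\beta_{\sigma,e}$ for each \emph{standard} $e$. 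The model $\mathcal M=(M,S)$ produced by compactness is in general nonstandard, and the $\Delta^0_1$-closure $S'$ of $\{X_{\pi\restriction k}\}$ consists of all sets of the form $\Phi_e^{H_{\pi\restriction k}}$ for $e$ ranging over \emph{all of} $|M|$, nonstandard elements included (this is built into the definition of $M[X_0,\ldots,X_n]$, which allows arbitrary first-order parameters). For nonstandard $e$ your theory $T$ contains no axiom $\alpha_{\sigma,e}$, so such a set could be a solution to $X_\emptyset$, killing $\neg\mathsf{P}$; and your handling procedure only enumerates $\omega\times\omega$, so a $\mathsf{Q}$-instance $\Phi_e^{H_{\pi\restriction k}}$ with nonstandard $e$ is never assigned a solution in $S'$, killing $\mathsf{Q}$. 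The $\alpha$ side could be repaired by replacing the family $\{\alpha_{\sigma,e}: e\in\omega\}$ with a single axiom universally quantifying $e$ inside the object language (which $\neg\Delta_N$ does support, since its universal quantifiers on the $e_j$ range over the whole model), but the $\beta$ side cannot: the required solution witness $X_{\sigma^\frown e}$ for nonstandard $e$ has no name among your constants.

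This is exactly why the paper adjoins a \emph{function symbol} $f$ from first-order to second-order objects rather than a family of constants: the sentences $\Psi_k \equiv \forall e_0,Y_0\cdots\forall e_k,Y_k\,\neg\Theta_k(e_0,\ldots,e_k,f(\langle\rangle),f(\langle e_0\rangle),\ldots,f(\langle e_0,\ldots,e_k\rangle),Y_0,\ldots,Y_k)$ bind the indices as object-language variables, so in the compactness model the Skolem values $f^{\mathcal N}(\langle e_0,\ldots,e_{k}\rangle)$ exist and do their job for nonstandard $e_i$ as well. (The paper then also shortcuts your second step: rather than hand-building a model of $\Gamma+\mathsf{Q}+\neg\mathsf{P}$ along a path, it reads off from $f^{\mathcal N}$ a winning strategy for Player 1 in $G^\Gamma(\mathsf{Q}\to\mathsf{P})$ and contradicts Proposition~\ref{auxprop}; your path-and-closure construction is essentially a re-proof of the relevant half of that proposition and would be fine once the nonstandard-index problem is fixed.) As written, though, the final claim $(M,S')\vDash\Gamma+\mathsf{Q}+\neg\mathsf{P}$ does not follow.
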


\begin{proof}
Suppose that $\Gamma \vdash \mathsf{Q} \rightarrow \mathsf{P}$ but
$\Gamma \vdash \neg\Delta_n$ for all $n$. Extend $L_2$ to include a
function symbol $f$ from first-order objects to second-order
objects. Call this new language $L_2'$. Let $\langle
\cdot,\ldots,\cdot \rangle$ be a fixed numbering scheme for finite
tuples of numbers.

For each $n$, there is a model $\mathcal M=(M,S)$ of
$\Gamma+\neg\Delta_n$. We can turn $\mathcal M$ into an
$L_2'$-structure by defining the interpretation $f^{\mathcal M}$ by
recursion as follows.

There is an $X_0 \in S$ such that
\begin{multline*}
\mathcal M \vDash \forall e_0, Y_0\, \exists X_1\,
\forall e_1, Y_1 \cdots \exists X_n\, \forall e_n, Y_n\, \neg
\Theta_n(e_0, \ldots, e_n,\\
X_0, \ldots, X_n, Y_0, \ldots, Y_n).
\end{multline*}
Let $f^{\mathcal M}(\langle{}\rangle)=X_0$.

Assume we have defined $f^{\mathcal M}(\langle e_0,\ldots,
e_{j-1}\rangle)$, where $j < n$, and have also defined
$Y_{\langle e_0 \rangle},Y_{\langle e_0,e_1 \rangle},\ldots,Y_{\langle
e_0,\ldots,e_{j-1}\rangle} \in S$ so that 
\begin{multline*}
\mathcal M \vDash \forall e_j, Y_j \,
\exists X_{j+1}\, \forall e_{j+1}, Y_{j+1} \exists X_{j+2} \cdots \exists X_n\, \forall e_n, Y_n\,
\neg \Theta_n(e_0, \ldots, e_n,\\
f^{\mathcal
  M}(\langle{}\rangle),f^{\mathcal M}(\langle e_0 \rangle),
\ldots,f^{\mathcal M }(\langle e_0,\ldots,e_{j-1}\rangle),X_{j+1}\ldots,
X_n,\\
Y_{\langle e_0 \rangle},Y_{\langle e_0,e_1 \rangle},\ldots,Y_{\langle
e_0,\ldots,e_{j-1}\rangle}, Y_j, \ldots, Y_n).
\end{multline*}
 Given $e_j \in M$, let
$Y_{\langle e_0,\ldots,e_j\rangle}=\Phi_{e_j}^{(f^{\mathcal M}(\langle{}\rangle) \oplus f^{\mathcal
M}(\langle e_0 \rangle) \oplus \cdots \oplus f^{\mathcal M
}(\langle e_0,\ldots,e_{j-1}\rangle))} \in S$. Then there is an $X_{j+1}
 \in S$
such that 
\begin{multline*}
\mathcal M \vDash \forall e_{j+1}, Y_{j+1} \,
\exists X_{j+2}\, \forall e_{j+2}, Y_{j+2} \exists X_{j+3} \cdots \exists X_n\, \forall e_n, Y_n\,
\neg \Theta_n(e_0, \ldots, e_n,\\
f^{\mathcal
M}(\langle{}\rangle),f^{\mathcal M}(\langle e_0 \rangle),
\ldots,f^{\mathcal M }(\langle
e_0,\ldots,e_{j-1}\rangle),X_{j+1}\ldots,X_n,\\
Y_{\langle e_0 \rangle},Y_{\langle e_0,e_1 \rangle},\ldots,Y_{\langle
e_0,\ldots,e_j\rangle},Y_{j+1}, \ldots, Y_n). 
\end{multline*}
Let $f^{\mathcal M}(\langle e_0,\ldots,e_j
\rangle)=X_{j+1}$.

Having defined $f^{\mathcal M}$ on all $\langle e_0,\ldots,e_i
\rangle$ for $i \leq n$, let $f^{\mathcal M}(x)=\emptyset$ for all
other $x \in M$. 

Let
\begin{multline*}
\Psi_k \equiv \forall e_0,Y_0 \, \cdots\, \forall e_k,Y_k\,
\neg \Theta_k(e_0, \ldots, e_k,\\
f(\langle{}\rangle),f(\langle e_0 \rangle),
\ldots,f(\langle e_0,\ldots,e_k\rangle),Y_0, \ldots, Y_k).
\end{multline*}
Then
$(\mathcal M;f^{\mathcal M}) \vDash \Psi_n$ by the definition of
$f^{\mathcal M}$. It is easy to see that this fact implies that
$(\mathcal M;f^{\mathcal M}) \vDash \Psi_k$ for all $k \leq n$.

Thus every set $\Gamma \cup \{\Psi_0,\ldots,\Psi_n\}$ is satisfiable,
and hence so is the union $\Gamma \cup \{\Psi_0,\Psi_1,\ldots\}$. Let $\mathcal
N=(N,T)$ be a model of this set. Now we have a winning
strategy for Player 1 for $G^{\Gamma}(\mathsf{Q} \rightarrow
\mathsf{P})$: Player 1 begins by playing $N$ and $f^{\mathcal
N}(\langle{}\rangle)$, and if
$e_0,\ldots,e_{n-1}$ are indices for Player 2's first $n$
moves, then Player 1 plays $f^{\mathcal N}(\langle e_0,\ldots,e_{n-1}\rangle)$ on
its next move. By the definition of $\Psi_n$, Player 2 can never play
a solution to $f^{\mathcal N}(\langle{}\rangle)$.

But by Proposition~\ref{auxprop} and our assumption
that $\Gamma \vdash \mathsf{Q} \rightarrow \mathsf{P}$, Player 2 must
have a winning strategy for $G^{\Gamma}(\mathsf{Q} \rightarrow
\mathsf{P})$, so we have a contradiction.
\end{proof}

\begin{proof}[Proof of Theorem~\ref{classicalmainthm}]
We use the notation of Theorem~\ref{classicalauxthm}.  By
Proposition~\ref{auxprop}, it is enough to show that if $\Gamma \vdash
\mathsf{Q} \rightarrow \mathsf{P}$ then there is an $n$ such that
Player 2 has a winning strategy for $\widehat{G}^{\Gamma}(\mathsf{Q}
\rightarrow \mathsf{P})$ that ensures victory in at most $n$ many
moves. So suppose that $\Gamma \vdash \mathsf{Q} \rightarrow
\mathsf{P}$. Let $n$ be as in Theorem~\ref{classicalauxthm}.

Player 2 can play as follows. Let $\mathcal M=(M,S)$
be the model of $\Gamma$ played by Player 1 on its first move. Since
$\mathcal M$ is a model of $\Gamma$, it is also a model of
$\Delta_n$. Let $X_0$ be Player 1's first move. Since $X_0$ is in $S$,
there are $e_0 \in M$ and $Y_0 \in S$ such that $\mathcal M$ satisfies
\[
\forall X_1\, \exists e_1, Y_1\, \forall X_2\, \exists e_2,
Y_2 \cdots \forall X_n\, \exists e_n, Y_n\, \Theta_n(e_0, \ldots,
e_n,X_0, \ldots, X_n, Y_0, \ldots, Y_n).
\]
Now Player 2 plays $Y_0$. Let $X_1$ be Player 1's next move. Then
there are $e_1 \in M$ and $Y_1 \in S$ such that
$\mathcal M$ satisfies
\[
\forall X_2\, \exists e_2, Y_2\, \forall X_3\, \exists e_3,
Y_3 \cdots \forall X_n\, \exists e_n, Y_n\, \Theta_n(e_0, \ldots,
e_n,X_0, \ldots, X_n, Y_0, \ldots, Y_n).
\]
Now Player 2 plays $Y_1$.

Continuing in this way, by the definition of $\Delta_n$, some $Y_i$
with $i \leq n$ must be a solution
to $X_0$, and thus this strategy ensures victory by Player 2 in at
most $n+1$ many moves.
\end{proof}

We do not know whether Theorem~\ref{classicalmainthm} holds for
$G^\Gamma(\mathsf{Q} \rightarrow \mathsf{P})$ in general, but
normally, if $\Gamma \vdash \mathsf{Q} \rightarrow \mathsf{P}$ then
the proof allows us to obtain a winning strategy for Player 2 in
$\widehat{G}^\Gamma(\mathsf{Q} \rightarrow \mathsf{P})$ (and even in
$G^\Gamma(\mathsf{Q} \rightarrow \mathsf{P})$) that is relatively easy
to describe. (The special case of computable winning strategies will
be discussed in Section~\ref{compsec}.) In such cases, we
can show that there is an $n$ such that this particular winning
strategy allows Player 2 to win in at most $n$ many moves, not just in
$\widehat{G}^\Gamma(\mathsf{Q} \rightarrow \mathsf{P})$ but in fact in
$G^\Gamma(\mathsf{Q} \rightarrow \mathsf{P})$. Here we are
thinking of strategies that are first-order definable, but we need to
take into account the possibility that there might not be a unique
choice of move at a given point (keeping in mind that the idea of
choosing the least among the indices of equally good moves is not
always available when working over nonstandard models). 

\begin{defn}
\label{accordingdef}
Let $\Gamma$ be a consistent set of $L_2$-formulas and let
$\Lambda(X,n,e)$ be an arithmetic formula. Say that Player 2 plays a
run of $G^\Gamma(\mathsf{Q} \rightarrow \mathsf{P})$ or
$\widehat{G}^\Gamma(\mathsf{Q} \rightarrow \mathsf{P})$
\emph{according to $\Lambda$} if given Player 1's first $n$ moves, $M$
(or $(M,S)$) and $X_0,\ldots,X_{n-1} \subseteq M$, Player 2 plays
$\Phi_e^{X_0 \oplus \cdots \oplus X_{n-1}}$ for some $e \in M$ such
that $M[X_0,\ldots,X_{n-1}] \vDash \Lambda(X_0 \oplus \cdots \oplus
X_{n-1},n-1,e)$.
\end{defn}

\begin{thm}
\label{defthm}
Let $\Gamma$ be a consistent extension of $\Delta^0_1$-comprehension
that proves the existence of a universal $\Sigma^0_1$ formula. Let
$\mathsf{P}$ and $\mathsf{Q}$ be $\Pi^1_2$-problems and
$\Lambda$ be an arithmetic formula such that Player 2 wins any
run of $\widehat{G}^\Gamma(\mathsf{Q} \rightarrow \mathsf{P})$ that
it plays according to $\Lambda$. Then there is an $n$ such that Player
2 wins any run of $G^\Gamma(\mathsf{Q} \rightarrow \mathsf{P})$ that
it plays according to $\Lambda$ in at most $n$ many moves.
\end{thm}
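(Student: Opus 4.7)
The plan is to mimic the proof of Theorem~\ref{classicalmainthm}, modifying the family of formulas $\Delta_n$ to record Player 2's restriction to $\Lambda$-compatible indices. For each $n \in \omega$, let $\widetilde{\Delta}_n^\Lambda$ be the $L_2$-formula
\[
\forall X_0, e_0, Y_0, \ldots, X_n, e_n, Y_n\, \bigl[\Xi_n^\Lambda \to \bigvee\nolimits_{i \leq n}\, (Y_i \text{ is a solution to } X_0)\bigr],
\]
where $\Xi_n^\Lambda$ expresses that $X_0$ is a $\mathsf{P}$-instance, each $e_i$ satisfies $\Lambda(X_0 \oplus \cdots \oplus X_i, i, e_i)$ together with $Y_i = \Phi_{e_i}^{X_0 \oplus \cdots \oplus X_i}$, and each $Y_j$ for $j < n$ is a $\mathsf{Q}$-instance solved by $X_{j+1}$. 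Intuitively, $\widetilde{\Delta}_n^\Lambda$ says that every legal $\Lambda$-play in which Player 2 has made $n+1$ moves is already a win for Player 2. The conclusion of the theorem will follow from the claim that $\Gamma \vdash \widetilde{\Delta}_n^\Lambda$ for some $n$: in any run of $G^\Gamma(\mathsf{Q} \to \mathsf{P})$ where Player 2 plays according to $\Lambda$, the structure $M[X_0,\ldots,X_n]$ is a model of $\Gamma$ and hence of $\widetilde{\Delta}_n^\Lambda$, and since $\Xi_n^\Lambda$ is arithmetic in its parameters and the actual play sits inside $M[X_0,\ldots,X_n]$ and satisfies $\Xi_n^\Lambda$ by construction, some $Y_i$ with $i \leq n$ must already solve $X_0$.

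To prove the key claim, I argue by contradiction using compactness, as in Theorem~\ref{classicalauxthm}, though the argument here is simpler because $\lnot\widetilde{\Delta}_n^\Lambda$ is purely existential (asserting the existence of a single bad play), so the Skolem function $f^{\mathcal{M}}$ of that proof can be replaced by fresh constants naming its entries. Suppose $\Gamma \nvdash \widetilde{\Delta}_n^\Lambda$ for every $n$. Extend $L_2$ by constants $\dot{X}_k, \dot{e}_k, \dot{Y}_k$ for $k \in \omega$, and consider
\[
T = \Gamma \cup \bigl\{\Xi_n^\Lambda(\dot{X}_0,\dot{e}_0,\dot{Y}_0,\ldots,\dot{X}_n,\dot{e}_n,\dot{Y}_n) \wedge \bigwedge\nolimits_{i \leq n}\lnot(\dot{Y}_i \text{ solves } \dot{X}_0) : n \in \omega\bigr\}.
\]
Any finite subset is satisfiable: if $n$ is the largest index appearing, a model of $\Gamma + \lnot\widetilde{\Delta}_n^\Lambda$ gives interpretations of $\dot{X}_0,\ldots,\dot{Y}_n$ realizing the existential witness, every prefix of a $\Lambda$-valid non-winning play is itself $\Lambda$-valid and non-winning (so the axioms for smaller indices also hold), and remaining constants may be set arbitrarily. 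By compactness and L\"owenheim--Skolem, $T$ has a countable model $\mathcal{N}=(N,S)$. Now Player 1 can defeat any $\Lambda$-Player 2 in $\widehat{G}^\Gamma(\mathsf{Q} \to \mathsf{P})$ by opening with $\mathcal{N}$ together with $\dot{X}_0^{\mathcal{N}}$ (a legal move, since $\mathcal{N} \vDash \Gamma$ ensures $S$ is closed under $\Delta^0_1$-comprehension) and playing $\dot{X}_k^{\mathcal{N}}$ on turn $k+1$ for $k \geq 1$. As $\dot{e}_k^{\mathcal{N}}$ is $\Lambda$-compatible at stage $k$ by construction, Player 2 is permitted to select that index, producing $\dot{Y}_k^{\mathcal{N}}$, which never solves $\dot{X}_0^{\mathcal{N}}$. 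The resulting $\Lambda$-run is never won by Player 2, contradicting the hypothesis.

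The main technical point that I expect to require care is the absoluteness bookkeeping in the concluding step: one must verify that the arithmetic formula $\Xi_n^\Lambda$ evaluates the same way in each intermediate $M[X_0,\ldots,X_i]$ (where Player 2's $\Lambda$-witness $e_i$ was originally chosen) and in the larger model $M[X_0,\ldots,X_n]$ in which $\widetilde{\Delta}_n^\Lambda$ is invoked. When $\Gamma$ is a $\Pi^1_1$-extension of $\Delta^0_1$-comprehension, as in Theorem~\ref{classicalmainthm}, this is immediate: each $M[X_0,\ldots,X_k]$ is already a model of $\Gamma$, and $\Xi_n^\Lambda$ is first-order with set parameters that persist into larger models. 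Analogous absoluteness holds under the alternative definitions of $M[X_0,\ldots,X_k]$ from the remark after Proposition~\ref{auxprop}, which is what lets the theorem extend to systems such as $\mathsf{ACA}_0$.
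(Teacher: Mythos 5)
Your proposal follows essentially the same route as the paper's proof: your $\widetilde{\Delta}_n^\Lambda$ is, up to moving the graph condition $Y_i = \Phi_{e_i}^{X_0 \oplus \cdots \oplus X_i}$ into the antecedent, the paper's sentence $\Omega_n$ (built from $\widehat{\Theta}_n = \Xi_n \rightarrow \Theta_n$), and the paper likewise replaces the Skolem function of Theorem~\ref{classicalauxthm} by fresh constants $c_i, C_i$ in the compactness step, instantiating the $Y_i$ at $\Phi_{c_i}^{C_0 \oplus \cdots \oplus C_i}$. One step needs tightening: Theorem~\ref{defthm} assumes only that $\Gamma$ is a consistent extension of $\Delta^0_1$-comprehension proving a universal $\Sigma^0_1$ formula, not that the extension is by $\Pi^1_1$ formulas, so in a run of $G^\Gamma(\mathsf{Q} \rightarrow \mathsf{P})$ the structure $M[X_0,\ldots,X_n]$ need not be a model of $\Gamma$ (consider $\Gamma = \mathsf{WKL}_0$); the legality condition on Player 1's moves guarantees only that it is \emph{consistent with} $\Gamma$. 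The repair is the one the paper uses: extend $M[X_0,\ldots,X_n]$ to a model $(M,S)$ of $\Gamma$, invoke $\widetilde{\Delta}_n^\Lambda$ there, and observe that the witnesses of the play lie in $S$ and that $\Xi_n^\Lambda$ (in particular $\Lambda$, being arithmetic) is absolute between $L_2$-structures with the same first-order part containing its set parameters --- exactly the absoluteness bookkeeping you flag in your final paragraph, except that it is needed already under the theorem's stated hypotheses, not only for variants such as $\mathsf{ACA}_0$. With that adjustment the argument is correct.
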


\begin{proof}
Let $\Theta_n$ be as in Theorem~\ref{classicalauxthm}. Let $\Xi_n$ be
a formula asserting that, for all $i \leq n$, if $X_0$ is a
$\mathsf{P}$-instance and no $Y_j$ with $j<i$ is a solution to $X_0$,
then $\Lambda(X_0 \oplus \cdots \oplus X_i,i,e_i)$. Let
$\widehat{\Theta}_n$ be $\Xi_n \rightarrow \Theta_n$, and let
$\Omega_n$ be
\begin{multline*}
\forall X_0\, \forall e_0\, \exists Y_0\, \forall X_1\, \forall e_1\,
\exists Y_1 \cdots \forall X_n\, \forall e_n\, \exists Y_n\\
\widehat{\Theta}_n(e_0,\ldots,e_n,X_0,\ldots,X_n,Y_0,\ldots,Y_n).
\end{multline*}
Suppose there is a run of $G^\Gamma(\mathsf{Q} \rightarrow
\mathsf{P})$ such that Player 2 plays according to $\Lambda$ but does
not win within $n$ moves. Let $M$ and $X_0,\ldots,X_{n-1}$ be Player
1's first $n$ moves in that run. Then $M[X_0,\ldots,X_{n-1}]$ can be
extended to a model $(M,S)$ of $\Gamma$, and in that model,
$\Omega_{n-1}$ does not hold. Thus, to establish the theorem, it is
enough to show that $\Gamma \vdash \Omega_n$ for some $n$.

Assume for a contradiction that $\Gamma \nvdash \Omega_n$ for all
$n$. Expand $L_2$ by adding first-order constant symbols
$c_0,c_1,\ldots$ and second-order constant symbols
$C_0,C_1,\ldots$\hspace{2pt}\nolinebreak. Then a compactness argument
as in the proof of Theorem~\ref{classicalauxthm} shows that there is a
model $\mathcal M$ of $\Gamma$ and interpretations $c_0^{\mathcal
M},c_1^{\mathcal M},\ldots$ and $C_0^{\mathcal M},C_1^{\mathcal
M},\ldots$ such that each $\Phi_{c_n^{\mathcal M}}^{C_0^{\mathcal M}
\oplus \cdots \oplus C_n^{\mathcal M}}$ is total in $\mathcal M$,
and $\mathcal M$ together with these interpretations satisfies
\[
\neg\widehat{\Theta}_n(c_0,\ldots,c_n,C_0,\ldots,C_n,\Phi_{c_0}^{C_0},\ldots,\Phi_{c_n}^{C_0
  \oplus \ldots \oplus C_n})
\] 
for all $n$. But then there is a run of
$\widehat{G}^\Gamma(\mathsf{Q} \rightarrow \mathsf{P})$ in which
Player 2 plays according to $\Lambda$ but does not win, namely the one
in which Player 1 begins by playing $\mathcal M$, then at each move
plays $C_n^{\mathcal M}$, and Player 2 responds with $\Phi_{c_n^{\mathcal M}}^{C_0^{\mathcal M}
\oplus \cdots \oplus C_n^{\mathcal M}}$, which contradicts our
hypothesis.
\end{proof}

For $\Gamma$ is as in Theorem~\ref{classicalmainthm}, write $\Gamma
\vdash\sup{$n$} \mathsf{Q} \rightarrow \mathsf{P}$ to mean that Player
2 has a winning strategy for $\widehat{G}^{\Gamma}(\mathsf{Q}
\rightarrow \mathsf{P})$ that ensures victory in at most $n+1$ many
moves. Then the first part of the theorem can be restated as $\Gamma
\vdash \mathsf{Q} \rightarrow \mathsf{P} \; \Rightarrow \; \exists n\,
[\Gamma \vdash\sup{$n$} \mathsf{Q} \rightarrow \mathsf{P}]$. The idea
behind this notation is that we can see the least $n$ such that
$\Gamma \vdash\sup{$n$} \mathsf{Q} \rightarrow \mathsf{P}$ as a
measure of the number of applications of $\mathsf{Q}$ needed to prove
$\mathsf{P}$ over $\Gamma$. The $n=0$ case is equivalent to $\Gamma
\vdash \mathsf{P}$. We will discuss the $n=1$ case in
Section~\ref{singlesec}, but make the following remark for now.

\begin{rem}
\label{01rem}
Recall that $\mathsf{P} \leq\sub{$\omega$}\sup{$n$} \mathsf{Q}$ means
that Player 2 has a winning strategy for $G(\mathsf{Q} \rightarrow
\mathsf{P})$ that ensures victory in at most $n+1$ many
moves. Hirschfeldt and Jockusch~\cite{HirJoc} stated that $\mathsf{P}
\leq\sub{$\omega$}\sup{$1$} \mathsf{Q}$ is equivalent to $\mathsf{P}
\leq\sub{c} \mathsf{Q}$, but that is not quite correct, because if
$\mathsf{P}$ is computably true (i.e., if $\mathsf{P}
\leq\sub{$\omega$}\sup{$0$} \mathsf{Q}$) but has an instance that does
not compute any instance of $\mathsf{Q}$, then $\mathsf{P}
\leq\sub{$\omega$}\sup{$1$} \mathsf{Q}$ but $\mathsf{P} \nleq\sub{c}
\mathsf{Q}$. (The same point was made in the context of Weihrauch
reducibility by Brattka, Gherardi, and Pauly~\cite[Section 3]{BGP}.)
As this fairly uninteresting case is the only in which the two notions
differ, however, we can generally ignore the distinction. We mention
it, and make the following remarks, only because an analogous
situation will be relevant below.

We can define $\mathsf{P} \leq\sub{$\omega$}\sup{$\mathord{=}n$}
\mathsf{Q}$ to mean that Player 2 has a winning strategy for
$G(\mathsf{Q} \rightarrow \mathsf{P})$ that ensures victory in exactly
$n+1$ many moves. Then $\mathsf{P} \leq\sub{c} \mathsf{Q}$ is
equivalent to $\mathsf{P} \leq\sub{$\omega$}\sup{$\mathord{=}1$}
\mathsf{Q}$. This definition is not otherwise very useful, though,
because if Player 2 can win $G(\mathsf{Q} \rightarrow \mathsf{P})$ in
$m \geq 2$ many moves, then it can also win that game in $k$ many
moves for any $k>m$, simply by repeating its first move until it is
ready to win, except in the case in which Player 2's first move is an
instance of $\mathsf{Q}$ with no solution (and in this context we are
generally not interested in problems that are false over $\omega$ as
statements of second-order arithmetic).

Note also that $\mathsf{P} \leq\sub{$\omega$}\sup{$n$} \mathsf{Q}$ is
not quite equivalent to $\exists m \leq n\, [\mathsf{P}
\leq\sub{$\omega$}\sup{$\mathord{=}m$} \mathsf{Q}]$, again because
of $1$-move runs. For example, let $\mathsf{P}$ be the
$\Pi^1_2$-problem whose instances are $\emptyset$ and $\emptyset'$,
with unique solutions $\emptyset$ and $\emptyset''$, respectively; and
let $\mathsf{Q}$ be the $\Pi^1_2$-problem whose only instance is
$\emptyset'$, with unique solution $\emptyset''$. If Player 1 begins
by playing $\emptyset'$, then Player 2 cannot win immediately, but can
play $\emptyset'$, to which Player 1 must reply with $\emptyset''$, at
which point Player 2 wins by playing $\emptyset''$. So in this case,
Player 2 wins in $2$ moves. However, if Player 1 plays $\emptyset$,
then Player 2 has only one legal move, namely the winning move
$\emptyset$. Thus $\mathsf{P} \leq\sub{$\omega$}\sup{$1$} \mathsf{Q}$,
but the first case shows that $\mathsf{P}
\nleq\sub{$\omega$}\sup{$\mathord{=}0$} \mathsf{Q}$, while the second
case shows that $\mathsf{P} \nleq\sub{$\omega$}\sup{$\mathord{=}1$}
\mathsf{Q}$.

Similar considerations hold for the notion of $\mathsf{P}
\leq\sub{gW}\sup{$n$} \mathsf{Q}$ introduced in~\cite{HirJoc}, and for
$\Gamma \vdash\sup{$n$} \mathsf{Q} \rightarrow \mathsf{P}$. One way
around these issues is to replace $\mathsf{Q}$ with the problem
$\widehat{\mathsf{Q}}$ where an instance is either $\{0\} \cup \{n+1 :
n \in X\}$ for an instance $X$ of $\mathsf{Q}$, with a solution to
this instance being any solution to $X$; or $\emptyset$, with the only
solution being $\emptyset$ (although if we allow problems $\mathsf{Q}$
that have instances with no solutions, we might still have $\mathsf{P}
\leq\sub{gW}^n \widehat{\mathsf{Q}}$ but not have $\exists m \leq n\,
[\mathsf{P} \leq\sub{gW}\sup{$\mathord{=}m$} \widehat{\mathsf{Q}}]$,
because a computable winning strategy might not be able to tell when
it is about to play an instance of $\mathsf{Q}$ with no solution, and
thus instantly win).
\end{rem}

The definition of $\Gamma \vdash\sup{$n$} \mathsf{Q} \rightarrow
\mathsf{P}$ was made in~\cite{HirJoc} (for $\Gamma=\mathsf{RCA}_0$),
but with $G^{\Gamma}(\mathsf{Q} \rightarrow \mathsf{P})$ in place of
$\widehat{G}^{\Gamma}(\mathsf{Q} \rightarrow \mathsf{P})$. We have
chosen our definition in light of Theorem~\ref{classicalmainthm}, but
at least in natural cases, there should be no difference, as shown by
the following fact.

\begin{prop}
\label{defprop}
Let $\Gamma$ be a consistent extension of $\Delta^0_1$-comprehension
that proves the existence of a universal $\Sigma^0_1$ formula. Let
$\mathsf{P}$ and $\mathsf{Q}$ be $\Pi^1_2$-problems and $\Lambda$ be
an arithmetic formula such that Player 2 wins any run of
$\widehat{G}^\Gamma(\mathsf{Q} \rightarrow \mathsf{P})$ that it plays
according to $\Lambda$ in at most $n$ many moves. Then Player 2 wins
any run of $G^\Gamma(\mathsf{Q} \rightarrow \mathsf{P})$ that it
plays according to $\Lambda$ in at most $n$ many moves.
\end{prop}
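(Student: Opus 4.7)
The plan is to argue by contrapositive: I will suppose there is a run of $G^\Gamma(\mathsf{Q} \rightarrow \mathsf{P})$ played by Player 2 according to $\Lambda$ in which Player 2 fails to win within $n$ moves, and use it to construct a run of $\widehat{G}^\Gamma(\mathsf{Q} \rightarrow \mathsf{P})$ with the same defect, contradicting the hypothesis. The main technical content is simply locating a suitable model of $\Gamma$ to serve as Player 1's opening move in the modified game.

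First I would fix such a putative bad run of $G^\Gamma(\mathsf{Q} \rightarrow \mathsf{P})$, with Player 1's successive moves being $M, X_0, X_1, \ldots, X_{n-1}$ and Player 2's responses being $Y_1, \ldots, Y_n$, none of which is a solution to $X_0$. By the rules governing $G^\Gamma$, the $L_2$-structure $M[X_0, \ldots, X_{n-1}]$ is consistent with $\Gamma$, so there is a countable $L_2$-structure $(M, S) \vDash \Gamma$ extending it with the same first-order part. In particular $X_0, \ldots, X_{n-1} \in S$.

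Next I would simulate the run above inside $\widehat{G}^\Gamma(\mathsf{Q} \rightarrow \mathsf{P})$: on its first move Player 1 plays $(M, S)$ together with $X_0$, and on each subsequent move it plays $X_i$ in turn. All these moves are legal because each $X_i$ lies in $S$. The key observation is that whether a response for Player 2 is permitted under $\Lambda$ depends, by Definition~\ref{accordingdef}, only on the substructure $M[X_0, \ldots, X_{k-1}]$ and the disjoint sum $X_0 \oplus \cdots \oplus X_{k-1}$; both of these are identical in the two runs, and are unaffected by the larger ambient model $S$. Hence Player 2 may respond with exactly the same sequence $Y_1, \ldots, Y_n$, each of which remains a legal non-winning move, producing a run of $\widehat{G}^\Gamma(\mathsf{Q} \rightarrow \mathsf{P})$ played according to $\Lambda$ in which Player 2 does not win within $n$ moves, contradicting the hypothesis.

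The only potential obstacle is verifying that legality of moves transfers cleanly between the two games. For Player 1 this is handled by our choice of $(M,S)$, which makes each $X_i$ automatically an element of $S$. For Player 2 it is handled by the fact that $\Lambda$ is evaluated inside $M[X_0, \ldots, X_{k-1}]$ rather than in the ambient structure, so the conditions for a move to count as played according to $\Lambda$ are identical in $G^\Gamma$ and in $\widehat{G}^\Gamma$. Everything else is bookkeeping.
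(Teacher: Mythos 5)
Your proof is correct. The two observations it rests on --- that the consistency requirement in $G^\Gamma$ lets you extend $M[X_0,\ldots,X_{n-1}]$ to a (countable) model $(M,S)$ of $\Gamma$ containing all of Player 1's moves, and that both the legality of Player 2's moves and the condition of playing according to $\Lambda$ are evaluated inside $M[X_0,\ldots,X_{n-1}]$ and hence are insensitive to the ambient second-order part --- are exactly the ones the paper uses. The difference is organizational: the paper's proof is a one-line appeal to the machinery of Theorem~\ref{defthm}, first noting that the hypothesis yields $\Gamma \vdash \Omega_{n-1}$ (any model of $\Gamma + \neg\Omega_{n-1}$ would give Player 1 a bad opening for $\widehat{G}^\Gamma$) and then reusing the argument there that a bad $\Lambda$-run of $G^\Gamma$ produces a model of $\Gamma + \neg\Omega_{n-1}$; your version composes these two steps directly, transplanting the bad run of $G^\Gamma$ into $\widehat{G}^\Gamma$ without ever naming the formula $\Omega_{n-1}$. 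What you gain is a self-contained argument that makes no use of compactness or the syntactic apparatus; what the paper gains is brevity, since that apparatus is already in place. One small point worth adding for completeness: a run that Player 2 fails to win within $n$ moves could in principle also be one in which Player 2 has no legal move according to $\Lambda$ at some stage $k \leq n$; since the set of $\Lambda$-legal moves at each stage is the same in both games, this degenerate case transfers by the identical simulation, so nothing is lost.
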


\begin{proof}
In the notation of the proof of Theorem~\ref{defthm}, it is easy to
see that $\Gamma \vdash \Omega_{n-1}$, and hence Player 2 has a winning
strategy for $G^{\Gamma}(\mathsf{Q} \rightarrow \mathsf{P})$ that
ensures victory in at most $n$ many moves as in that proof.
\end{proof}

\begin{rem}
Hirst and Mummert~\cite{HirMum} discussed a different potential form
of instance-counting, based on a notion of proving a $\Pi^1_2$
principle $\mathsf{P}$ with one typical use of another $\Pi^1_2$
principle $\mathsf{Q}$ in a system $\Gamma$. While the definition of
that notion in their paper is not quite correct~[Hirst and Mummert,
personal communication], its main significance is that it allowed
them to conclude that, in cases of interest, $\Gamma$ then proves that
for every instance $X$ of $\mathsf{P}$, there is an instance $Y$ of
$\mathsf{Q}$ such that if $Y$ has a solution then so does $X$. While
their paper is mostly concerned with intuitionistic logic, they also
gave examples showing that this notion does not seem useful in the
context of classical logic. In particular they showed how
$\mathsf{RT}^2_4$ can be obtained with one typical use of
$\mathsf{RT}^2_2$ over $\mathsf{RCA}_0$, contrary both to our
intuition and to the fact that $\mathsf{RCA}_0 \nvdash^1
\mathsf{RT}^2_2 \rightarrow \mathsf{RT}^2_4$, which follows from
Patey's result~\cite{Patey} that $\mathsf{RT}^2_4 \nleq\sub{c}
\mathsf{RT}^2_2$. In fact, as conjectured by J. Miller~[Hirst and
Mummert, personal communication], this phenomenon is not a
particularity of this and other examples mentioned in~\cite{HirMum},
but is in fact completely general. Indeed, in classical logic, if
$\Gamma \vdash \mathsf{Q} \rightarrow \mathsf{P}$ then we can always
argue in $\Gamma$ as follows: Let $X$ be an instance of
$\mathsf{P}$. Then there are $i$ and $Y$ such that either $i=0$ and
$Y$ is a solution to $X$, or $i=1$ and $Y$ is an instance of
$\mathsf{Q}$ with no solution. If $i=1$ then we get a contradiction
from one use of $\mathsf{Q}$, so $i=0$ and hence $Y$ is a solution to
$X$.

Perhaps more satisfying than the above argument is the following one,
which is directly in the style of the one given in~\cite{HirMum} for
$\mathsf{RT}^2_2$ and $\mathsf{RT}^2_4$.  Let $\Gamma$ be as in
Theorem~\ref{classicalmainthm}, and let $\mathsf{P}$ and $\mathsf{Q}$
be $\Pi^1_2$-problems such that $\Gamma \vdash \mathsf{Q} \rightarrow
\mathsf{P}$. Let $\Theta_n$ and $\Delta_n$ be as in
Theorem~\ref{classicalauxthm}. By that theorem, there is an $n$ such
that $\Gamma \vdash \Delta_n$. The following proof can be carried out
in $\Gamma$.

Let $X_0$ be an instance of $\mathsf{P}$. For each $k=0,\ldots,n$ in
turn, proceed as follows. Given $X_0,\ldots,X_k$,
$e_0,\ldots,e_{k-1}$, and $Y_0,\ldots,Y_{k-1}$, let $e_k$ and $Y_k$ be
such that
\[
\forall X_{k+1}\, \exists e_{k+1}, Y_{k+1}\, \cdots \forall X_n\,
\exists e_n, Y_n\, \Theta_n(e_0, \ldots, e_n,X_0, \ldots, X_n,Y_0,
\ldots, Y_n).
\]
If $Y_k$ is a solution to $X_0$ then let $Y=Y_k$ and let
$i=0$. Otherwise, $Y_k$ is an instance of $\mathsf{Q}$. Either that
instance has a solution or not. If it does not then let $Y=Y_k$ and
let $i=1$. If it does, then let $X_{k+1}$ be such a solution.

By the definition of $\Theta_n$, we must eventually define $Y$, $i$,
and $j$. If $i=1$ then $Y$ is an instance of $\mathsf{Q}$ with no
solution. But with one application of $\mathsf{Q}$, we can obtain a
solution to $Y$, so we must have $i=0$, and hence $Y$ is a solution to
$X_0$.
\end{rem}

\section{Computable winning strategies}
\label{compsec}

We now turn to the notion of generalized Weihrauch reducibility for
games over possibly nonstandard models. Let $\Gamma$ be a set of
$L_2$-formulas consistent with $\Delta^0_1$-comprehensiaon that proves
the existence of a universal $\Sigma^0_1$ formula. Let $\mathsf{P}$
and $\mathsf{Q}$ be $\Pi^1_2$-problems. A \emph{computable strategy}
for Player 2 in $G^\Gamma(\mathsf{Q} \rightarrow \mathsf{P})$ or
$\widehat{G}^\Gamma(\mathsf{Q} \rightarrow \mathsf{P})$ consists of
Player 2 playing according to the formula $e=\Phi_k(n-1)$ (in the
sense of Definition~\ref{accordingdef}) for some $k \in \omega$.

\begin{rem}
To be precise, in the above definition we also need to have a
mechanism to distinguish computably when Player 2 has played a winning
move. Formally, we can simply slightly alter our games so that a move
by Player 2 is either $\{n+1 : n \in Y\}$ where $Y$ is a
$\mathsf{Q}$-instance or $\{0\} \cup \{n+1 : n \in Y\}$ where $Y$ is a
solution to Player 1's first move $X_0$.
\end{rem}

Combining Theorem~\ref{defthm} and Proposition~\ref{defprop} gives us
the following.

\begin{prop}
\label{gwgameprop}
Let $\Gamma$ be a consistent extension of $\Delta^0_1$-comprehension
that proves the existence of a universal $\Sigma^0_1$ formula, and let
$\mathsf{P}$ and $\mathsf{Q}$ be $\Pi^1_2$-problems. Then the
following are equivalent.
\begin{enumerate}[\rm (1)]

\item Player 2 has a computable winning strategy for
$G^\Gamma(\mathsf{Q} \rightarrow \mathsf{P})$.

\item Player 2 has a computable winning strategy for
$\widehat{G}^\Gamma(\mathsf{Q} \rightarrow \mathsf{P})$.

\item There is an $n \in \omega$ such that Player 2 has a computable
strategy for $G^\Gamma(\mathsf{Q} \rightarrow \mathsf{P})$ that
ensures victory in at most $n$ many moves.

\item There is an $n \in \omega$ such that Player 2 has a computable
strategy for $\widehat{G}^\Gamma(\mathsf{Q} \rightarrow \mathsf{P})$
that ensures victory in at most $n$ many moves.

\end{enumerate}
Furthermore, $n$ witnesses $\rm (3)$ if{}f it witnesses $\rm (4)$.
\end{prop}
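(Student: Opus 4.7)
The plan is to instantiate the preceding compactness machinery with the specific arithmetic formula $\Lambda(X,n,e) \equiv (e = \Phi_k(n-1))$, which is $\Sigma^0_1$ (``$\Phi_k$ halts on $n-1$ with output $e$'') and which is precisely the formula whose ``playing according to'' clause in Definition~\ref{accordingdef} coincides with the definition of a computable strategy given above. Almost everything then falls out of Theorem~\ref{defthm} and Proposition~\ref{defprop}, supplemented by one routine game-embedding observation.

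The observation is that every run of $\widehat{G}^\Gamma(\mathsf{Q} \to \mathsf{P})$ can be read as a run of $G^\Gamma(\mathsf{Q} \to \mathsf{P})$ as far as Player 2 is concerned: if in $\widehat{G}^\Gamma$ Player 1 plays $(M,S) \vDash \Gamma$ together with $X_0, X_1, \ldots \in S$, then each $M[X_0, \ldots, X_i]$ is contained in $(M,S)$ and hence is consistent with $\Gamma$, so the data $M, X_0, X_1, \ldots$ constitutes a legal sequence of moves for Player 1 in $G^\Gamma$; and Player 2's strategy via $\Phi_k$ depends only on the $X_i$'s and the move index, with the same winning condition in both games. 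This yields (1) $\Rightarrow$ (2) and (3) $\Rightarrow$ (4), preserving the bound in the latter case.

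For the remaining implications, (3) $\Rightarrow$ (1) and (4) $\Rightarrow$ (2) are immediate because winning in a bounded number of moves is in particular winning. The only essentially nontrivial step is (2) $\Rightarrow$ (3), and it is already done for us: apply Theorem~\ref{defthm} with $\Lambda \equiv (e = \Phi_k(n-1))$, noting that its hypothesis is exactly a restatement of (2) and its conclusion is exactly (3). This closes the cycle and gives the four-fold equivalence.

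The final ``furthermore'' assertion ($n$ witnesses (3) iff $n$ witnesses (4)) is settled by the same two tools with the quantifier over $n$ stripped off: (3) $\Rightarrow$ (4) with the same $n$ is the embedding observation, and (4) $\Rightarrow$ (3) with the same $n$ is Proposition~\ref{defprop} applied to $\Lambda \equiv (e = \Phi_k(n-1))$. I do not foresee any real obstacle; the whole proof amounts to recognising the computable-strategy condition as a particular choice of arithmetic $\Lambda$ and then quoting the two preceding results.
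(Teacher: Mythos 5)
Your proposal is correct and is exactly the argument the paper intends: the paper's entire proof is the one-line remark that the proposition follows by combining Theorem~\ref{defthm} and Proposition~\ref{defprop}, and your write-up simply supplies the (correct) details, namely the instantiation $\Lambda(X,n,e) \equiv (e = \Phi_k(n-1))$ together with the observation that every run of $\widehat{G}^\Gamma(\mathsf{Q} \rightarrow \mathsf{P})$ induces a run of $G^\Gamma(\mathsf{Q} \rightarrow \mathsf{P})$ from Player 2's point of view (which the paper has already noted in remarking which of the games is hardest for each player).
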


If the conditions in this proposition hold, then we say that
$\mathsf{P}$ is \emph{generalized Weihrauch reducible over $\Gamma$}
to $\mathsf{Q}$, and write $\mathsf{P} \leq\sub{gW}\sup{$\Gamma$}
\mathsf{Q}$. We can of course define an instance-counting version of
this notion, writing $\mathsf{P} \leq\sub{gW}\sup{$\Gamma,n$}
\mathsf{Q}$ if $n+1$ witnesses that item (3) above holds.

As an example of the application of Proposition~\ref{gwgameprop}, we
can obtain an analog of Corollary~\ref{rt2cor}, using the fact that
Hirschfeldt and Jockusch~\cite[Theorem 4.21]{HirJoc} showed that
$\mathsf{RT}^1_{<\infty} \nleq\sub{gW}\sup{$n$} \mathsf{RT}^1_k$ for
all $n$, while Patey~\cite[Theorem 6.0.1]{Patey2} showed that the same
holds for higher exponents. (Notice that Corollary~\ref{rt2cor} itself
works only for exponent $2$, since $\mathsf{RT}^1_{<\infty}$ is
provable in $\mathsf{RCA}_0 + \mathsf{B}\Sigma^0_2$, while
$\mathsf{RT}^n_k$ for $k>1$ and $\mathsf{RT}^n_{<\infty}$ are both
equivalent to $\mathsf{ACA}_0$ over $\mathsf{RCA}_0$ for $n>2$, as
shown by Simpson~\cite{Simpson1ed} using work of
Jockusch~\cite{Jockusch}.)

\begin{cor}
\label{rt2gwcor}
Let $\Gamma$ consist of $\mathsf{RCA}_0$ together with all $\Pi^1_1$
formulas true over the natural numbers. Then $\mathsf{RT}^n_{<\infty}
\nleq\sub{gW}\sup{$\Gamma$} \mathsf{RT}^n_k$ for all $n$ and $k$.
\end{cor}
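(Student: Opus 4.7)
The plan is to apply Proposition~\ref{gwgameprop} to convert a hypothetical generalized Weihrauch reduction over $\Gamma$ into a bounded-length computable winning strategy for the $\Gamma$-reduction game, and then restrict attention to plays by Player~1 over the standard model $\omega$ in order to contradict the nonreducibility results of Hirschfeldt and Jockusch and of Patey cited just before the corollary.

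In detail, suppose for contradiction that $\mathsf{RT}^n_{<\infty} \leq\sub{gW}\sup{$\Gamma$} \mathsf{RT}^n_k$ for some $n$ and $k$. By Proposition~\ref{gwgameprop}, there are an $m \in \omega$ and a computable strategy $\sigma$ for Player~2 in $G^{\Gamma}(\mathsf{RT}^n_k \rightarrow \mathsf{RT}^n_{<\infty})$ ensuring victory in at most $m$ many moves. I would then consider those runs of this game in which Player~1 plays, on its first move, the standard $L_1$-structure with universe $\omega$. For such runs to be legal, each structure $\omega[X_0,\ldots,X_{j}]$ produced along the way must be consistent with $\Gamma$; but $(\omega,\mathcal{P}(\omega))$ is a model of $\mathsf{RCA}_0$ satisfying every $\Pi^1_1$ sentence true over the natural numbers, hence a model of $\Gamma$ with first-order part $\omega$ containing $\omega[X_0,\ldots,X_j]$. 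Moreover, $\Delta^0_1$-definability over $\omega$ from $X_0 \oplus \cdots \oplus X_j$ coincides with $(X_0 \oplus \cdots \oplus X_j)$-computability, so the legal plays for both players in this restricted setting are precisely the legal plays in the corresponding run of the standard reduction game $G(\mathsf{RT}^n_k \rightarrow \mathsf{RT}^n_{<\infty})$.

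It follows that $\sigma$ restricts to a computable winning strategy for Player~2 in $G(\mathsf{RT}^n_k \rightarrow \mathsf{RT}^n_{<\infty})$ ensuring victory in at most $m$ many moves, i.e., $\mathsf{RT}^n_{<\infty} \leq\sub{gW}\sup{$m-1$} \mathsf{RT}^n_k$. This contradicts \cite[Theorem~4.21]{HirJoc} when $n=1$ and \cite[Theorem~6.0.1]{Patey2} when $n \geq 2$, completing the reductio. The only substantive step is the legality check for Player~1's standard-model plays in the $\Gamma$-game, and this reduces entirely to the observation that $(\omega,\mathcal{P}(\omega))$ automatically satisfies $\Gamma$ by the very choice of $\Gamma$; the rest is a direct reading of the definitions.
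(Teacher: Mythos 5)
Your proof is correct and follows essentially the same route the paper intends: apply Proposition~\ref{gwgameprop} to extract a computable strategy winning in a bounded number of moves, observe that runs over the standard model (legal because $(\omega,\mathcal P(\omega))\vDash\Gamma$ and $\Delta^0_1$-definability over $\omega$ is relative computability) reproduce the ordinary game $G(\mathsf{RT}^n_k\rightarrow\mathsf{RT}^n_{<\infty})$, and then invoke the cited results of Hirschfeldt--Jockusch and Patey. The only cosmetic difference is that you restrict the unmodified game $G^{\Gamma}$ rather than the modified game $\widehat{G}^{\Gamma}$, where a standard run is literally a special case; Proposition~\ref{gwgameprop} makes the two interchangeable, so this changes nothing.
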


Kuyper~\cite{Kuyper} studied a notion closely related to this kind of
instance-counting (though he considered only the case where $\Gamma$
is $\mathsf{RCA}_0$). We give a slightly different definition that is
easily seen to be equivalent to his.

\begin{defn}
Let $\mathsf{P}$ and $\mathsf{Q}$ be $\Pi^1_2$-problems. Say that
$\mathsf{P}$ \emph{Weihrauch-reduces to the composition of $n$ many
copies of $\mathsf{Q}$ via $e_0,\ldots,e_n$} if for every
$X_0,\ldots,X_n$,
\begin{multline*}
 \text{ if }X_0\text{ is a } \mathsf{P}\text{-instance then }\\
  \Phi_{e_0}^{X_0} \text{ is a }\mathsf{Q}\text{-instance and if }X_1\text{ is a solution to }\Phi_{e_0}^{X_0}\text{ then }\\\Phi_{e_1}^{X_0 \oplus X_1}\text{ is a }\mathsf{Q}\text{-instance and if }X_2\text{ is a solution to }\Phi_{e_1}^{X_0 \oplus X_1}\text{ then }\\
                \vdots \\
\Phi_{e_{n-1}}^{X_0 \oplus \cdots \oplus X_{n-1}}\text{ is a
}\mathsf{Q}\text{-instance and if }X_n\text{ is a solution to
}\Phi_{e_{n-1}}^{X_0 \oplus \cdots \oplus X_{n-1}}\text{ then }\\
                 \Phi_{e_n}^{X_0 \oplus \cdots \oplus
                   X_n} \text{ is a solution to } X_0.
\end{multline*}
(Note that in the $n=0$ case, this statement becomes
\[\text{ if
}X_0\text{ is a } \mathsf{P}\text{-instance then } \Phi_{e_0}^{X_0}
\text{ is a solution to } X_0\text{.)}\]
\end{defn}

Kuyper considered the situation where there are $n \in \omega$ and
$e_0,\ldots,e_n \in \omega$ such that $\mathsf{RCA}_0$ proves that
$\mathsf{P}$ Weihrauch-reduces to the composition of $n$ many copies
of $\mathsf{Q}$ via $e_0,\ldots,e_n$. For a fixed $n$, it is not
difficult to see that this condition is equivalent to saying that
Player 2 has a a computable winning strategy for
$G^{\Gamma}(\mathsf{Q} \rightarrow \mathsf{P})$ that ensures victory
in exactly $n+1$ many moves, unless it wins earlier by playing an
instance of $\mathsf{Q}$ with no solution. One might think that this
is the same as saying that there is an $n$ such that $\mathsf{P}
\leq\sub{gW}\sup{$\mathsf{RCA}_0,n$} \mathsf{Q}$, and hence by
Proposition~\ref{gwgameprop} to $\mathsf{P}
\leq\sub{gW}\sup{$\mathsf{RCA}_0$} \mathsf{Q}$, but Remark~\ref{01rem}
applies here as well. The example given there shows that it is
possible to have $\mathsf{P} \leq\sub{gW}\sup{$\mathsf{RCA}_0,1$}
\mathsf{Q}$ but not have Kuyper's condition hold. However, Kuyper's
condition is equivalent to $\mathsf{P}
\leq\sub{gW}\sup{$\mathsf{RCA}_0$} \widehat{\mathsf{Q}}$ for the
modified problem $\widehat{\mathsf{Q}}$ defined in that remark, so we
we will express it in this form.

Kuyper~\cite{Kuyper} claimed that his condition is equivalent to a
form of intuitionistically provable
implication. Uftring~\cite{Uftring,Ufpaper} found a counterexample
that shows that Kuyper's argument is flawed. Kuyper
(see~\cite{Uftring,Ufpaper}) proposed fixing his proof by replacing
the condition $\mathsf{P} \leq\sub{gW}\sup{$\mathsf{RCA}_0$}
\widehat{\mathsf{Q}}$ with $\mathsf{P}
\leq\sub{gW}\sup{$\mathsf{RCA}_0+\mathsf{Q}$}
\widehat{\mathsf{Q}}$. Uftring's example shows that it is possible for
Player 2 to have a computable winning strategy for
$G^{\mathsf{RCA}_0+\mathsf{Q}}(\mathsf{Q} \rightarrow \mathsf{P})$ but
not for $G^{\mathsf{RCA}_0}(\mathsf{Q} \rightarrow \mathsf{P})$, in
contrast with the case for general winning strategies in
Proposition~\ref{auxprop}, so we present a version of it now. We will
give another example with the same properties in Section~\ref{exsec}.

\begin{exa}[Uftring~\cite{Uftring,Ufpaper}]
\label{uftex}
The proof of G\"odel's Incompleteness Theorem shows that there is a
primitive recursive predicate $G$ such that $G(n)$ holds for all $n
\in \omega$ but $\mathsf{RCA}_0$ cannot prove $\forall x\, G(x)$. For
$X \neq \emptyset$, write $\mu X$ for the least element of $X$. Let
\[
\mathsf{P} \equiv \forall X\,\exists Y\, \forall x\, G(x)
\] 
and
\[\mathsf{Q} \equiv \forall X\, [X \neq \emptyset \rightarrow \exists Y\,
G(\mu X)]. 
\]
In $G^{\mathsf{RCA}_0+\mathsf{Q}}(\mathsf{Q} \rightarrow \mathsf{P})$,
Player 1's first move $M$ and $X_0$ must be such that $M[X_0]$ is
consistent with $\mathsf{Q}$, so $M[X_0] \vDash \forall x\, G(x)$, and
hence Player 2 can play, say, $\emptyset$ on its first move and
win. In $G^{\mathsf{RCA}_0}(\mathsf{Q} \rightarrow \mathsf{P})$,
however, Player 1 can play an $M \vDash \neg \forall x\, G(x)$,
together with, say, $X_0=\emptyset$. Then this instance of $\mathsf{P}$
has no solution, so the only way for Player 2 to win is eventually to
play an $M$-instance of $\mathsf{Q}$ with no solution, that is, an $X$
such that $M \vDash \neg G(\mu X)$. 

For any model $M$ of $\Sigma^0_1$-$\mathsf{PA}$, we can consider a run in which
Player 1 plays $M$ and then keeps playing $\emptyset$ until Player 2
either declares victory or wins by playing an $M$-instance of
$\mathsf{Q}$ with no solution. (Notice that we can computably
determine if the latter case holds, since the condition $G(\mu X)$ is
computable.) If Player 2 has a computable winning
strategy for $G^{\mathsf{RCA}_0}(\mathsf{Q} \rightarrow \mathsf{P})$,
then there is a computable procedure that, over any model $M$ of
$\Sigma^0_1$-$\mathsf{PA}$, simulates the above run, making Player 2's moves
according to this procedure,
outputting $0$ if Player 2 declares victory, and outputting $\mu X$ if Player
2 plays the $M$-instance $X$ of $\mathsf{Q}$ with no solution. The
output of this procedure is $0$ if{}f $M \vDash \forall x\,
G(x)$. Since this procedure works for any model $M$ of
$\Sigma^0_1$-$\mathsf{PA}$, we have an existential first-order sentence that is
provably equivalent to $\forall x\, G(x)$ over $\mathsf{RCA}_0$, which
is a contradiction, because any existential first-order sentence true
in the standard natural numbers is provable in $\mathsf{RCA}_0$.
\end{exa}

For some $\Pi^1_2$-problems $\mathsf{Q}$, on the other hand, there is
no difference between $G^{\mathsf{RCA}_0+\mathsf{Q}}(\mathsf{Q}
\rightarrow \mathsf{P})$ and $G^{\mathsf{RCA}_0}(\mathsf{Q}
\rightarrow \mathsf{P})$ because every countable model of
$\mathsf{RCA}_0$ can be extended to a countable model of
$\mathsf{RCA}_0+\mathsf{Q}$ with the same first-order part, and hence
the notion of consistency used in Definition~\ref{redgamedef} is the
same for $\mathsf{RCA}_0$ and $\mathsf{RCA}_0+\mathsf{Q}$. (Showing
that this is the case for a given $\mathsf{Q}$ is typically done to
show that $\mathsf{Q}$ is $\Pi^1_1$-conservative over
$\mathsf{RCA}_0$.) Examples include $\textsf{WKL}$, as shown by
Harrington (see~\cite[Theorem IX.2.1]{Simpson}), $\mathsf{COH}$, as
shown by Cholak, Jockusch, and Slaman~\cite{CJS}, and $\mathsf{AMT}$,
as shown by Hirschfeldt, Shore, and Slaman~\cite{HSS}.

As highlighted by the work of Kuyper and Uftring, the connections with
intuitionistic provability are rather subtle, and we believe that
generalized Weihrauch reducibility over possibly nonstandard models
can be useful in clarifying them. However, as the methods and issues
are rather different from the ones in this paper, we leave this work
to a future one.

\section{Single-instance reductions}
\label{singlesec}

As noted in Remark~\ref{01rem}, $\mathsf{P} \leq\sub{c} \mathsf{Q}$
if{}f Player 2 has a strategy for $G(\mathsf{Q} \rightarrow
\mathsf{P})$ that ensures victory in exactly two moves. Similarly,
$\mathsf{P} \leq\sub{W} \mathsf{Q}$ if{}f Player 2 has a computable
strategy for $G(\mathsf{Q} \rightarrow \mathsf{P})$ that ensures
victory in exactly two moves. We can define the analogous notions for
games over possibly nonstandard models. Let us explicitly define these
analogs for computable and Weihrauch reducibilities, and then look at
several examples involving them. Although we will not work with them
in this paper, we also define the analogs of several related notions
of computability-theoretic reduction between $\Pi^1_2$-problems.

\begin{defn}
Let $\Gamma$ be a set of $L_2$-formulas consistent with
$\Delta^0_1$-{\penalty0\hskip0pt\relax}comprehension that proves the
existence of a universal $\Sigma^0_1$ formula and let $\mathsf{P}$ and
$\mathsf{Q}$ be $\Pi^1_2$-problems.
\begin{enumerate}

\item We say that $\mathsf{P}$ is \emph{computably reducible over
$\Gamma$} to $\mathsf{Q}$, and write $\mathsf{P}
\leq\sub{c}\sup{$\Gamma$} \mathsf{Q}$, if for every model $(M,S)$ of
$\Gamma$ with $M$ countable and $S$ closed under
$\Delta^0_1$-comprehension, and every $M$-instance $X$ of $\mathsf{P}$
in $S$, there is an $M$-instance $\widehat{X}$ of $\mathsf{Q}$ in
$M[X]$ such that for every solution $\widehat{Y}$ to $\widehat{X}$ in
$S$, there is a solution to $X$ in $M[X,\widehat{Y}]$.

\item  We say that $\mathsf{P}$ is \emph{Weihrauch reducible over
$\Gamma$} to $\mathsf{Q}$, and write $\mathsf{P}
\leq\sub{W}\sup{$\Gamma$} \mathsf{Q}$, if there are $e,i \in \omega$
such that for every model $(M,S)$ of $\Gamma$ with $M$ countable and
$S$ closed under $\Delta^0_1$-comprehension, and every $M$-instance 
$X$ of $\mathsf{P}$ in $S$, the set $\widehat{X}=\Phi_e^X$ is an
$M$-instance of $\mathsf{Q}$, and for every solution $\widehat{Y}$
to $\widehat{X}$ in $S$, the set $\Phi_i^{X \oplus \widehat{Y}}$ is
a solution to $X$.

\item We say that $\mathsf{P}$ is \emph{strongly computably reducible
over $\Gamma$} to $\mathsf{Q}$, and write $\mathsf{P}
\leq\sub{sc}\sup{$\Gamma$} \mathsf{Q}$, if for every model $(M,S)$ of
$\Gamma$ with $M$ countable and $S$ closed under
$\Delta^0_1$-comprehension, and every $M$-instance $X$ of $\mathsf{P}$
in $S$, there is an $M$-instance $\widehat{X}$ of $\mathsf{Q}$ in
$M[X]$ such that for every solution $\widehat{Y}$ to $\widehat{X}$ in
$S$, there is a solution to $X$ in $M[\widehat{Y}]$.

\item  We say that $\mathsf{P}$ is \emph{strongly Weihrauch reducible
over $\Gamma$} to $\mathsf{Q}$, and write $\mathsf{P}
\leq\sub{sW}\sup{$\Gamma$} \mathsf{Q}$, if there are $e,i \in \omega$
such that for every model $(M,S)$ of $\Gamma$ with $M$ countable and
$S$ closed under $\Delta^0_1$-comprehension, and every $M$-instance
$X$ of $\mathsf{P}$ in $S$, the set $\widehat{X}=\Phi_e^X$ is an
$M$-instance of $\mathsf{Q}$, and for every solution $\widehat{Y}$ to
$\widehat{X}$ in $S$, the set $\Phi_i^{\widehat{Y}}$ is a solution to
$X$.

\item We say that $\mathsf{P}$ is \emph{omnisciently computably
reducible over $\Gamma$} to $\mathsf{Q}$, and write $\mathsf{P}
\leq\sub{oc}\sup{$\Gamma$} \mathsf{Q}$, if for every model $(M,S)$ of
$\Gamma$ with $M$ countable and $S$ closed under
$\Delta^0_1$-comprehension, and every $M$-instance $X$ of $\mathsf{P}$
in $S$, there is an $M$-instance $\widehat{X}$ of $\mathsf{Q}$ in $S$
such that for every solution $\widehat{Y}$  to $\widehat{X}$ in $S$,
there is a solution to $X$ in $M[X,\widehat{Y}]$.

\item We say that $\mathsf{P}$ is \emph{omnisciently Weihrauch
reducible over $\Gamma$} to $\mathsf{Q}$, and write $\mathsf{P}
\leq\sub{oW}\sup{$\Gamma$} \mathsf{Q}$, if there is an $i \in \omega$
such that for every model $(M,S)$ of $\Gamma$ with $M$ countable and
$S$ closed under $\Delta^0_1$-comprehension, and every $M$-instance
$X$ of $\mathsf{P}$ in $S$, there is an $M$-instance $\widehat{X}$ of
$\mathsf{Q}$ in $S$ such that for every solution $\widehat{Y}$ to
$\widehat{X}$ in $S$, the set $\Phi_i^{X \oplus \widehat{Y}}$ is a
solution to $X$.

\item We say that $\mathsf{P}$ is \emph{strongly omnisciently
computably reducible over $\Gamma$} to $\mathsf{Q}$, and write
$\mathsf{P} \leq\sub{soc}\sup{$\Gamma$} \mathsf{Q}$, if for every
model $(M,S)$ of $\Gamma$ with $M$ countable and $S$ closed under
$\Delta^0_1$-comprehension, and every $M$-instance $X$ of $\mathsf{P}$ 
in $S$, there is an $M$-instance $\widehat{X}$ of $\mathsf{Q}$ in
$S$ such that for every solution $\widehat{Y}$ to $\widehat{X}$ in
$S$, there is a solution to $X$ in $M[\widehat{Y}]$.

\item We say that $\mathsf{P}$ is \emph{strongly omnisciently
Weihrauch reducible over $\Gamma$} to $\mathsf{Q}$, and write
$\mathsf{P} \leq\sub{soW}\sup{$\Gamma$} \mathsf{Q}$, if there is an $i
\in \omega$ such that for every model $(M,S)$ of $\Gamma$ with $M$
countable and $S$ closed under $\Delta^0_1$-comprehension, and every
$M$-instance $X$ of $\mathsf{P}$ in $S$, there is an $M$-instance
$\widehat{X}$ of $\mathsf{Q}$ in $S$ such that for every solution
$\widehat{Y}$ to $\widehat{X}$ in $S$, the set
$\Phi_i^{\widehat{Y}}$ is a solution to $X$.

\end{enumerate}
\end{defn}

\begin{rem}
In light of comments made above, it might be more natural to consider
versions corresponding to games in which Player 2 can always win in
one or two moves, rather than exactly two moves (even if in natural
cases, there will be no difference). Rather than introduce more
terminology and notation, however, that can be done simply by
replacing $\mathsf{Q}$ with the problem $\widehat{\mathsf{Q}}$ from
Remark~\ref{01rem} in the above definitions.
\end{rem}

The study of Weihrauch reducibility in this extended setting seems
particularly promising, given the extensive theory that has been
developed for Weihrauch reducibility over the standard natural
numbers. In particular, there are several operators on the Weihrauch
degrees whose analogs in this setting should be of interest. One
example is the finite parallelization: For a problem $\mathsf{P}$, the
problem $\mathsf{P}^*$ is the one whose instances consist of finitely
many instances $X_0,\ldots,X_k$ of $\mathsf{P}$, with a solution
consisting of one solution to each $X_i$. Clearly, $\mathsf{P}^*
\leq\sub{gW} \mathsf{P}$ for any $\Pi^1_2$-problem $\mathsf{P}$, but
this fact does not hold in our setting, because given an instance
$X_0,\ldots,X_k$ of $\mathsf{P}^*$, the obvious reduction strategy for
Player 2 takes $k+1$ many moves, and $k$ might be nonstandard. The
following example will be relevant in the next section.

\begin{exa}
\label{boundex}
Pauly, Fouch\'e, and Davie~\cite{PFD} defined $\mathsf{Bound}$ as
follows: An instance is an enumeration of a bounded set $F$, and a
solution is a bound on the elements of $F$. An instance of
$\mathsf{Bound}^*$ is then a simultaneous enumeration of a finite
family $F_0,\ldots,F_k$ of bounded sets, and a solution to this
instance consists of a bound for each $F_i$, or, equivalently, a bound
$b$ on $\bigcup_{i \leq k} F_k$. (This is basically the principle
$\mathsf{FUF}$ studied by Frittaion and Marcone \cite{FM}.) It is easy
to see that $\mathsf{Bound}$ and $\mathsf{Bound}^*$ are
Weihrauch-equivalent, but that is no longer the case for
Weihrauch-equivalence (or even provable equivalence) over
$\mathsf{RCA}_0$, since as statements in second-order arithmetic,
$\mathsf{Bound}$ is trivially true, while $\mathsf{Bound}^*$ is a way
to state $\mathsf{B}\Pi^0_1$, and hence is equivalent to
$\mathsf{B}\Sigma^0_2$ over $\mathsf{RCA}_0$, as we
further discuss in the following section. Thus $\mathsf{RCA}_0 \nvdash
\mathsf{Bound} \rightarrow \mathsf{Bound}^*$, and hence
$\mathsf{Bound}^* \nleq\sub{gW}\sup{$\mathsf{RCA}_0$} \mathsf{Bound}$.
\end{exa}

It is not clear what the correct generalization of the ${}^\diamond$
operator of Neumann and Pauly~\cite{NeuPau} to this setting
is. However, one would expect that it would still have the property
that $\mathsf{P}^*$ is reducible to $\mathsf{P}^\diamond$, and hence,
by the above example, that it would no longer be equivalent to
gW-reducibility.

On the other hand, it is clear that, as for standard Weihrauch
reducibility, if $\mathsf{P} \leq\sub{W}\sup{$\mathsf{RCA}_0$}
\mathsf{Q}$ then $\mathsf{P}^* \leq\sub{W}\sup{$\mathsf{RCA}_0$}
\mathsf{Q}^*$. It is also not difficult to see that, more generally,
if $\mathsf{P} \leq\sub{gW}\sup{$\mathsf{RCA}_0,n$} \mathsf{Q}$ then
$\mathsf{P}^* \leq\sub{gW}\sup{$\mathsf{RCA}_0,n$} \mathsf{Q}^*$. Thus,
by Proposition~\ref{gwgameprop}, if $\mathsf{P}
\leq\sub{gW}\sup{$\mathsf{RCA}_0$} \mathsf{Q}$ then $\mathsf{P}^*
\leq\sub{gW}\sup{$\mathsf{RCA}_0$} \mathsf{Q}^*$. (The same holds for
other appropriate systems in place of $\mathsf{RCA}_0$, of course.)

An important point here is that while the principles we consider in
reverse mathematics are typically true---in the sense that they hold
in $(\omega,\mathcal P(\omega))$, or equivalently for
$\Pi^1_2$-problems, that every instance (over the standard natural
numbers) has at least one solution---many of them have nontrivial
first-order parts. For example, if $\mathsf{B}\Sigma^0_2$ fails in
$M$, then $M$ cannot be the first-order part of a model of
$\mathsf{RCA}_0 + \mathsf{RT}^1_{<\infty}$ (or of $\mathsf{RCA}_0 +
\mathsf{RT}^n_k$ for any $n,k \geq 2$). Furthermore, for any such $M$
there is an instance of $\mathsf{RT}^1_{<\infty}$ (i.e., a $k \in |M|$
together with a function $c : |M| \rightarrow \{j \in |M| : j <^M
k\}$) with no solutions. The same is true of $\mathsf{Bound}^*$, to
give another example.

We want to use notions such as Weihrauch reducibility over
$\mathsf{RCA}_0$ and other systems to study these kinds of principles
(as we will do in the next two sections), so it is important that our
definitions above do not assume that every instance of a problem has a
solution. This fact is particularly worth noting for Weihrauch
reducibility, because we usually think of (classical) Weihrauch
reducibility between $\Pi^1_2$-problems as a special case of the
general notion from computable analysis, which is defined using
partial multifunctions between represented spaces. (See for
instance Brattka, Gherardi, and Pauly~\cite{BGP} or Brattka and
Pauly~\cite{BP}.) This point is a bit subtle, and was missed, e.g., in
the paper Dorais, Dzhafarov, Hirst, Mileti, and Shafer \cite{DDHMS},
where a proof is given in Corollaries A.3 and A.4 establishing a
correspondence between $\Pi^1_2$ principles on the one hand and
certain classes of partial multifunctions on the other. Indeed, the
proof there works only if the $\Pi^1_2$ principles in question are
assumed to be true, which is not explicitly mentioned.

There is more than one way to formalize the notion of a partial
multifunction between spaces $X$ and $Y$. One is to say that it is
simply a relation $R \subseteq X \times Y$. Then the domain of the
multifunction is $\{x \in X : \exists y\, (x,y) \in R\}$. Another is
to say that it is a (possibly partial) function from $X$ to the power
set of $Y$. In this case, the domain of the multifunction can include
elements that are mapped to no values at all. The first formalization
is the one normally used in the definition of Weihrauch reducibility
in computable analysis, which is convenient in particular because of
the need to use choice functions in working with represented
spaces. And indeed, a true $\Pi^1_2$-problem $\mathsf{P}$ corresponds
to the partial multifunction $F : \mathord{\subseteq} 2^\omega
\rightrightarrows 2^\omega$ in this sense whose domain is the set of
instances of $\mathsf{P}$, and which maps any such instance $X$ to the
solutions to $X$.

This correspondence breaks down for a $\Pi^1_2$-problem that has
instances with no solutions, however, unless we move to the second
formalization of the notion of multifunction, or allow a multifunction
to consist of a relation $R \subseteq X \times Y$ together with a set
$D$ such that $\{x \in X : \exists y\, (x,y) \in R\} \subseteq D
\subseteq X$, where $D$ represents the domain of the function.  This
distinction operates even at the level of the Weihrauch degrees
(equivalence classes under Weihrauch reducibility), because a problem
in which some instance has no solutions can never be Weihrauch
reducible to one in which every instance has a solution, and if
$\mathsf{P}$ has a computable instance with no solutions, then every
problem is Weihrauch reducible to $\mathsf{P}$. As discussed
in~\cite{BGP}, and in more detail in~\cite{BP}, this top degree is
usually added to the lattice of Weihrauch degrees as a formal object.

The distinction between the two approaches is also relevant to the
notion of \emph{extended Weihrauch reducibility} investigated by Bauer
\cite{B} (see also \cite{B-talk}), following work by Bauer and
Yoshimura \cite{BY-CCA,BY-CCC}. The focus in that work is on comparing
universally quantified statements in the setting of constructive
mathematics, using a notion called \emph{instance reducibility}, which
can also be understood as an extension of the Weihrauch degrees that
in particular allows for ``questions that do not have an answer'' but
that are still ``valid'' for the purposes of considering whether or
not they are reducible to other questions (Bauer \cite{Bauer-slides}).

\section{Limit-homogeneous sets}
\label{exsec}

In this section and the next, we give some examples of comparisons of
$\Pi^1_2$-problems using W- and gW-reducibility over possibly
nonstandard models, focusing on versions of $\mathsf{B}\Sigma^0_2$. A
natural way to think of $\mathsf{B}\Pi^0_1$ as a $\Pi^1_2$-problem is
to identify a $\Pi^0_1$ formula $\phi(i,k)$ with a simultaneous
enumeration of the sets $\{m : \forall k<m\, \neg\phi(i,k)\}$ for
$i<n$. Then a $b$ as in the definition of $\mathsf{B}\Pi^0_1$ is the
same as a common bound for these sets. Thus we arrive at
$\mathsf{Bound}^*$, as defined in Example~\ref{boundex}.

Recall also the $\Pi^1_2$-problems $\mathsf{SRT}^2_2$ and
$\mathsf{D}^2_2$ from Definition~\ref{rtdef}. Clearly,
$\mathsf{SRT}^2_2$ implies $\mathsf{D}^2_2$. Cholak, Jockusch, and
Slaman~\cite{CJS} claimed that the converse implication also holds
over $\mathsf{RCA}_0$, but their proof actually required
$\mathsf{B}\Sigma^0_2$. Chong, Lempp, and Yang~\cite{CLY} closed this
gap by showing that $\mathsf{D}^2_2$ implies $\mathsf{B}\Sigma^0_2$
over $\mathsf{RCA}_0$.

The argument in~\cite{CJS} also shows that $\mathsf{SRT}^2_2
\leq\sub{c} \mathsf{D}^2_2$. Dzhafarov~\cite{Dzhafarov2} and Brattka
and Rakotoniaina~\cite{BraRak} showed that $\mathsf{SRT}^2_2
\nleq\sub{W} \mathsf{D}^2_2$. Hirschfeldt and Jockusch~\cite{HirJoc}
noted that $\mathsf{SRT}^2_2 \leq\sub{gW}^2 \mathsf{D}^2_2$,
however. To consider this reduction in more detail, we define the
following $\Pi^1_2$-problem.

\begin{defn} 
$\mathsf{LH}$: If $c : [\mathbb N]^2 \rightarrow 2$ is such that
$\lim_y c(x,y)=1$ for all $x$, then $c$ has an infinite homogeneous
set.
\end{defn}

\noindent This problem is a convenient way to state the principle that
for every $2$-coloring of pairs, every infinite limit-homogeneous set
has an infinite homogeneous subset.

From the reverse-mathematical perspective, $\mathsf{LH}$ is equivalent
to $\mathsf{B}\Sigma^0_2$.

\begin{prop}
\label{lhprop}
$\mathsf{RCA}_0 \vdash \mathsf{LH} \leftrightarrow
\mathsf{B}\Sigma^0_2$.
\end{prop}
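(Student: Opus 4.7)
The plan is to establish both directions of the equivalence using standard techniques. For $\mathsf{B}\Sigma^0_2 \to \mathsf{LH}$, I would build the homogeneous set by recursion, using $\mathsf{B}\Sigma^0_2$ at each stage to uniformly bound the stabilization thresholds of the relevant columns. For $\mathsf{LH} \to \mathsf{B}\Sigma^0_2$, I would appeal to Hirst's theorem $\mathsf{RCA}_0 \vdash \mathsf{RT}^1_{<\infty} \leftrightarrow \mathsf{B}\Sigma^0_2$ (already cited in the paper) and reduce to establishing $\mathsf{RT}^1_{<\infty}$.

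For the forward direction, let $c : [\mathbb N]^2 \rightarrow 2$ satisfy $\lim_y c(x,y) = 1$ for every $x$. I would set $h_0 = 0$ and, given $h_0 < \cdots < h_n$, define $h_{n+1}$ to be the least $y > h_n$ with $c(h_i, y) = 1$ for every $i \leq n$. The key step is showing such a $y$ exists. For each $i \leq n$, the formula $\phi(i, N) \equiv \forall y > N\,[c(h_i, y) = 1]$ is $\Pi^0_1$, so $\exists N\, \phi(i, N)$ is $\Sigma^0_2$ and holds by stability. An application of $\mathsf{B}\Sigma^0_2$ on the bounded range $i \leq n$ yields a uniform $b$ with $c(h_i, y) = 1$ for all $i \leq n$ and all $y > b$, so any $y > \max(h_n, b)$ witnesses existence. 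The function $n \mapsto h_n$ is then total by $\Sigma^0_1$-induction on the recursion clause, giving a $\Delta^0_1$-definition of the desired infinite set $H = \{h_n : n \in \mathbb N\}$, homogeneous in color~$1$.

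For the reverse direction, given $k$ and $f : \mathbb N \rightarrow k$, I would seek an infinite monochromatic set, arguing by contradiction from the assumption that every color class $\{x : f(x) = i\}$ is bounded. Define $c(x, y) = 1$ if $f(x) \neq f(y)$ and $c(x,y) = 0$ otherwise. For each $x$, the set $\{y : c(x, y) = 0\}$ is contained in the (bounded) color class of $f(x)$, so $\lim_y c(x, y) = 1$. Applying $\mathsf{LH}$ yields an infinite homogeneous $H$, whose homogeneous color must be $1$: if it were $0$, then for any $x \in H$ the color class of $f(x)$ would contain $\{y \in H : y > x\}$ and be unbounded. Hence $f$ is injective on $H$, but $H$ is unbounded while $k$ is fixed, so selecting $k+1$ elements of $H$ contradicts the finite pigeonhole principle (provable in $\mathsf{RCA}_0$). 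This gives $\mathsf{RT}^1_{<\infty}$, and hence $\mathsf{B}\Sigma^0_2$ via Hirst.

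The main subtle point will be verifying the recursion in the forward direction within $\mathsf{RCA}_0 + \mathsf{B}\Sigma^0_2$: although each application of $\mathsf{B}\Sigma^0_2$ has parameters $h_0, \dots, h_n$ chosen at earlier stages, totality of $n \mapsto h_n$ follows by $\Sigma^0_1$-induction on the clause "$h_{n+1}$ is defined". Once this is in place, both directions reduce to routine bookkeeping.
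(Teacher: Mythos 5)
Your proposal is correct and follows essentially the same route as the paper: the forward direction uses $\mathsf{B}\Pi^0_1$ to uniformly bound the stabilization points and then builds the homogeneous set by primitive recursion, and the reverse direction derives $\mathsf{RT}^1_{<\infty}$ by coloring pairs according to whether $f$ agrees on them and invoking the finite pigeonhole principle, then appeals to Hirst's equivalence. The only cosmetic difference is that you make explicit the verification that the homogeneous color must be $1$ and the $\Sigma^0_1$-induction justifying totality of the recursion, both of which the paper leaves implicit.
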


\begin{proof}
First, assume $\mathsf{B}\Sigma^0_2$. Fix an instance $c$ of
$\mathsf{LH}$. Let $S$ be the set of all tuples
$(x_0,\ldots,x_{n-1},y)$ such that $x_0 < \cdots < x_{n-1} < y$ and
$c(x_m,y) = 1$ for all $m < n$. We claim that for all $x_0 < \cdots <
x_{n-1}$, there is a $y$ such that $(x_0,\ldots,x_{n-1},y) \in S$. For
each $m < n$ there is a $b_m > x_{n-1}$ such that $c(x_m,y) = 1$ for
all $y > b_m$. By $\mathsf{B}\Sigma^0_2$ (or really
$\mathsf{B}\Pi^0_1$), there is a $b > x_{n-1}$ such that $c(x_m,y) =
1$ for all $m < n$ and $y > b$. Then $(x_0,\ldots,x_{n-1},b+1) \in S$,
which proves our claim. Now we can define a homogeneous set $H$ for
$c$ by primitive recursion: Let $h_0=0$, let $h_{n+1}$ be the least
$y$ such that $(h_0,\ldots,h_n,y) \in S$, and let
$H=\{h_0,h_1,\ldots\}$.

Now assume $\mathsf{LH}$. We prove $\mathsf{RT}^1_{<\infty}$. Assume
for a contradiction that $d : \mathbb N \rightarrow k$ has no infinite
homogeneous set. Then for each $i < k$ there is a $b$ such that $d(x)
\neq i$ for all $x > b$. Define $c : [\mathbb N]^2 \rightarrow 2$ by
letting $c(x,y)=0$ if $d(x)=d(y)$ and letting $c(x,y)=1$
otherwise. Then $\lim_y c(x,y) = 1$ for all $x$, so by $\mathsf{LH}$,
$c$ has an infinite homogeneous set $H$. Let $x_0 < \cdots < x_k \in
H$. Then for all $m < n \leq k$, we have that $c(x_m,x_n) = 1$ and
hence $d(x_m) \neq d(x_n)$. But then $\{d(x_0),\ldots,d(x_k)\}$ has
cardinality $k+1$, which is impossible.
\end{proof}

However, the first part of the above proof shows that $\mathsf{LH}$ is
computability-theoretically trivial, and indeed uniformly
computably true, so that $\mathsf{LH} \leq\sub{gW}^0 \mathsf{P}$ for
any $\mathsf{P}$, or equivalently $\mathsf{LH} \leq\sub{W}
\mathsf{1}$, where $\mathsf{1}$ is the identity problem for which an
instance is any $X$ and the only solution to this instance is $X$
itself. We can obtain $\mathsf{SRT}^2_2$ from
$\mathsf{D}^2_2$ as follows: Given a stable coloring $c : \mathbb{N}
\rightarrow 2$, use $\mathsf{D}^2_2$ to obtain a limit-homogeneous set
$L$. Now an application of $\mathsf{RT}^1_2$ (which is
Weihrauch-reducible to $\mathsf{D}^2_2$) yields an $i$ such that
$\lim_{y \in L} c(x,y)=i$ for all $x \in L$. We can think of $c$
restricted to $L$ as a coloring of $\mathbb N$ by identifying the
$n$th element of $L$ with $n$. If $i=0$, we can also replace $c$ by
the coloring whose value at $(x,y)$ is $1-c(x,y)$. We can then apply
$\mathsf{LH}$ to obtain an infinite homogeneous set for $c$. Since
$\mathsf{LH}$ is Weihrauch-trivial, this procedure shows that
$\mathsf{SRT}^2_2 \leq\sub{gW}^2 \mathsf{D}^2_2$. (Since the use of
$\mathsf{RT}^1_2$ is computably trivial, it also shows that
$\mathsf{SRT}^2_2 \leq\sub{c} \mathsf{D}^2_2$, as mentioned above.)

Over nonstandard models, however, things are different. In the
presence of $\mathsf{B}\Sigma^0_2$, the first part of the proof of
Proposition~\ref{lhprop} shows that $\mathsf{LH}$ is still
Weihrauch-trivial, i.e., $\mathsf{LH}
\leq\sub{W}\sup{$\mathsf{RCA}_0+\mathsf{B}\Sigma^0_2$} \mathsf{1}$,
and hence $\mathsf{SRT}^2_2
\leq\sub{gW}\sup{$\mathsf{RCA}_0+\mathsf{B}\Sigma^0_2,\, 2$}
\mathsf{D}^2_2$. Of course, if $\mathsf{P}$ does not imply
$\mathsf{B}\Sigma^0_2$ over $\mathsf{RCA}_0$, then we cannot have
$\mathsf{LH} \leq\sub{gW}\sup{$\mathsf{RCA}_0$} \mathsf{P}$. But what
if we take $\mathsf{P}$ to be some form of $\mathsf{B}\Sigma^0_2$? A
natural choice is $\mathsf{Bound}^*$, as it is essentially the form of
$\mathsf{B}\Pi^0_1$ used in the first part of the proof of
Proposition~\ref{lhprop}.

We will show that $\mathsf{LH} \nleq\sub{gW}\sup{$\mathsf{RCA}_0$}
\mathsf{Bound}^*$, but we can actually obtain a stronger result by
considering the contrapositive form of $\mathsf{B}\Pi^0_1$: Given a
simultaneous enumeration of sets $F_0,\ldots,F_{n-1}$ with no common
bound, there is an $i<n$ such that $F_i$ is infinite. Given such an
enumeration, we can define an $n$-coloring $c$ of $\mathbb N$ as
follows: for each $m$, wait until a number greater than $m$ is
enumerated into some $F_i$, then give $m$ the color $i$. From an
infinite homogeneous set for $c$, we can obtain an $i<n$ such that
$F_i$ is infinite. Conversely, given an $n$-coloring $c$ of $\mathbb
N$, the sets $F_i=\{m : c(m)=i\}$ for $i<n$ have no common bound, and
from an $i<n$ such that $F_i$ is infinite, we can obtain an infinite
homogeneous set for $c$. Both of these processes can be carried out
over $\mathsf{RCA}_0$, so up to Weihrauch equivalence over
$\mathsf{RCA}_0$, the contrapositive form of $\mathsf{B}\Pi^0_1$ is
$\mathsf{RT}^1_{<\infty}$, in the form in which it is usually stated
as a $\Pi^1_2$-problem, in which an instance consists of a
$k$-coloring of $\mathbb N$ together with the number $k$.

\begin{rem}
The above argument (which we heard from Pauly [personal
communication]) also gives a simple proof of Hirst's result
from~\cite{Hirst} (see also~\cite[Theorem 6.81]{Hirschfeldt}) that
$\mathsf{B}\Sigma^0_2$ and $\mathsf{RT}^1_{<\infty}$ are equivalent
over $\mathsf{RCA}_0$.
\end{rem}

There is a stronger form of $\mathsf{RT}^1_{<\infty}$, which we will
call $\mathsf{stRT}^1_{<\infty}$, in which the number of colors is not
part of the instance. That is, an instance consists of a function
$\mathbb N \rightarrow \mathbb N$ with bounded range (and a solution
is still an infinite homogeneous set). As shown by Brattka and
Rakotoniaina~\cite{BraRak}, and also noted by Hirschfeldt and
Jockusch~\cite{HirJoc}, $\mathsf{RT}^1_{<\infty} <\sub{W}
\mathsf{stRT}^1_{<\infty}$.  In this section, we show that
$\mathsf{LH} \nleq\sub{gW}\sup{$\mathsf{RCA}_0$}
\mathsf{stRT}^1_{<\infty}$.  We will show in
Proposition~\ref{boundandstrt} that $\mathsf{Bound}^*
\leq\sub{gW}\sup{$\mathsf{RCA}_0$} \mathsf{stRT}^1_{<\infty}$, so this
result implies that $\mathsf{LH} \nleq\sub{gW}\sup{$\mathsf{RCA}_0$}
\mathsf{Bound}^*$, but we also give a direct proof of the latter fact,
which uses the same technique but is simpler.

Both proofs will use the following notion of forcing.

\begin{defn}
\label{notion}
Let $N$ be an $L_1$-structure. We define a notion of forcing $P_N$ as
follows. (If $N$ is the standard natural numbers then we denote this
notion by $P_\omega$.) Write $[m]^2$ for the set of $(x,y) \in
[|N|]^2$ such that $x,y <^N m$. A condition is an $N$-finite function
of the form $p : [m]^2 \rightarrow 2$ for some $m \in |N|$. Say that a
condition $q$ extends such a $p$ if $q$ extends $p$ as a function and
$q(i,j)=1$ for all $i <^N m$ and $j \geq^N m$ on which it is
defined. Define the notion of $c : [|N|]^2 \rightarrow 2$ extending
$p$ in the same way. (Notice that if for every $m \in |N|$ there is a
condition $p : [m]^2 \rightarrow 2$ such that $c$ extends $p$, then
$c$ is an $N$-instance of $\mathsf{LH}$.)
\end{defn}

We will also use the following fact. (A \emph{$1$-elementary
extension} of a structure $N$ is an extension of $N$ that satisfies
exactly the same existential sentences with parameters from $N$.)

\begin{lem}
\label{extlem}
There is a $1$-elementary extension $M$ of the standard natural
numbers such that for the collection $S$ of all subsets of $|M|$ that
are $\Delta^0_1$-definable over $M$,
\begin{enumerate}

\item $(M,S)$ is a model of $\mathsf{RCA}_0$ and 

\item for any condition $p$ for the notion of forcing $P_M$, there is
an $M$-instance of $\mathsf{LH}$ in $S$ that extends $p$ (in the
sense of Definition~\ref{notion}) and has no solution in $S$.

\end{enumerate}
\end{lem}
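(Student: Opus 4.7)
The plan is to find a single ``master'' coloring $c$ in $S$ that is an $M$-instance of $\mathsf{LH}$ with no infinite homogeneous set in $S$; then the forcing requirement for arbitrary $p \in P_M$ will follow by a simple modification. The model $M$ will be produced by lifting a Jockusch-style computable coloring to a countable elementary extension of $\omega$.

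\textbf{Reduction to a single $c$.} Given such a master $c$, for any condition $p : [m]^2 \to 2$ in $P_M$, define $c_p$ to agree with $p$ on $[m]^2$, to equal $1$ on $\{(i,j) : i <^M m \leq^M j\}$, and to equal $c(i,j)$ on $\{(i,j) : m \leq^M i < j\}$. Then $c_p$ is $\Delta^0_1$-definable from $c$, $p$, and $m$, so $c_p \in S$; it extends $p$ in the sense of Definition~\ref{notion}; and any infinite $H \in S$ homogeneous for $c_p$ must have color $1$ (since $\lim_y c_p(x,y) = 1$), whence $H \cap \{x : x \geq^M m\}$ is an infinite homogeneous set in $S$ for $c$, contradicting the choice of $c$.

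\textbf{Jockusch's coloring and construction of $M$.} The key computability-theoretic input is that there is a computable stable coloring $c_\omega : [\omega]^2 \to 2$ with $\lim_y c_\omega(x,y) = 1$ for every $x$ and no infinite c.e.\ homogeneous set. (Any infinite homogeneous set must have color $1$, so this says no infinite c.e.\ $W$ satisfies $c_\omega(x,y) = 1$ for all $x < y$ in $W$.) This comes from a standard finite-extension diagonalization: for each $x$ reserve a threshold $\ell(x)$ beyond which $c_\omega(x,\cdot)$ is committed to $1$, and for each c.e.\ index $i$ wait for elements $x < y < \ell(x)$ of $W_i$ and set $c_\omega(x,y) = 0$. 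Now let $e$ be an index for $c_\omega$, expand $L_1$ with a constant $\bar n$ for each $n \in \omega$ and a new constant $\hat c$, and take a countable elementary extension $(M,\hat c^M)$ of $(\omega,e)$ in this expanded language. Define $c := \Phi_{\hat c^M}^M$; it is $\Delta^0_1$-definable over $M$ with the single parameter $\hat c^M \in |M|$, hence in $S$. Since $(\omega,e) \models \mathsf{PA}$, so does $M$, giving $M \models \mathsf{I}\Sigma^0_1$ and $(M,S) \models \mathsf{RCA}_0$, and $M$ is elementary (hence $1$-elementary) over $\omega$.

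\textbf{Verification and main obstacle.} For each $\Sigma^0_1$-formula $\phi(x;\bar y)$, the sentence ``$\forall \bar y\,[\{x : \phi(x;\bar y)\}$ is unbounded $\to \exists x < y$ in that set with $\Phi_{\hat c}(x,y) = 0]$'' holds in $(\omega,e)$, because for each standard $\bar y$ the defined set is c.e.\ and the Jockusch fact supplies the $0$-pair whenever it is infinite. By elementarity this sentence lifts to $M$ for all parameters in $|M|$, as does the statement that $c$ is stable with every limit equal to $1$. Hence $c$ is an $M$-instance of $\mathsf{LH}$, and every infinite $\Delta^0_1$-definable set in $|M|$ (which is in particular $\Sigma^0_1$-definable) contains a $0$-pair for $c$, so is not homogeneous. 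The main subtlety is that the diagonalization must cover $\Delta^0_1$-definable sets with \emph{arbitrary}, possibly non-standard, parameters from $|M|$, not merely standard ones; this is exactly what the universally-quantified schema secures once lifted from $(\omega,e)$ to $M$ by elementarity.
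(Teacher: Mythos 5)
Your reduction of the lemma to a single master instance $c$ (patching an arbitrary condition $p$ on top of $c$) is exactly the paper's argument and is fine. The construction of $c$ itself, however, rests on a false computability-theoretic claim. There is no computable coloring $c_\omega : [\omega]^2 \to 2$ with $\lim_y c_\omega(x,y) = 1$ for every $x$ and no infinite c.e.\ (or even computable) homogeneous set: since $\mathsf{B}\Pi^0_1$ is true over $\omega$, for any finite $h_0 < \cdots < h_n$ there is a $y$ with $c_\omega(h_m,y)=1$ for all $m \leq n$, so the greedy search ($h_0 = 0$; $h_{n+1} = $ least such $y$) produces an infinite homogeneous set computable from $c_\omega$. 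This is precisely the first half of the proof of Proposition~\ref{lhprop}, and it is the sense in which the paper says $\mathsf{LH}$ is uniformly computably true. Your sketched diagonalization fails for the usual timing reason: to keep $c_\omega$ total computable you must commit the value $c_\omega(x,y)$ by a time bound computable from $(x,y)$, but $x$ and $y$ may enter $W_i$ only much later, after $c_\omega(x,y)$ has already been set to $1$; reserving thresholds $\ell(x)$ does not repair this, and no repair is possible because the object does not exist.

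The second, independent problem is the choice of $M$ as a \emph{full} elementary extension of $\omega$. Then $M \vDash \mathsf{PA}$, so $(M,S) \vDash \mathsf{RCA}_0 + \mathsf{B}\Sigma^0_2$ (the sets in $S$ are $\Delta^0_1$-definable with first-order parameters, so second-order bounding reduces to first-order bounding), and Proposition~\ref{lhprop} then gives $(M,S) \vDash \mathsf{LH}$: every instance of $\mathsf{LH}$ in $S$ has a solution in $S$, which is the negation of conclusion (2). The whole point of the lemma is that $M$ must \emph{fail} $\mathsf{B}\Sigma^0_2$ while remaining $1$-elementary over $\omega$; full elementarity is too much. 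The paper achieves this by taking a nonstandard elementary extension $N$ of $\omega$ with a nonstandard parameter $a$ and letting $M$ be the substructure of elements $\Sigma^0_2$-definable in $(N,a)$, which is a $1$-elementary (even $2$-elementary) extension of $\omega$ satisfying $\mathsf{I}\Sigma^0_1 + \neg\mathsf{B}\Sigma^0_2$; the failure of $\mathsf{B}\Sigma^0_2$ plus Proposition~\ref{lhprop} then hands you the master instance $c$ with no solution in $S$, with no explicit diagonalization needed. You should replace your construction of $M$ and $c$ with an argument of this kind.
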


\begin{proof}
Let $N$ be any nonstandard elementary extension of the standard
natural numbers, and let $a \in N$ be a nonstandard element. Then in
particular $N \vDash \mathsf{I}\Sigma^0_2$, and so
\[
M = \{x \in N : x \text{ is } \Sigma^0_2\text{-definable in } (N,a)\}
\]
is a model of $\mathsf{I}\Sigma^0_1 + \neg \mathsf{B}\Sigma^0_2$ which
is a $1$-elementary (in fact, $2$-elementary) substructure of
$N$. (See H\'{a}jek and Pudlak \cite[Theorem IV.1.33]{HP-1993} or
Kossak \cite[p.~223]{Kossak-1990}.) Thus, $M$ is a $1$-elementary
extension of the standard model, and for $S$ as in the statement,
$(M,S)$ is a model of $\RCA_0$. Since $\mathsf{B}\Sigma^0_2$ fails in
$M$, it follows by Proposition \ref{lhprop} that $\mathsf{LH}$ fails
in $(M,S)$. Fix an instance $c : [M]^2 \to 2$ of $\mathsf{LH}$ in $S$
with no solution in $S$. Then given a condition $p : [m]^2 \to 2$ for
$P_M$, we can define $d : [M]^2 \to 2$ by
\[
d(x,y) = \begin{cases}
 p(x,y) & \text{if } x,y < m	,\\
 1 & \text{if } x < m \text{ and } y \geq m,\\
 c(x,y) & \text{otherwise.}
 \end{cases}
\]
Clearly, $d$ is in $S$ and is an instance of $\mathsf{LH}$ that
extends $p$. But if $H$ is any solution to $d$ then $\{ x \in H : x
\geq m\}$ is a solution to $c$, so $d$ cannot have any solution in
$S$.
\end{proof}

\begin{prop}
\label{boundprop}
$\mathsf{LH} \nleq\sub{gW}\sup{$\mathsf{RCA}_0$} \mathsf{Bound}^*$.
\end{prop}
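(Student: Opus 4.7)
The plan is to suppose for contradiction that $\mathsf{LH} \leq\sub{gW}\sup{$\mathsf{RCA}_0$} \mathsf{Bound}^*$. By Proposition~\ref{gwgameprop}, this provides a fixed $n \in \omega$ and standard indices $e_1, \ldots, e_n$ such that the strategy $\sigma$ prescribing $Y_i = \Phi_{e_i}^{X_0 \oplus X_1 \oplus \cdots \oplus X_{i-1}}$ is a winning strategy for Player 2 in $\widehat{G}^{\mathsf{RCA}_0}(\mathsf{Bound}^* \to \mathsf{LH})$ securing victory in at most $n$ moves. I would take $(M, S)$ as provided by Lemma~\ref{extlem} and have Player 1 open the game by playing $(M, S)$, aiming to arrange Player 1's subsequent play so that the game lasts all $n$ rounds without Player 2 ever being able to declare victory.

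The bulk of the argument is an induction on $i = 0, 1, \ldots, n-1$ constructing a chain of conditions $p_0 \subseteq p_1 \subseteq \cdots \subseteq p_{n-1}$ in the forcing $P_M$ starting from the empty condition, maintaining the invariant that for every $\mathsf{LH}$-instance $X_0 \in S$ extending $p_i$ in the sense of Definition~\ref{notion}, Player 1 has a legal continuation $X_1, \ldots, X_i \in S$ against $\sigma$ such that each of Player 2's replies $Y_1, \ldots, Y_i$ is a legal $M$-instance of $\mathsf{Bound}^*$ in $(M, S)$ admitting $X_j$ as a common bound. Once $p_{n-1}$ is built, I would apply Lemma~\ref{extlem} to $p_{n-1}$ to obtain an $\mathsf{LH}$-instance $X_0 \in S$ extending $p_{n-1}$ with no solution in $S$. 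The invariant then guarantees Player 1 can play through all $n$ rounds, with each intermediate $Y_i$ a legal $\mathsf{Bound}^*$-instance (not a solution to $X_0$); by the winning assumption, Player 2's concluding reply $Y_n$ must then be a solution to $X_0$, but $Y_n \in M[X_0, \ldots, X_{n-1}] \subseteq S$, contradicting the choice of $X_0$.

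The technical heart is the inductive step. Given $p_i$ satisfying the invariant, suppose toward a further contradiction that no extension $p_{i+1} \supseteq p_i$ propagates the invariant. Then for every $q \supseteq p_i$ there is an $\mathsf{LH}$-instance $X_0 \in S$ extending $q$ and a legal first-$i$-rounds play by Player 1 for which Player 2's $(i+1)$-th reply $Y_{i+1}^{X_0} = \Phi_{e_{i+1}}^{X_0 \oplus X_1 \oplus \cdots \oplus X_i}$ either is not a legal $\mathsf{Bound}^*$-instance in $(M, S)$ or has no common bound in $S$. A fusion argument in $P_M$ ranging over the countably many candidate bounds $b \in |M|$, combined with Lemma~\ref{extlem}, then produces an $\mathsf{LH}$-instance $X_0 \in S$ with no solution in $S$ for which $Y_{i+1}^{X_0}$ enumerates a set $F_j$ unbounded in $M$ and is therefore not a legal $M$-instance of $\mathsf{Bound}^*$; since $X_0$ has no solution in $S$, this move is also not a winning move, contradicting that $\sigma$ is a winning strategy. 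Hence the required $p_{i+1}$ exists.

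The main obstacle I foresee is executing this fusion argument compatibly with the specific form of the $\mathsf{LH}$-instances furnished by Lemma~\ref{extlem}, which are built from a fixed nonstandard background coloring rather than being arbitrary generics. I would address this by re-examining the proof of Lemma~\ref{extlem} to extract the flexibility needed to realize, simultaneously, the prescribed dense sets for the forcing $P_M$ and the no-solution-in-$S$ property, perhaps by letting the underlying background coloring $c$ itself vary in tandem with the condition at each density step rather than being fixed once and for all.
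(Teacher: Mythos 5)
Your overall architecture matches the paper's in spirit: fix the computable strategy and the bound $n$ from Proposition~\ref{gwgameprop}, build a chain of conditions in the forcing of Definition~\ref{notion} that ``freezes'' Player 2's behavior round by round, then feed in a no-solution instance from Lemma~\ref{extlem} so that Player 2 survives all $n$ rounds only to be unable to produce a solution. But there is a genuine gap exactly where you locate your ``main obstacle,'' and the fix you gesture at (varying the background coloring in Lemma~\ref{extlem}) does not close it. Your inductive step needs a single $\mathsf{LH}$-instance $X_0$ that is simultaneously (a) sufficiently generic for $P_M$ to meet the dense sets forcing $Y_{i+1}$ to enumerate an unbounded set, (b) a member of $S$, and (c) without solution in $S$. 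These requirements conflict: $S$ is the countable collection of $\Delta^0_1$-definable sets over $M$, so a sufficiently $P_M$-generic real is not in $S$ at all; and the instances with no solution in $S$ that Lemma~\ref{extlem} can furnish are, off a bounded set, a \emph{fixed} coloring witnessing the failure of $\mathsf{B}\Sigma^0_2$, leaving no freedom to realize density requirements on the tail. Letting the background coloring vary does not help, because whatever instance you end up with must still witness $\neg\mathsf{LH}$ in $(M,S)$, and the bounds the strategy computes from it are then whatever they are --- you have no leverage to force them to be unbounded while preserving the no-solution property.

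The missing idea is the paper's two-model structure. The density argument is carried out entirely over $(\omega,\mathcal P(\omega))$, where every subset of $\omega$ --- in particular any sufficiently generic real for $P_\omega$ --- is a legal first move for Player~1 and every instance of $\mathsf{LH}$ has a solution; there the dichotomy ``either the bounds $b^q_i$ stabilize below some standard condition $p$, or some $D_{i,m}$ is dense below $p$'' can be resolved, and the dense case is refuted by playing a generic, which makes Player 2's move fail to be an instance of $\mathsf{Bound}^*$. The outcome is a single standard condition $p$ and standard numbers $m_0,\ldots,m_{n-1}$ such that ``every extension $q$ of $p$ has $b^q_i=m_i-1$'' --- a $\Pi^0_1$ sentence. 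Only then does one pass to $M$: since $M$ is a $1$-elementary extension of $\omega$ (this is precisely why Lemma~\ref{extlem} provides such an $M$), the $\Pi^0_1$ sentence persists, so the no-solution instance $d$ extending $p$ keeps Player~1 supplied with the legal replies $m_0,\ldots,m_{n-1}$, and Player 2 cannot win. Without this transfer step, your inductive construction cannot be completed as stated.
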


\begin{proof}
Assume for a contradiction that $\mathsf{LH}
\leq\sub{gW}\sup{$\mathsf{RCA}_0$} \mathsf{Bound}^*$. By
Proposition~\ref{gwgameprop}, there is an $n \in \omega$ such that
Player 2 has a computable strategy for
$\widehat{G}^{\mathsf{RCA}_0}(\mathsf{Bound}^* \rightarrow \mathsf{LH})$ that
ensures victory in at most $n$ many moves. Fix such a strategy.

For a condition $p : [j]^2 \rightarrow 2$ for the notion of forcing
$P_\omega$, we can consider what happens when our fixed strategy for
Player 2 is applied to a run in which Player 1 plays $(\omega,\mathcal
P(\omega))$ and $p$ as a partial first move. Unless the strategy
declares victory on its first move, it must play part of an instance
of $\mathsf{Bound}^*$, which is just a simultaneous enumeration of a
finite family of sets. We may assume by the usual convention on uses
that no number greater than $j$ is enumerated. Let $b^p_0$ be the
least bound on the set of all numbers enumerated in this way. Now, if
Player 1 plays $b^p_0$, then unless our strategy declares victory on
its second move, it again must play part of an instance of
$\mathsf{Bound}^*$, yielding an analogous bound $b^p_1$. Continuing in
this way, we obtain numbers $b^p_0,b^p_1,\ldots,b^p_k$ for some
$k<n$. Let $b^p_i=0$ for $k < i < n$.

For $i<n$ and $m \in \omega$, let $D_{i,m}$ be the set of conditions
$p$ such that $b^p_i \geq m$. If some $D_{0,m}$ is not dense then let
$m_0$ be the least such $m$. In this case, there is a condition $p_0
\in D_{0,m_0-1}$ with no extension in $D_{0,m_0}$. Notice that
$b^q_0=m_0-1$ for all extensions $q$ of $p_0$. Now, if some $D_{1,m}$ is
not dense below $p_0$ then let $m_1$ be the least such $m$. In this
case, there is an extension of $p_0$ in $D_{1,m_1-1}$ with no
extension in $D_{1,m_1}$. Proceeding in this way, we obtain a
condition $p$ such that either $m_i$ is defined for every $i<n$, or
there is a $k<n$ such that $m_i$ is defined for all $i<k$ and every
$D_{k,m}$ is dense below $p$. In either case, $b^q_i=m_i-1$ for all
extensions $q$ of $p$ and all $i$ such that $m_i$ is defined.

We claim that the latter case cannot hold. Suppose otherwise. Let $c$
be an instance of $\mathsf{LH}$ that extends $p$ and meets every
$D_{k,m}$ (i.e., every $D_{k,m}$ contains a $q$ such that $c$ extends
$q$). Then Player 1 can play $(\omega,\mathcal P(\omega))$ and $c$ on
its first move, and if Player 2 follows our fixed strategy, then the
moves $m_0,m_1,\ldots,m_{k-1}$ will be legal for Player 1 (as
otherwise some finite portion of $c$ is a condition $q$ extending $p$
with $b^q_i > m_i$ for some $i<k$). But then Player 2's $(k+1)$st move
is not an instance of $\mathsf{Bound}^*$.

Thus each $m_i$ for $i<n$ is defined, and we have the following for
our fixed condition $p$:
\begin{equation}
\label{mdef}
\forall q \, \forall i<n\, [\textrm{if $q$ extends
$p$ then } b^q_i=m_i-1].
\end{equation}
Now let $M$ and $S$ be as in Lemma~\ref{extlem}. Then $p$ is also a
condition for $P_M$, so there is an $M$-instance $d$ of $\mathsf{LH}$
in $S$ that extends $p$ and has no solution in $S$. But it is easy to
check that (\ref{mdef}) is a $\Pi^0_1$ statement, so since $M$ is a
$1$-elementary extension of the standard natural numbers, it also
holds over $M$. So Player $1$ can play $(M,S)$ and $d$ on its first
move, and if Player 2 follows our fixed strategy, then the moves
$m_0,m_1,\ldots,m_{n-1}$ will be legal for Player 1 (as otherwise some
finite portion of $d$ is a condition $q$ extending $p$ with $b^q_i >^M
m_i$ for some $i<n$). But then Player 2 has not won the game by the
$n$th move (since the only way for Player 2 to win this run of the
game is to play an $M$-instance of $\mathsf{Bound}^*$ with no
solution), contrary to assumption.
\end{proof}

Thus $\mathsf{LH}$ and $\mathsf{Bound}^*$ constitute a natural example of
the phenomenon witnessed by Uftring's Example~\ref{uftex}.

We can also interpret the fact that $\mathsf{LH}
\leq\sub{W}\sup{$\mathsf{RCA}_0+\mathsf{Bound}^*$} \mathsf{1}$ but
$\mathsf{LH} \nleq\sub{gW}\sup{$\mathsf{RCA}_0$} \mathsf{Bound}^*$ as
saying that the use of $\mathsf{Bound}^*$ in the first part of the
proof of Proposition~\ref{lhprop} is ``purely proof-theoretic''. It
neither requires a further ``computability-theoretic application'' of
$\mathsf{Bound}^*$ nor can be replaced by one or more such
applications (in the uniform setting). Uncovering this kind of
information seems to be a promising aspect of this approach to
calibrating the logical strength of $\Pi^1_2$-problems.

Proposition~\ref{boundprop} does not show that $\mathsf{SRT}^2_2
\nleq\sub{gW}\sup{$\mathsf{RCA}_0$} \mathsf{D}^2_2$, but it suggests
that this might well be the case, which would provide an even more
natural version of Example~\ref{uftex}, and show that the proof of
$\mathsf{SRT}^2_2$ from $\mathsf{D}^2_2$ necessarily makes both
computability-theoretic and further proof-theoretic use of
$\mathsf{D}^2_2$. Indeed, it even seems possible that $\mathsf{LH}
\nleq\sub{gW}\sup{$\mathsf{RCA}_0$} \mathsf{D}^2_2$.

\begin{q}
Is $\mathsf{SRT}^2_2 \leq\sub{gW}\sup{$\mathsf{RCA}_0$}
\mathsf{D}^2_2$? Is $\mathsf{LH} \leq\sub{gW}\sup{$\mathsf{RCA}_0$}
\mathsf{D}^2_2$?
\end{q}

We now strengthen Proposition~\ref{boundprop} as described above.

\begin{prop}
\label{strtprop}
$\mathsf{LH} \nleq\sub{gW}\sup{$\mathsf{RCA}_0$}
\mathsf{stRT}^1_{<\infty}$.
\end{prop}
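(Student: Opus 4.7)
The plan is to adapt the forcing argument of Proposition~\ref{boundprop} to handle the fact that solutions to instances of $\mathsf{stRT}^1_{<\infty}$ are infinite homogeneous sets rather than numerical bounds, so that Player 1's simulated responses form a branching tree of possibilities indexed by color choices. Assuming $\mathsf{LH} \leq\sub{gW}\sup{$\mathsf{RCA}_0$} \mathsf{stRT}^1_{<\infty}$, Proposition~\ref{gwgameprop} gives a computable strategy $\sigma$ for Player 2 in $\widehat{G}^{\mathsf{RCA}_0}(\mathsf{stRT}^1_{<\infty} \rightarrow \mathsf{LH})$ winning in at most $n$ moves. For a condition $p : [j]^2 \to 2$ in $P_\omega$ and a tuple $\mathbf{c} = (c_1, \ldots, c_{i-1})$ of natural numbers, I would simulate the run in which Player 1 plays $p$ (to be extended) and then responds to the strategy's $k$-th instance $f_k$ by playing the set $f_k^{-1}(c_k)$, and let $b^p_i(\mathbf{c})$ be the maximum value of $f_i$ on inputs $\leq j$ under the usual use convention.

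The main step is an inductive forcing argument producing a condition $p$ and natural numbers $m_1, \ldots, m_n$ such that for every $q \succeq p$ in $P_\omega$ and every $\mathbf{c}$ with each $c_k < m_k$, we have $b^q_i(\mathbf{c}) < m_i$. At each stage only finitely many $\mathbf{c}$ are relevant, since the $m_k$ are standard, so the density argument of Proposition~\ref{boundprop} is invoked separately for each $\mathbf{c}$: either some $D_{i,m,\mathbf{c}} = \{q : b^q_i(\mathbf{c}) \geq m\}$ fails to be dense below some extension of $p_{i-1}$, yielding a bound $m_i^{\mathbf{c}}$, or every $D_{i,m,\mathbf{c}}$ is dense and a sufficiently generic extension forces $\sigma$'s $i$-th move to have unbounded range, contradicting that $\sigma$ must play legal $\mathsf{stRT}^1_{<\infty}$-instances.

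Once $p$ is constructed, Lemma~\ref{extlem} supplies $(M, S) \vDash \mathsf{RCA}_0$ and an $\mathsf{LH}$-instance $d \in S$ extending $p$ with no solution in $S$. The forced statement is $\Pi^0_1$ and hence transfers from $\omega$ to $M$ by $1$-elementarity. Player 1 plays $(M, S)$ and $d$; at each subsequent step $\sigma$'s move $f_k$ has range bounded by the standard number $m_k$ (by the transferred statement), so standard pigeonhole, provable in $\mathsf{RCA}_0$ for a standard-many partition, yields $c^M_k < m_k$ whose preimage under $f_k$ is unbounded in $M$, and Player 1 plays that preimage as a legal response in $S$. After $n$ moves $\sigma$ must have played a solution to $d$ in $S$, contradicting the choice of $d$. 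The hardest step will be the case analysis within the density argument, where one must verify that the generic extension really produces a legal Player 1 play (with all responses $f_k^{-1}(c_k)$ genuinely infinite) in which $\sigma$'s $i$-th move becomes invalid; this rests on the earlier bounds being standard, so that the pigeonhole principle available in $\mathsf{RCA}_0$ forces the needed color preimages to be infinite in the generic $\mathsf{LH}$-instance that witnesses the contradiction.
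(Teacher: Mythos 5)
Your overall architecture matches the paper's proof of Proposition~\ref{strtprop} -- a forcing argument over $P_\omega$ producing standard bounds on the ranges of Player 2's colorings, followed by a $\Pi^0_1$ transfer to the model $M$ of Lemma~\ref{extlem} and a pigeonhole argument to keep Player 1's moves legal in $M$. But there is a genuine gap at exactly the step you flag as ``the hardest step,'' and the resolution you sketch does not work. In case (b) of your dichotomy (every $D_{i,m,\mathbf{c}}$ dense), the contradiction with ``$\sigma$ must play legal $\mathsf{stRT}^1_{<\infty}$-instances'' requires that the simulated run indexed by $\mathbf{c}$ be a \emph{legal} run over $(\omega,\mathcal P(\omega))$, i.e., that each set $f_k^{-1}(c_k)$ for $k<i$ be infinite for the generic. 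Pigeonhole applied to a coloring with range bounded by the standard number $m_k$ only guarantees that \emph{some} color below $m_k$ has infinite preimage, not that the particular $c_k$ in your fixed tuple does. For a tuple $\mathbf{c}$ in which some $f_k^{-1}(c_k)$ is finite, Player 1's simulated move is illegal, $\sigma$ is under no obligation to play a bounded-range coloring at move $i$, case (b) yields no contradiction, and the bound $m_i^{\mathbf{c}}$ need not exist. Consequently the $\Pi^0_1$ statement you propose to transfer -- quantified over \emph{all} tuples with $c_k<m_k$ -- may simply be false over $\omega$.

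This is precisely the difficulty the paper's proof addresses with the finitely branching tree $T$ and, crucially, its property 3: for each node $\alpha$ and each color $v$ below the bound, one either verifies that the set of conditions assigning color $v$ to arbitrarily large arguments is dense (putting $\alpha^\frown v$ into $T$, so that a generic makes the corresponding move legal), or one records a \emph{locking} condition below which no extension ever assigns color $v$ to a new argument. The locking clauses are themselves $\Pi^0_1$ and transfer to $M$, and it is only because of them that the color $c_k^M$ produced by pigeonhole in $M$ is forced to be a tree color -- one for which the bound at the next level was actually established. Your proposal records bounds only for the ``live'' tuples and carries no information about the ``dead'' ones to $M$, so nothing prevents the pigeonhole in $M$ from selecting a color for which your induction has no bound to continue with. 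The fix is not cosmetic: the live/dead dichotomy with transferable locking conditions is the main new ingredient of this proof relative to Proposition~\ref{boundprop}, and your sketch omits it.
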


\begin{proof}
Assume for a contradiction that $\mathsf{LH}
\leq\sub{gW}\sup{$\mathsf{RCA}_0$} \mathsf{stRT}^1_{<\infty}$. By
Proposition~\ref{gwgameprop}, there is an $n \in \omega$ such that
Player 2 has a computable strategy for
$\widehat{G}^{\mathsf{RCA}_0}(\mathsf{stRT}^1_{<\infty} \rightarrow
\mathsf{LH})$ ensuring victory in at most $n$ many moves. There is
then also a strategy that ensures victory in exactly $n$ many moves,
since Player 2 can extend the length of any game by playing computable
($\Delta^0_1$-definable) instances of $\mathsf{stRT}^1_{<\infty}$ on
all its moves from some point on. Fix such a strategy, and for
notational convenience, assume $n > 1$.

We begin as in the previous proof by considering games over the
standard natural numbers. Note that if Player 2 plays according to
its strategy and does not declare victory on some move, then it has to
play an instance of $\mathsf{stRT}^1_{<\infty}$ only provided all of
Player 1's moves so far have been legal. However, since every set can
be viewed as a coloring $\omega \to \omega$ (not necessarily with
bounded range), we can always assume that Player 2 plays such a
coloring. This coloring may be partial, however, in which case by
usual use conventions we can assume it is defined on a finite initial
segment of $\omega$.

Fix a condition $p$ for the notion of forcing $P_\omega$. For each
$\alpha \in \omega^{\leq n-2}$, we define a coloring $f^p_\alpha$ of a
finite initial segment of $\omega$. Having done so, we let
$H^p_{\alpha^\frown v}$ for each $v \in \omega$ be the set of all $x$
such that $f^p_\alpha(x) = v$. We start with $\alpha$ equal to
$\lambda$, the empty string. As in the proof of Proposition
\ref{boundprop}, suppose Player 1 plays $(\omega,\mathcal{P}(\omega))$
and $p$ as a partial first move. Since $n > 1$, the strategy for
Player 2 makes it play a coloring of a finite initial segment of
$\omega$ as its partial first move. Let $f^p_\lambda$ be this
coloring. Now, suppose $f^p_\alpha$ has been defined for some $\alpha$
with $|\alpha| < n - 2$, and fix $v \in \omega$. Suppose Player 1
plays $(\omega,\mathcal{P}(\omega))$ and $p$ as a partial first move,
and for $0 < k \leq |\alpha| + 1$, plays $H^p_{(\alpha^\frown v)
\upharpoonright k}$ as a partial $(k+1)$st move, with Player 2
playing according to its fixed strategy. Since $n > |\alpha|+2$, the
strategy for Player 2 makes it again play a coloring of an initial
segment as its partial $(|\alpha|+2)$nd move. Let $f^p_{\alpha^\frown
v}$ be this coloring.

We now define a finitely branching subtree $T$ of $\omega^{\leq
n-2}$, and for each $\alpha \in T$, a condition $p_\alpha$, such
that the following properties hold:
\begin{enumerate}

\item For all $\alpha,\beta \in T$, if $\beta$
length-lexicographically precedes $\alpha$ then $p_\alpha$ extends
$p_\beta$.

\item For every $\alpha^\frown v \in T$ and for every $m \in \omega$,
the set of conditions $p$ with $f^p_\alpha(x) = v$ for some $x \geq
m$ is dense below $p_\alpha$.

\item For every $\alpha \in T$ and every $v$ such that $\alpha^\frown
v \in \omega^{n-1} \setminus T$, if $f^p_\alpha(x) = v$ for some
condition $p$ extending $p_\alpha$ and some $x$, then $x \in \dom
f^{p_\alpha}_\alpha$.

\end{enumerate}

We put strings $\alpha$ into $T$ and define $p_\alpha$
simultaneously. Initially, put $\lambda \in T$ and let $p_\lambda$ be
the empty condition. Notice that properties 1--3 hold vacuously at
this point.

Next, assume we are at a point in the definition of $T$ at which
properties 1--3 hold, and consider the length-lexicographically least
$\alpha \in T$ with $|\alpha| < n-2$ such that we have not yet put
$\alpha^\frown{} v$ into $T$ for any $v$. Let $\beta \in T$ be
length-lexicographically largest such that $p_\beta$ has been
defined. Let $c : \omega \to \omega$ extend $p$ and be sufficiently
generic for the forcing notion $P_\omega$. If Player 1 plays
$(\omega,\mathcal{P}(\omega))$ and $c$ on its first move, then the
strategy for Player 2 makes it play an instance $f_0$ of
$\mathsf{stRT}^1_{<\infty}$ in response. By property 2 and the genericity
of $c$, the set of $x$ such that $f_0(x) = \alpha(0)$ is infinite, so
$H_0 = \{x : f_0(x) = \alpha(0)\}$ is a legal second move for Player
1. Then the strategy for Player 2 makes it play another instance $f_1$
of $\mathsf{stRT}^1_{<\infty}$ on its second move, and the set $H_1 =
\{x : f_1(x) = \alpha(1)\}$ will be infinite and hence a legal third
move for Player 1. Since $n > |\alpha|+2$, if we continue in this way
we analogously define $f_0,\ldots,f_{|\alpha|}$ and
$H_0,\ldots,H_{|\alpha|-1}$, with $f_k$ played by Player 2 on its
$(k+1)$st move for all $k \leq |\alpha|$, and $H_k$ played by Player 1
on its $(k+2)$nd move for all $k < |\alpha|$. Since the strategy for
Player 2 is computable and hence continuous, it is easy to see by
induction that if $q$ is any condition extended by $c$ then
$f^q_{\alpha \upharpoonright k}$ is an initial segment of $f_k$, and
$H^p_{\alpha \upharpoonright (k + 1)}$ is an initial segment of
$H_k$. Now, as $f_{|\alpha|}$ is an instance of
$\mathsf{stRT}^1_{<\infty}$, there must be a condition $q_0$ extended
by $c$ and a $b \in \omega$ such that $f^r_\alpha(x) < b$ for all $x$
and all $r$ extending $q_0$.

We now decide for which $v < b$ to add $\alpha^\frown{} v$ to $T$ and
define $p_{\alpha^\frown{}v}$. Fix $v$, and suppose we have already
decided this for all $w < v$. For notational convenience, assume we
have also defined an auxiliary condition $q_v$ extending $q_0$. If
there is a condition $r$ extending $q_v$ such that for every $m \in
\omega$, every extension of $r$ has a further extension $s$ such that
$f^s_\alpha(x) = v$ for some $x \geq m$, then let $\alpha^\frown{}v
\in T$ and let $p_{\alpha^\frown{}v} = q_{v+1} = r$. Otherwise, there
is an extension $r$ of $q_v$ such that for every extension $s$ of $r$,
if $f^s_\alpha(x) = v$ for some $x$ then $x$ is in the domain of
$f^r_\alpha$, and we let $q_{v+1} = r$ and let $\alpha^\frown{}v
\notin T$. It is readily seen that this process adds
$\alpha^\frown{}v$ to $T$ for at least one $v$, and for only finitely
many $v$, and that properties 1, 2, and 3 are preserved.

Let $p^* = p_\beta$ for the length-lexicographically largest $\beta
\in T$. Let $M$ be as given by Lemma \ref{extlem}, and let $S$ be the
set of subsets of $|M|$ that are $\Delta^0_1$-definable over
$M$. Every condition for $P_\omega$ is also a condition for $P_M$. So
let $c$ be an instance of $\mathsf{LH}$ in $S$ that extends $p^*$ and
has no solution in $S$. For every node $\alpha \in T$, let $G_\alpha$
be the following run of a game. Player 1 plays $(M,S)$ and $c$ as its
first move, and Player 2 plays according to its strategy. On its
$(k+1)$st move for $0 < k \leq |\alpha|$, Player 1 always plays the
set of all $x \in M$ that are colored $\alpha(k-1)$ by the coloring
played by Player 2 on its previous move (assuming it played a total
coloring and not just a partial one). We claim that there is an
$\alpha \in T$ of length $n-2$ such that Player 1's moves in
$G_\alpha$ are all legal. We argue by induction (along the standard
number $n-1$) that for each $k < n-1$ there is such an $\alpha \in T$
of length $k$. Suppose that for some $\alpha \in T$ of length $k-1$,
Player 1's moves in $G_\alpha$ are all legal. Then on its
$(|\alpha|+1)$st move in $G_\alpha$, Player 2 plays an instance $f$ of
$\mathsf{stRT}^1_{<\infty}$. Now, property 3 in the definition of $T$
is a $\Pi^0_1$ statement of arithmetic, so since $M$ is a
$1$-elementary extension of $\omega$, it must also hold in $M$. Thus,
all the $v \in M$ such that $f^{-1}(v)$ is unbounded in $M$ must be
among those for which $\alpha^\frown v \in T$. Since there are only
standardly many such $v$, there must be at least one for which
$f^{-1}(v)$ really is unbounded in $M$, so Player 1's moves in
$G_{\alpha^\frown v}$ will all be legal. This establishes the
claim. To complete the proof, fix such an $\alpha$ of length
$n-2$. All sets played by Player 1 are clearly in $S$, so when Player
2 declares victory on its $n$th (i.e., $(|\alpha|+2)$nd) move in
$G_\alpha$ it must play a solution to $c$ in $S$. But there is no such
solution by hypothesis, which is a contradiction.
\end{proof}

\section{Versions of $\mathbf{\Pi^0_1}$-bounding}
\label{exsec2}

In this section we fill out the picture of implications between
versions of $\mathsf{B}\Pi^0_1$ and related principles.

As with $\mathsf{RT}^1_{<\infty}$, we can define a strong form
$\mathsf{stBound}^*$ of $\mathsf{Bound}^*$ by having the number of
sets not be part of the instance. A convenient way to express this
problem is to say that an instance is an enumeration of a subset $X$
of $\mathbb N \times \mathbb N$ such that $\{n : \exists k\, (n,k) \in
X\}$ is bounded, and for each $n$, so is the set $\{k : (n,k) \in X\}$;
and a solution is a bound on $\{k : \exists n\, (n,k) \in
X\}$. It is easy to see that $\mathsf{stBound^*}
\equiv\sub{W} \mathsf{Bound}$, but we will see that this equivalence
no longer holds in our setting.

Another problem worth mentioning in this connection is
$\mathsf{C}_{\mathbb N}$, for which an instance is an enumeration of
the complement of a nonempty set $X$, and a solution is an element of
$X$. The finite parallelization $\mathsf{C}_{\mathbb N}^*$ is yet
another equivalent of $\mathsf{B}\Sigma^0_2$: In one direction, we can
enumerate the sets $\{m : \forall k<m\, \neg\phi(i,k)\}$ for a given
$\Pi^0_1$ formula $\phi(i,k)$, and from a tuple containing an element
of the complement of each of these sets, obtain a common bound on the
sets. In the other direction, given simultaneous enumerations of the
complements of the nonempty sets $F_0,\ldots,F_n$, by
$\textsf{B}\Pi^0_1$, there is a $b$ such that each $F_i$ has an
element less than $b$. Now bounded $\Pi^0_1$-comprehension, which
holds in $\mathsf{RCA}_0$, gives us the set of all tuples
$(a_0,\ldots,a_j)$ with $j \leq n$ and $a_i \in F_i$ for all $i \leq
j$, and set induction shows that there must be such a tuple with
$j=n$.

It is easy to see that $\mathsf{C}_{\mathbb N} \equiv\sub{W}
\mathsf{C}_{\mathbb N}^*$, and Pauly, Fouch\'e, and Davie~\cite{PFD}
showed that $\mathsf{Bound} \equiv\sub{W} \mathsf{C}_{\mathbb N}$,
using the Weihrauch equivalence between $\mathsf{C}_{\mathbb N}$ and
its restriction $\mathsf{UC}_{\mathbb N}$ to enumerations of
complements of singleton sets, which was proved by Brattka, de Brecht,
and Pauly~\cite{BBP}. Brattka and Rakotoniaina~\cite{BraRak} showed
that $\mathsf{C}_{\mathbb N} \mid\sub{W} \mathsf{RT}^1_{<\infty}$ and
$\mathsf{C}_{\mathbb N} <\sub{W} \mathsf{stRT}^1_{<\infty}$. Indeed,
it is even the case that $\mathsf{RT}^1_2 \nleq\sub{W}
\mathsf{C}_{\mathbb N}$; we will prove a stronger version of this fact
below. It is also worth noting that $\mathsf{RT}^1_2 <\sub{W}
\mathsf{RT}^1_3 <\sub{W} \cdots$, as shown by Brattka and
Rakotoniaina~\cite{BraRak} and Hirschfeldt and
Jockusch~\cite{HirJoc}. Thus we have the following picture for
Weihrauch reducibility:

\bigskip

\begin{equation}
\label{figone}
\xymatrix@C-30pt@R-5pt{
 & {\mathsf{stRT}^1_{<\infty}}\ar[dl]\ar[dr] \\
{\mathsf{RT}^1_{<\infty}} \ar@{.>}[d] & & {\mathsf{C}_{\mathbb N} \equiv
\mathsf{C}_{\mathbb N}^* \equiv \mathsf{Bound} \equiv \mathsf{Bound}^*
\equiv \mathsf{stBound}^*}\ar[dddl] \\
{\mathsf{RT}^1_3}\ar[d]\\
{\mathsf{RT}^1_2}\ar[dr]\\
 & {\mathsf{LH} \equiv \mathsf{1}}
}
\end{equation}

\bigskip

Hirschfeldt and Jockusch~\cite[Proposition 4.7]{HirJoc} showed that
$\mathsf{RT}^1_{<\infty} \leq\sub{gW} \mathsf{RT}^1_2$, but their
proof in fact shows that $\mathsf{stRT}^1_{<\infty} \leq\sub{gW}
\mathsf{RT}^1_2$. On the other hand, we have the following.

\begin{prop}
$\mathsf{RT}^1_2 \nleq\sub{gW} \mathsf{C}_{\mathbb N}$.
\end{prop}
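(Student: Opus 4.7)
The plan is a direct diagonalization against any purported computable winning strategy, adapting the forcing techniques from the proofs of Propositions~\ref{boundprop} and~\ref{strtprop} to the case of $\mathsf{C}_{\mathbb{N}}$ over the standard model. Suppose for contradiction that $\sigma$ is a computable winning strategy for Player~2 in $G(\mathsf{C}_{\mathbb N}\to\mathsf{RT}^1_2)$. The aim is to construct a 2-coloring $c\colon\mathbb{N}\to 2$ together with a sequence of Player~1 responses $a_1,a_2,\ldots$ such that, on the resulting run, $\sigma$ either plays indefinitely (so that Player~1 wins by rule~(5)) or eventually commits to an alleged infinite homogeneous set that is not in fact homogeneous for~$c$.

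I would use a forcing whose conditions are pairs $(p,\vec a)$ with $p$ a finite partial 2-coloring and $\vec a=(a_1,\ldots,a_k)$ a finite sequence of Player~1 responses that, together with $p$, are consistent with $\sigma$'s first $k$ moves: $\sigma$ plays indices $e_1,\ldots,e_k$ of $\mathsf{C}_{\mathbb N}$-instances, and each $a_i$ lies outside the finite portion of $\Phi_{e_i}^{p\oplus a_1\oplus\cdots\oplus a_{i-1}}$ already committed. A condition $(q,\vec b)$ extends $(p,\vec a)$ if $q\supseteq p$, $\vec b$ extends $\vec a$, and $q$ differs from $p$ only above the maximal use of $\sigma$ on the moves encoded by $(p,\vec a)$. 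For each index $e$ that could be the final ``declare-victory'' output of $\sigma$, we consider the set of conditions either (i) witnessing that $\Phi_e^{p\oplus\vec a}$ already outputs two elements receiving opposite colors under $p$ (so the claimed $\Phi_e^{c\oplus \vec a}$ cannot be homogeneous for any completion of $p$), or (ii) forcing $\sigma$ past the point where $e$ could still be its eventual index, by extending $\vec a$ with one more legal response to $\sigma$'s next $\mathsf{C}_{\mathbb N}$-instance.

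These sets are dense: in case~(i) we use the freedom to extend $p$ arbitrarily above the current use, coloring the relevant elements to disagree; in case~(ii) we use that $\sigma$'s newly played enumeration has infinitely many legal responses (since $\mathsf{C}_{\mathbb N}$-instances are by definition complements of nonempty sets and any finite portion enumerated so far has an unbounded complement). A sufficiently generic filter through these dense sets produces a coloring $c$ and responses $\vec a$ such that in the resulting run, $\sigma$ either plays forever or commits to a non-homogeneous set; in either case $\sigma$ fails, contradicting the assumption that it is winning.

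The main obstacle is coordinating the diagonalization so that extending $(p,\vec a)$ to defeat one index does not retroactively invalidate the legitimacy of earlier responses $a_i$ (which depend on $\sigma$'s enumerations, which depend on $p$). This is resolved by the standard use-convention: $\sigma$'s first $k$ moves depend only on a finite initial segment of $(p,\vec a)$, so extending $p$ strictly above that use preserves their legitimacy, and all diagonalizations can be arranged to occur above the relevant uses. With this bookkeeping the density lemmas go through and the construction yields the desired defeating run.
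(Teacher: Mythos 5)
There is a genuine gap in the legality of Player~1's responses. In $G(\mathsf{C}_{\mathbb N}\to\mathsf{RT}^1_2)$, a legal reply to an enumeration of the complement of a set $A$ must be an element of $A$, i.e., a number that is \emph{never} enumerated. Your conditions only require each $a_i$ to lie outside the \emph{finite portion} of $\Phi_{e_i}^{p\oplus a_1\oplus\cdots\oplus a_{i-1}}$ enumerated so far, and your extension relation (extend $p$ only above the use of $\sigma$'s moves to date) does not prevent longer computations, reading the newly added part of the coloring, from later enumerating $a_i$ into the complement. So the generic run you build may have Player~1 making illegal moves, in which case it is not a run of the game at all and does not refute $\sigma$. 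The same conflation appears in your density argument for case~(ii): ``any finite portion enumerated so far has an unbounded complement'' guarantees candidates that are not yet enumerated, not candidates that are genuine solutions --- and indeed the full instance determined by the generic $c$ may enumerate all of $\mathbb N$, leaving no legal response whatsoever (which is a win for Player~1, but your framework has no way to detect or exploit this).

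Repairing this requires exactly the mechanism your proposal omits: for each $i$, either some extension of the current condition enumerates $a_i$ (forcing you to retract $a_i$ and everything built on top of it), or no extension does (so $a_i$ is genuinely safe below that condition). Iterating the first alternative may never terminate, and that divergent case must be converted into a win for Player~1 by observing that the $i$-th instance then enumerates cofinally many candidates and so has empty solution set, making Player~2's move illegal. This retraction-and-stabilization dichotomy is the heart of the matter, and it is what the paper's proof implements directly: Player~1 provisionally guesses the least element currently remaining in $A_i$, restarts the simulation from level $i$ whenever that guess is falsified, and then splits into the two cases (some level restarts infinitely often, so Player~2's move enumerates all of $\mathbb N$; or all levels stabilize, and the stabilized guesses are genuine solutions yielding a run in which $\sigma$ never wins, with the declared homogeneous set defeated by switching the color of all sufficiently large numbers away from $c(m)$ once $\sigma$ commits to $m$). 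Note also that the paper's argument works entirely over the standard model with a direct computable simulation; the forcing apparatus of Propositions~\ref{boundprop} and~\ref{strtprop} is needed there because of the passage to nonstandard models, and importing it here adds complexity without supplying the missing stabilization argument.
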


\begin{proof}
Suppose that $\mathsf{RT}^1_2 \leq\sub{gW} \mathsf{C}_{\mathbb N}$
via a computable strategy $P$ for Player 2. As Player 1, we can begin
to build a coloring $c$ by coloring numbers in order, initially giving
each number the color $0$, and simulate the action of $P$. We can
assume that, even when provided with inputs that do not correspond to
a run of $G(\mathsf{C}_{\mathbb N} \rightarrow \mathsf{RT}^1_2)$, if
$P$ does not declare victory at a given move, then it outputs an
enumeration of the complement of some set, though in that case the set
might be empty.

Let $A_i$ be the set whose complement is being enumerated by $P$ as
its $(i+1)$st move (if $P$ has not declared victory at or before that
move). We guess at each stage that the least number $k_i$ currently in
$A_i$ is a solution to the corresponding instance of
$\mathsf{C}_{\mathbb N}$ and play that as our $(i+2)$nd move in the
simulation. If we ever find that $k_i$ is not in $A_i$, we restart the
simulation (but do not change $c$ on the numbers at which we have
already defined it). For the least such $i$, say that $i$ causes the
simulation to restart. If the current simulation is not restarted,
then eventually $P$ must declare victory at some move, and declare
some number $m$ to be in the set it outputs at that move. We then
start to give our numbers the color $1-c(m)$. If we were to do this
forever, then $m$ could not be part of a solution to $c$, so our
current simulation cannot be a true run of the game, and hence
eventually some $i$ must cause it to restart.

Thus the simulation is restarted infinitely often. There are now two
cases.

If there is a least $i$ that causes the simulation to restart
infinitely often, then, by induction, $k_0,\ldots,k_{i-1}$ have final
values, and if we play $c$ on our first move, and then play these
values in turn, we produce a run of our game in which $P$'s $(i+1)$st
move is an enumeration of $\mathbb N$, and hence is not an instance of
$\mathsf{C}_{\mathbb N}$, which is a contradiction.

Otherwise, again by induction, all $k_i$'s have final values, and if
we play $c$ on our first move, and then play these values in turn, we
produce a run of our game in which $P$ never declares victory, which
is again a contradiction.
\end{proof}

So for gW-reducibility, we have the following simpler picture:

\bigskip

\begin{equation}
\label{figtwo}
\xymatrix@C-30pt@R-10pt{
{\mathsf{stRT}^1_{<\infty} \equiv \mathsf{RT}^1_{<\infty}} \equiv
\mathsf{RT}^1_2 \ar[d] \\
{\mathsf{C}_{\mathbb N} \equiv
\mathsf{C}_{\mathbb N}^* \equiv \mathsf{Bound} \equiv \mathsf{Bound}^*
\equiv \mathsf{stBound}^*} \ar[d] \\
{\mathsf{LH} \equiv \mathsf{1}}
}
\end{equation}

\bigskip

It is easy to check that all the Weihrauch reductions in
Diagram~(\ref{figone}) still work over $\mathsf{RCA}_0 +
\mathsf{B}\Sigma^0_2$, so that diagram also reflects the relationships
between these principles with respect to
$\leq\sub{W}\sup{$\mathsf{RCA}_0 + \mathsf{B}\Sigma^0_2$}$ (or
$\leq\sub{W}\sup{$\Gamma$}$ for any extension $\Gamma$ of
$\mathsf{RCA}_0 + \mathsf{B}\Sigma^0_2$ by formulas true over the
natural numbers). Diagram~(\ref{figtwo}), however, does change if we
work over $\mathsf{RCA}_0 + \mathsf{B}\Sigma^0_2$. We still have the
equivalence between $\mathsf{RT}^1_j$ and $\mathsf{RT}^1_k$ for $j,k
\geq 2$ (which holds even over $\mathsf{RCA}_0$, with the usual
proof), but Corollary~\ref{rt2gwcor} shows that
$\mathsf{RT}^1_{<\infty}
\nleq\sub{gW}\sup{$\mathsf{RCA}_0+\mathsf{B}\Sigma^0_2$}
\mathsf{RT}^1_k$ for all $n$ and $k$. Similarly, we have the
following.

\begin{prop}
$\mathsf{C}_{\mathbb N} \nleq\sub{gW}\sup{$n$}
\mathsf{RT}^1_{<\infty}$ for all $n$, so if we let $\Gamma$ consist of
$\mathsf{RCA}_0$ together with all $\Pi^1_1$ formulas true over the
natural numbers then  $\mathsf{C}_{\mathbb N}
\nleq\sub{gW}\sup{$\Gamma$} \mathsf{RT}^1_{<\infty}$.
\end{prop}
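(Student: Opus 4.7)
The plan is to parallelize: I will show that any computable strategy for Player 2 that wins $G(\mathsf{RT}^1_{<\infty} \to \mathsf{C}_{\mathbb N})$ in at most $n+1$ moves yields a computable strategy for Player 2 winning the same game in exactly $2$ moves, i.e., a Weihrauch reduction $\mathsf{C}_{\mathbb N} \leq\sub{W} \mathsf{RT}^1_{<\infty}$. Since this contradicts Brattka and Rakotoniaina's result $\mathsf{C}_{\mathbb N} \mid\sub{W} \mathsf{RT}^1_{<\infty}$ cited at the top of this section, it establishes the first assertion. The second assertion then follows via Proposition~\ref{gwgameprop}: any computable winning strategy for Player 2 in $\widehat{G}^\Gamma(\mathsf{RT}^1_{<\infty} \to \mathsf{C}_{\mathbb N})$ is bounded in the number of moves by that proposition, and restricting to runs in which Player 1 begins by playing $(\omega, \mathcal{P}(\omega))$ yields a bounded-move computable winning strategy for $G(\mathsf{RT}^1_{<\infty} \to \mathsf{C}_{\mathbb N})$, contradicting the first assertion.

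To prove the first assertion, I would suppose for a contradiction that Player 2 has a computable strategy $P$ for $G(\mathsf{RT}^1_{<\infty} \to \mathsf{C}_{\mathbb N})$ winning in at most $n+1$ moves. By padding $P$ with trivial one-color colorings after any early declaration and re-declaring the same element at move $n+1$, I may assume $P$ always makes exactly $n$ queries before declaring. Given an instance $E$ of $\mathsf{C}_{\mathbb N}$---so $E$ enumerates $\mathbb N \setminus X$ for some nonempty $X$---I would define the parallelized coloring $c^* : \mathbb N \to K$ as follows. For each $x \in \mathbb N$, simulate $P$ along the \emph{canonical path for $x$}: letting $c_1$ be $P$'s first coloring on input $E$, set $i_1(x) = c_1(x)$; letting $c_2$ be the coloring $P$ plays in response to $H_1 := c_1^{-1}(i_1(x))$, set $i_2(x) = c_2(x)$; continue until $i_n(x)$ is defined; and set $c^*(x) = \langle i_1(x), \ldots, i_n(x)\rangle$. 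Each coloring arising in this simulation has finitely many colors, and only finitely many tuples of previous indices arise, so $c^*$ takes only finitely many values and is computable uniformly in $E$.

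The key observation is that if $H$ is an infinite homogeneous set for $c^*$ of common color $\langle i_1^*, \ldots, i_n^*\rangle$, then, letting $H_j$ denote the $i_j^*$-colored class of the $j$-th coloring $P$ plays in response to $H_1, \ldots, H_{j-1}$ in turn, unravelling the definition of $c^*$ shows $H \subseteq H_j$ for each $j \leq n$; hence each $H_j$ is infinite and $(E, H_1, \ldots, H_n)$ is a legal run of the original game on which, by hypothesis, $P$ declares some $m \in X$. The parallelized two-move strategy $P^*$ therefore plays $c^*$ on its first move, and on its second move reads $c^*(h)$ for any $h \in H$ to recover $\langle i_1^*, \ldots, i_n^*\rangle$, reconstructs the $H_j$'s from $E$ and these indices, computes $m$ as $P$'s $(n+1)$st move on $(E, H_1, \ldots, H_n)$, and declares $m$. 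The step I expect to need the most care is the bookkeeping for $c^*$: verifying that its range is finite (each $i_j(x)$ lies in a finite set whose size is computably determined by $(i_1(x), \ldots, i_{j-1}(x))$) and that the whole construction of $c^*$ and $P^*$ is computable uniformly in $E$.
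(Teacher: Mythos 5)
There is a genuine gap in the first (and main) part of your argument, at precisely the step you flagged as needing care. Your parallelized coloring $c^*$ is computed by querying the strategy $P$ along the positions $(E, H_1, \ldots, H_{j-1})$ with $H_i = c_i^{-1}(i_i(x))$, and these $H_i$ are \emph{full color classes}, some of which may be finite. A position containing a finite color class is not a legal run of the game, and on such a position $P$ is just an arbitrary Turing functional: it may fail to declare a number of colors at all (the computation may diverge), or declare an arbitrarily large one, and it may output only a partial coloring. Consequently your claim that ``each $i_j(x)$ lies in a finite set whose size is computably determined by $(i_1(x),\ldots,i_{j-1}(x))$'' fails: the size is computably determined only along legal positions, and there is no computable (in $E$) way to tell which color classes of $c_j$ are infinite. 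So you cannot compute from $E$ the number of colors $K$ that must be declared as part of an instance of $\mathsf{RT}^1_{<\infty}$, and $c^*$ itself may be undefined on the (finitely many, but not computably identifiable) $x$ whose canonical path passes through a finite color class. Your ``key observation'' about an infinite homogeneous set $H$ for $c^*$ is correct as far as it goes, but only because such an $H$ automatically avoids all the bad paths; it does not help you produce a legal first move.

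This is not a removable technicality: if you give up on declaring $K$ in advance, what your construction produces is at best an instance of $\mathsf{stRT}^1_{<\infty}$, and $\mathsf{C}_{\mathbb N} \leq\sub{W} \mathsf{stRT}^1_{<\infty}$ is \emph{true} (Brattka and Rakotoniaina), so no contradiction results. The whole content of the proposition lives in the gap between $\mathsf{RT}^1_{<\infty}$ and $\mathsf{stRT}^1_{<\infty}$, which the parallelization erases. The paper's proof avoids this by arguing directly: it diagonalizes to build a single instance $E$ of $\mathsf{C}_{\mathbb N}$, simulating the finite tree of plays in which Player 1 responds with color classes, and enumerating into the complement of $E$ every purported solution $P$ ever declares. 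That argument needs only that each set $H_c$ of color classes is finite --- which holds even for garbage outputs on illegal positions, because an instance of $\mathsf{RT}^1_{<\infty}$ declares its $k$ up front --- and never needs a single computable bound combining the color counts across all paths; the paper explicitly remarks that this is why the same argument cannot be run against $\mathsf{stRT}^1_{<\infty}$. Your second step, deducing the statement about $\Gamma$ from the $\omega$-statement via Proposition~\ref{gwgameprop}, is correct and is what the paper does.
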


\begin{proof}
Suppose that $\mathsf{C}_{\mathbb N} \leq\sub{gW}\sup{$n$}
\mathsf{RT}^1_{<\infty}$ via a computable strategy $P$ for Player
2. We can assume that, even when provided with inputs that do not
correspond to a run of $G(\mathsf{RT}^1_{<\infty} \rightarrow
\mathsf{C}_{\mathbb N})$, if $P$ does not declare victory at a given
move, then its output at that move, if nonempty, is a number $k$
together with a possibly partial $c : \mathbb N \rightarrow k$.

For a possibly partial $c : \mathbb N \rightarrow k$, let $H_c =
\{c^{-1}(0),\ldots,c^{-1}(k-1)\}$. Note that if $c$ is total then at
least one element of $H_c$ is a solution to $c$ as an instance of
$\mathsf{RT}^1_{<\infty}$. We can start building an instance $E$ of
$\mathsf{C}_{\mathbb N}$ by initially not enumerating any numbers, and
running simulations of possible runs of $G(\mathsf{RT}^1_{<\infty} \rightarrow \mathsf{C}_{\mathbb N})$ beginning with $E$, where each
time $P$ plays some $c$, we play a simulation for each possible move
for Player 1 in $H_c$. (Notice that $c$ might not actually be an
instance of $\mathsf{RT}^1_{<\infty}$ because this simulation might not
correspond to an actual run of the game, but $H_c$ is still
finite. This is the reason we could not work with
$\mathsf{stRT}^1_{<\infty}$ here, because in that case $P$ would be
able to play functions with unbounded range during simulations that do
not correspond to actual runs.)

Whenever in any of these simulations $P$ declares victory at or before
the $(n+1)$st move with a purported solution $m$, we enumerate $m$
into $E$. Since each $H_c$ is finite, and we consider only finitely
many $c$'s during this construction, we enumerate only finitely many
numbers into $E$, and this strategy ensures that there is a run of
$G(\mathsf{RT}^1_{<\infty} \rightarrow \mathsf{C}_{\mathbb N})$
beginning with $E$ in which either $P$ does not declare victory by
its $(n+1)$st move, or it does so with a purported solution $m$ that
is enumerated into $E$, and hence is not in fact a solution to $E$. In
either case we have a contradiction.

The second part of the proposition now follows from
Proposition~\ref{gwgameprop}.
\end{proof}

Thus we have the following picture for gW-reducibility over
$\mathsf{RCA}_0 + \mathsf{B}\Sigma^0_2$  (or over any extension of
$\mathsf{RCA}_0 + \mathsf{B}\Sigma^0_2$ by $\Pi^1_1$ formulas true
over the natural numbers):

\bigskip

\begin{equation}
\label{figthree}
\xymatrix@C-30pt@R-5pt{
 & {\mathsf{stRT}^1_{<\infty}}\ar[dl]\ar[dr] \\
{\mathsf{RT}^1_{<\infty}} \ar[d] & & {\mathsf{C}_{\mathbb N} \equiv
\mathsf{C}_{\mathbb N}^* \equiv \mathsf{Bound} \equiv \mathsf{Bound}^*
\equiv \mathsf{stBound}^*} \ar[ddl] \\
{\mathsf{RT}^1_2 \equiv \mathsf{RT}^1_3 \equiv \cdots} \ar[dr] \\
 & {\mathsf{LH} \equiv \mathsf{1}}
}
\end{equation}

\bigskip

When working over $\mathsf{RCA}_0$, things change even further. We do
still have $\mathsf{Bound} \equiv\sub{W}\sup{$\mathsf{RCA}_0$}
\mathsf{C}_{\mathbb N}$, $\mathsf{Bound}^*
\equiv\sub{W}\sup{$\mathsf{RCA}_0$} \mathsf{C}_{\mathbb N}^*$, and
$\mathsf{C}_{\mathbb N} \leq\sub{W}\sup{$\mathsf{RCA}_0$}
\mathsf{stRT}^1_{<\infty}$, with essentially the same proofs. The only
parts that require a bit of care are $\mathsf{C}_{\mathbb N}
\leq\sub{W}\sup{$\mathsf{RCA}_0$} \mathsf{Bound}$ and
$\mathsf{C}_{\mathbb N}^* \leq\sub{W}\sup{$\mathsf{RCA}_0$}
\mathsf{Bound}^*$. We prove the latter, as the former is similar but
simpler. We argue in $\mathsf{RCA}_0$. Given an enumeration of the
complements of nonempty sets $A_0,\ldots,A_n$, constituting an
instance of $\mathsf{C}_{\mathbb N}^*$, we define enumerations of sets
$F_0,\ldots,F_n$ by putting $s$ into $F_i$ whenever the least element
$m^i_s$ of $A_i$ at stage $s$ of the enumeration of its complement
leaves $A_i$ at that stage. If $F_i$ were unbounded, then so would be
the set of numbers $m^i_s$, since the map taking $F_i$ to this set is
injective and computable. But then $A_i$ would be empty. So each $F_i$
is bounded, and hence our enumeration of $F_0,\ldots,F_n$ is an
instance of $\mathsf{Bound}^*$. If $s$ is a solution to this instance
then for each $i \leq n$, the least element of $A_i$ at stage $s$ must
be in $A_i$, so from $s$ we obtain a solution to our instance of
$\mathsf{C}_{\mathbb N}^*$.

However, every instance of $\mathsf{C}_{\mathbb N}$ and
$\mathsf{Bound}$ in every model of $\mathsf{RCA}_0$ has a solution,
while this is not the case for $\mathsf{C}_{\mathbb N}^*$ and
$\mathsf{Bound}^*$, which are equivalent to $\mathsf{B}\Sigma^0_2$
over $\mathsf{RCA}_0$ as statements of second-order arithmetic. So
$\mathsf{C}_{\mathbb N}$ is strictly below $\mathsf{C}_\mathbb{N}^*$
under both $\leq\sub{W}\sup{$\mathsf{RCA}_0$}$ and
$\leq\sub{gW}\sup{$\mathsf{RCA}_0$}$, and similarly for
$\mathsf{Bound}$ and $\mathsf{Bound}^*$.

We also no longer have a Weihrauch-reduction of $\mathsf{stBound}^*$
to $\mathsf{Bound}^*$, but do have one in two steps, because an
instance of $\mathsf{Bound}^*$ (or even $\mathsf{Bound}$) can be used
to determine the number of sets being enumerated in an instance of
$\mathsf{stBound}^*$, allowing us to solve that instance with a second
application of $\mathsf{Bound}^*$.

\begin{prop}
$\mathsf{stBound}^* \leq\sub{gW}\sup{$\mathsf{RCA}_0$,2}
\mathsf{Bound}^*$ but $\mathsf{stBound}^*
\nleq\sub{W}\sup{$\mathsf{RCA}_0$} \mathsf{Bound}^*$.
\end{prop}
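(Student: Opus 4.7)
The plan is to prove the two parts of the proposition separately. For the positive part, I will exhibit an explicit computable strategy for Player 2 in $\widehat{G}^{\mathsf{RCA}_0}(\mathsf{Bound}^* \rightarrow \mathsf{stBound}^*)$ that always wins in at most three moves (yielding $n = 2$). Given Player 1's opening move, a model $(M,S) \vDash \mathsf{RCA}_0$ together with an $M$-instance $X$ of $\mathsf{stBound}^*$ in $S$, Player 2 first plays the single-set $\mathsf{Bound}^*$-instance enumerating the support $D = \{n : \exists k\, (n,k) \in X\}$, which is bounded in $M$ by validity of $X$. After Player 1 responds with a bound $b$ on $D$, Player 2 plays the $\mathsf{Bound}^*$-instance whose $b$ sets are the fibers $F_n = \{k : (n,k) \in X\}$ for $n < b$, each bounded in $M$ by validity of $X$. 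Any common bound $B$ Player 1 then supplies is simultaneously a bound on the range of $X$, since every pair in $X$ has first coordinate below $b$, so Player 2 wins by declaring $B$ a solution.

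For the negative part, I will assume for contradiction that Turing functionals $\Phi, \Psi$ witness $\mathsf{stBound}^* \leq\sub{W}\sup{$\mathsf{RCA}_0$} \mathsf{Bound}^*$ and apply this reduction inside a model $(M, S) \vDash \mathsf{RCA}_0 + \neg \mathsf{B}\Sigma^0_2$ with $M$ a $1$-elementary extension of the standard naturals, exactly as produced in Lemma~\ref{extlem}. Since $\mathsf{stBound}^*$ is equivalent to $\mathsf{B}\Sigma^0_2$ over $\mathsf{RCA}_0$, such a model contains a valid $\mathsf{stBound}^*$-instance $X_0 \in S$ whose range is cofinal in $M$, so $X_0$ has no solution in $S$. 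I will then replace $X_0$ with its shift $X = \{(a+n, k) : (n,k) \in X_0\}$ for a fixed nonstandard $a \in M$: this preserves validity and the non-existence of a solution, additionally ensures that $X \cap (\omega \times \omega) = \emptyset$ (since every pair in $X$ has nonstandard first coordinate), and keeps $X \in S$ because $X$ is $\Delta^0_1$-definable from $X_0$ and $a$.

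The crucial step, which I expect to be the main obstacle, is to show that the first output $k$ of $\Phi^X$ in $M$ is forced to be standard. Since $\emptyset$ is itself a valid $\mathsf{stBound}^*$-instance, the standard computation $\Phi^\emptyset$ halts in $\omega$ at some standard stage $s_0$ with standard output $k_0$, having queried only finitely many standard positions, all answered $0$. Because $X$ agrees with $\emptyset$ on every pair in $\omega \times \omega$ and $M$ is a $1$-elementary extension of $\omega$, the same queries against $X$ inside $M$ return $0$, forcing $\Phi^X$ to halt at stage $s_0$ with the same output $k_0$. With $k_0$ pinned down as standard, I finish as follows: if some $F_i^X$ for $i \leq k_0$ is unbounded in $M$, then $\Phi^X$ fails to be a valid $\mathsf{Bound}^*$-instance, contradicting $\Phi$ being a valid reduction; otherwise, a routine finite-max computation (available in $\mathsf{RCA}_0$ since $k_0$ is a standard natural number) produces a common bound $b \in M$ that is a solution to $\Phi^X$ in $(M, S)$, so $\Psi^{X \oplus b}$ would have to be an element of $M$ bounding the range of $X$, which is impossible since that range is cofinal in $M$.
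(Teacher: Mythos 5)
Your argument is correct and is essentially the paper's own proof: the positive half is the identical support-then-fibers two-application reduction, and the negative half turns on the same key point, namely that the number of sets in the $\mathsf{Bound}^*$-instance $\Phi^X$ is forced to equal the standard value $k_0$ computed by $\Phi^{\emptyset}$ whenever the oracle for $X$ agrees with the empty instance on the (standard) use of that computation, whence a standard-length family of bounded sets has a common bound in $M$ and the backward functional would produce an impossible solution to the unsolvable instance $X$. The only difference is cosmetic: the paper arranges the agreement by delaying the enumeration past the use, whereas you shift the first coordinates into the nonstandard part; since an instance of $\mathsf{stBound}^*$ is officially an enumeration rather than the bare set of pairs, you should either note that your coding of enumerations makes the shifted instance literally empty on the standard numbers (true for the natural coding as a set of (stage, element) pairs with a monotone pairing function) or simply delay the enumeration as well, as the paper does.
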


\begin{proof}
Given an instance $X$ of $\mathsf{stBound}^*$, we can first build an
instance of $\mathsf{Bound}$ by enumerating $n$ whenever $X$
enumerates $(n,k)$ for some $k$. Given a solution $b$ to this
instance, we can build an instance of $\mathsf{Bound}^*$ consisting of
enumerations of sets $F_0,\ldots,F_{b-1}$ by enumerating $k$ into
$F_n$ whenever $X$ enumerates $(n,k)$. A solution to this instance is
also a solution to $X$.

For the second part, suppose that $\mathsf{stBound}^*
\leq\sub{W}\sup{$\mathsf{RCA}_0$} \mathsf{Bound}^*$ via
$\Phi_e$ and $\Phi_i$. An enumeration $E$ of $\emptyset$ is an
instance of $\mathsf{stBound}^*$, so $\Phi_e^E$ must be an instance of
$\mathsf{Bound}^*$. This instance has a fixed number of sets
$k$, which must be the same standard natural number no matter what
model of $\mathsf{RCA}_0$ we are working in, because the convergent
computation over the standard natural numbers still exists in any such
model. Now let $(M,S)$ be a model of $\mathsf{RCA}_0$ that contains an
$M$-instance $D$ of $\mathsf{stBound}^*$ with no solution. We can
delay $D$ to define a new $M$-instance $\widehat{D}$ of
$\mathsf{stBound}^*$ that enumerates the same set as $D$ but agrees
with $E$ up to the use of the part of the computation of $\Phi_e^E$
that fixes the number of sets at $k$. Then $\widehat{D}$ has no
solution, but $\Phi_e^{\widehat{D}}$ is an instance of
$\mathsf{Bound}^*$ with a standard number of sets, and hence
must have a solution $b$. But then $\Phi_i$ should be able to
compute a solution to $\widehat{D}$ from $\widehat{D}$ and $b$,
which is a contradiction.
\end{proof}

We can make the first part of this proposition a bit more precise by
using the compositional product from the theory of Weihrauch
reducibility: $\mathsf{stBound}^* \leq\sub{W}\sup{$\mathsf{RCA}_0$}
\mathsf{Bound}^* \star \mathsf{Bound}$.

The second part of the proposition easily generalizes to establish the
following useful principle (which we state for $\mathsf{RCA}_0$ but of
course applies to other systems as well).

\begin{prop}
Let $\mathsf{P}$ and $\mathsf{Q}$ be $\Pi^1_2$-problems such that
\begin{enumerate}

\item $\mathsf{P}$ has an $\omega$-instance $X$ such that for any
finite initial segment $\sigma$ of $X$, there is a model $(M,S)$ of
$\mathsf{RCA}_0$ and an $M$-instance $Y$ of $\mathsf{P}$ in $S$ that
extends $\sigma$ and has no solution in $S$; and

\item every instance $X$ of $\mathsf{Q}$ includes a parameter $k_X \in
\mathbb N$ such that for every model $(M,S)$ of $\mathsf{RCA}_0$ and
every $M$-instance $X$ of $\mathsf{Q}$ in $S$, if $k_X$ is a standard
natural number, then $X$ has a solution in $S$.

\end{enumerate}
Then $\mathsf{P} \nleq\sub{W}\sup{$\mathsf{RCA}_0$} \mathsf{Q}$.
\end{prop}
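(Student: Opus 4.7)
The plan is to mimic the second part of the proof of the previous proposition, generalizing the role played there by the number of sets in a $\mathsf{Bound}^*$-instance to the role of the parameter $k_X$ in condition (2).

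Suppose for contradiction that $\mathsf{P} \leq\sub{W}\sup{$\mathsf{RCA}_0$} \mathsf{Q}$ via Turing functionals $\Phi_e$ and $\Phi_i$. Let $X$ be an $\omega$-instance of $\mathsf{P}$ as provided by condition (1). Since $(\omega,\mathcal{P}(\omega)) \vDash \mathsf{RCA}_0$ and $X$ is an instance of $\mathsf{P}$ there, the set $\widehat X = \Phi_e^X$ is an instance of $\mathsf{Q}$. Let $k = k_{\widehat X}$, which by hypothesis is a standard natural number. The value of $k$ is part of the finite encoding of $\widehat X$, so only a finite initial segment $\sigma$ of $X$ is used in computing it; in particular, $\Phi_e^Z$ yields an instance of $\mathsf{Q}$ whose parameter equals $k$ for any $Z$ extending $\sigma$ in any model where the relevant computation converges (which is the case for any instance of $\mathsf{P}$ in any model of $\mathsf{RCA}_0$, by our Weihrauch-reduction assumption).

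Now apply condition (1) to the initial segment $\sigma$: there is a model $(M,S) \vDash \mathsf{RCA}_0$ and an $M$-instance $Y$ of $\mathsf{P}$ in $S$ that extends $\sigma$ and has no solution in $S$. By the reduction, $\Phi_e^Y$ is an $M$-instance of $\mathsf{Q}$ lying in $S$ (since $S$ is closed under $\Delta^0_1$-definability), and its parameter is exactly the standard number $k$ by choice of $\sigma$. Condition (2) then guarantees that $\Phi_e^Y$ has a solution $\widehat Y$ in $S$. But then $\Phi_i^{Y \oplus \widehat Y}$ lies in $S$ and is a solution to $Y$, contradicting the defining property of $Y$.

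The only genuinely delicate point is the transfer of the value $k$ across models: we need that the computation of $k$ from $X$ converges using only finitely many bits of $X$, and that a computation with the same oracle-use on the extending set $Y \subseteq |M|$ produces the same output $k$ in the nonstandard model. This is immediate from the standard use convention on Turing functionals, which is formalizable in $\mathsf{RCA}_0$, so the proof is really just a careful unpacking of how parameters behave under Weihrauch reductions; no extra machinery is required.
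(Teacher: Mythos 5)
Your proof is correct and is exactly the argument the paper intends: the paper offers no separate proof of this proposition, saying only that the second part of the preceding proposition (the $\mathsf{stBound}^* \nleq\sub{W}\sup{$\mathsf{RCA}_0$} \mathsf{Bound}^*$ argument) ``easily generalizes.'' Your steps --- computing the standard parameter $k$ from a finite use of the $\omega$-instance, invoking condition (1) on that initial segment to obtain a solution-free instance in some $(M,S)$, and using the absoluteness of the convergent computation to transfer $k$ to the nonstandard model --- constitute precisely that generalization, with no gaps.
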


As an example of the application of this principle, we have the
following.

\begin{cor}
$\mathsf{LH} \nleq\sub{W}\sup{$\mathsf{RCA}_0$}
\mathsf{RT}^1_{<\infty}$.
\end{cor}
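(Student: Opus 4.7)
The plan is to apply the general principle that immediately precedes the corollary, taking $\mathsf{P} = \mathsf{LH}$ and $\mathsf{Q} = \mathsf{RT}^1_{<\infty}$. So the work reduces to verifying the two hypotheses of that principle for this pair.

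First I would take the $\omega$-instance $X$ to be the constant $1$ coloring of $[\omega]^2$, i.e., $c(x,y)=1$ for all $x<y$ in $\omega$. This is manifestly an $\omega$-instance of $\mathsf{LH}$ (indeed $\lim_y c(x,y)=1$ trivially for each $x$). For any finite initial segment $\sigma$ of (the set coding) $X$, choose $m \in \omega$ large enough that every pair mentioned in $\sigma$ lies in $[m]^2$. Then $p := X\restriction [m]^2$, which is identically $1$, is a condition for the forcing notion $P_\omega$ introduced in Definition~\ref{notion}. Since the model $M$ supplied by Lemma~\ref{extlem} is a $1$-elementary extension of $\omega$, $p$ is also a condition for $P_M$. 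Invoking Lemma~\ref{extlem}, there is then an $M$-instance $Y$ of $\mathsf{LH}$, lying in $S$, which extends $p$ in the sense of the forcing notion (so in particular agrees with $X$ on $[m]^2$, hence extends $\sigma$), and which has no solution in $S$. This verifies hypothesis (1) of the principle.

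Next I would verify hypothesis (2). An instance of $\mathsf{RT}^1_{<\infty}$ is coded as a pair $(k,c)$ with $c : \mathbb{N} \to k$, so the natural parameter is $k_X := k$. If $k$ is a standard natural number, then $\mathsf{RT}^1_k$ is provable in $\mathsf{RCA}_0$ (a finite disjunction across the $k$ color classes together with $\Sigma^0_1$-induction suffices). Hence for any $(M,S) \vDash \mathsf{RCA}_0$ and any $M$-instance $c : |M| \to k$ of $\mathsf{RT}^1_{<\infty}$ in $S$ with $k$ standard, there is an infinite homogeneous set in $S$, giving hypothesis (2).

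With both hypotheses checked, the principle directly delivers $\mathsf{LH} \nleq\sub{W}\sup{$\mathsf{RCA}_0$} \mathsf{RT}^1_{<\infty}$. The only step that requires any actual thought is the alignment between the forcing-notion sense of ``extending $p$'' in Lemma~\ref{extlem} and the function-theoretic sense of ``extending $\sigma$'' in the principle, and that alignment is automatic once $\sigma$ comes from the constant $1$ coloring, since the forcing extension clause $c(i,j)=1$ for $i<m\leq j$ is consistent with (indeed implied by) $X$. There is no genuine obstacle beyond this bookkeeping.
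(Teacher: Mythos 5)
Your proof is correct and matches the paper's intended argument: the corollary is presented as a direct application of the immediately preceding proposition, with hypothesis (1) supplied by Lemma~\ref{extlem} applied to a constant-$1$ condition exactly as you arrange, and hypothesis (2) by the provability of $\mathsf{RT}^1_k$ in $\mathsf{RCA}_0$ for each standard $k$. The bookkeeping you flag---reconciling extension of a condition in the sense of $P_M$ with extension of a finite initial segment of the coded instance---is the only detail the paper leaves implicit, and you handle it correctly.
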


We also have the following other example of a W-reducibility that
becomes a gW-reducibility in two steps when generalized to models of
$\mathsf{RCA}_0$.

\begin{prop}
\label{boundandstrt}
$\mathsf{stBound}^* \leq\sub{gW}\sup{$\mathsf{RCA}_0$,2}
\mathsf{stRT}^1_{<\infty}$ but $\mathsf{Bound}^*
\nleq\sub{W}\sup{$\mathsf{RCA}_0$} \mathsf{stRT}^1_{<\infty}$.
\end{prop}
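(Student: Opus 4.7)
For the positive direction $\mathsf{stBound}^* \leq\sub{gW}\sup{$\mathsf{RCA}_0$,2} \mathsf{stRT}^1_{<\infty}$, the plan is to give an explicit two-instance strategy for Player 2 in $\widehat{G}^{\mathsf{RCA}_0}(\mathsf{stRT}^1_{<\infty} \rightarrow \mathsf{stBound}^*)$, in analogy with the two-instance reduction to $\mathsf{Bound}^*$ proved just above. Given an $M$-instance $X$ of $\mathsf{stBound}^*$, Player 2 first plays the nondecreasing coloring $c_1(s) = $ the maximum first coordinate of any pair enumerated by $X$ by stage $s$; its range is $M$-bounded by the first-coordinate bound $N^*$, so any infinite homogeneous set Player 1 must respond with has constant value equal to $N^*$. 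Using this $N^*$, Player 2 then plays $c_2$, defined by fixing a $\Delta^0_1$-definable well-ordering of $\mathbb{N}^2$, letting $e^*$ be the canonical enumeration of $X$ in this order (with a pause symbol $*$ after $X$ is exhausted), and setting $c_2(s)$ to be the first coordinate of $e^*(s)$ when $e^*(s) \in X$, and $N^* + 1$ otherwise. Since each pair in $X$ has first coordinate at most $N^*$, the range of $c_2$ is uniformly contained in $\{0, 1, \dots, N^* + 1\}$, so $c_2$ is a legal $\mathsf{stRT}^1_{<\infty}$-instance in every model.

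The case analysis then splits on whether $X$ has a solution in $M$. If $X$ is $M$-finite, i.e.\ its second-coordinate projection is $M$-bounded, then $e^*$ eventually pauses, $c_2^{-1}(N^* + 1)$ is cofinite in $|M|$, and Player 1 must respond with an infinite homogeneous set of value $N^* + 1$; from any element of this set Player 2 recovers all of $X$ and wins by playing its maximum second coordinate. If $X$ has no solution in $M$, then $e^*$ never pauses, $c_2^{-1}(N^* + 1) = \emptyset$, and each preimage $c_2^{-1}(n)$ for $n \leq N^*$ is $M$-finite of size $|F'_n|$, so no infinite homogeneous set exists in $M$ and Player 1 cannot respond. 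This last configuration is consistent only because $\mathsf{B}\Pi^0_1$ fails in $M$ for this family, allowing $|M|$ to be the union of $N^* + 1$ many $M$-finite sets. The main technical point is verifying that $e^*$ is $\Delta^0_1$-definable from $X$ and that $c_2$'s range is bounded even when $|X|$ does not exist as an element of $M$.

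For the negative direction $\mathsf{Bound}^* \nleq\sub{W}\sup{$\mathsf{RCA}_0$} \mathsf{stRT}^1_{<\infty}$, I would argue by contradiction, in the spirit of Uftring's Example~\ref{uftex} and the general principle applied just above to derive Weihrauch non-reductions. Suppose $(\Phi_e, \Phi_i)$ witnesses such a reduction. Start with an $\omega$-instance $E$ of $\mathsf{Bound}^*$, using a coding in which the set indices are determined by the enumeration so that no initial segment pins down the number of sets; then $\Phi_e^E$ is an $\omega$-instance of $\mathsf{stRT}^1_{<\infty}$ and hence has a standard range bound $K$. In a countable $M \models \mathsf{RCA}_0$ in which $\mathsf{B}\Sigma^0_2$ fails, construct an $M$-instance $\widehat{E}$ of $\mathsf{Bound}^*$ that agrees with $E$ past the use needed to certify this range bound $K$, while enumerating the remaining pairs at later (possibly nonstandard) stages so that $\widehat{E}$ has nonstandardly many sets with $M$-unbounded union, and hence no solution in $M$. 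Then $\Phi_e^{\widehat{E}}$ still has range bounded by the standard $K$, so it has an infinite homogeneous set $\widehat{H}$ in $M$ by $\mathsf{RT}^1_K$ (which is provable in $\mathsf{RCA}_0$ for each standard $K$); applying $\Phi_i$ to $\widehat{E} \oplus \widehat{H}$ then yields a solution to $\widehat{E}$, a contradiction.

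The main obstacle in the second part is arranging the coding and timing so that $\widehat{E}$ can simultaneously (a) agree with $E$ long enough to confine $\Phi_e^{\widehat{E}}$'s range to be standardly bounded, and (b) have a nonstandard number of sets with $M$-unbounded union. This is the analogue of the ``delay past the use'' trick from the proof of $\mathsf{stBound}^* \nleq\sub{W}\sup{$\mathsf{RCA}_0$} \mathsf{Bound}^*$ in Example~\ref{uftex}, adapted to the fact that $\mathsf{stRT}^1_{<\infty}$-instances have no explicit numerical parameter like ``number of sets'' but only an implicit $\Sigma^0_2$ range bound; this is precisely what forces us to hand-pick a coding of $\mathsf{Bound}^*$ in which the set count is not fixed by any finite initial segment.
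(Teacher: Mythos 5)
Both halves of your proposal have genuine gaps, and in each case the paper's argument has to work harder than your outline suggests.

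For the forward reduction, your second coloring $c_2$ is the problem. An instance of $\mathsf{stBound}^*$ is only an \emph{enumeration} of $X$, and ``$X$ has been exhausted'' is a $\Pi^0_1$ event, so the pause symbol---and hence $e^*$ and $c_2$---are not $\Delta^0_1$-definable from the instance; the ``main technical point'' you flag is in fact false as stated. If you instead interpret ``pause'' computably (say $c_2(s)=N^*+1$ when no new pair appears at stage $s$), the move becomes legal but the win analysis fails: every class $c_2^{-1}(n)$ with $n\leq N^*$ is $M$-finite (column $n$ is bounded, hence $M$-finite, hence receives new elements at only $M$-finitely many stages), so the only color Player 1 can ever play is $N^*+1$; but the pause stages can be unbounded even when the union of the columns is unbounded (the enumeration may simply be slow), and an element of a homogeneous set of pause stages then carries no bound on $X$ at all. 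The paper's reduction sidesteps this entirely: it colors stage $n$ by the least index $i_n$ of the column whose current maximum is largest. Homogeneity with color $i$ means column $i$'s maximum dominates at cofinally many stages, so a single further application of $\mathsf{Bound}$ (which is W-reducible to $\mathsf{stRT}^1_{<\infty}$ over $\mathsf{RCA}_0$) to column $i$ bounds the whole union. Your first coloring (computing $N^*$) is correct but unnecessary; it is the extraction of the bound that needs the argmax idea.

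For the negative direction, the step ``construct $\widehat{E}$ agreeing with $E$ past the use needed to certify the range bound $K$'' has no content: that the range of $\Phi_e^E$ is contained in $[0,K]$ is a $\Pi^0_1$ fact about $E$, a conjunction of infinitely many finite-use facts, and no finite initial segment of $E$ certifies it. An $\widehat{E}$ agreeing with $E$ on any finite initial segment can have $\Phi_e^{\widehat{E}}$ of unbounded or nonstandardly bounded range. This is exactly the asymmetry with the immediately preceding proposition, where the target was $\mathsf{Bound}^*$ and the number of sets \emph{is} fixed by a single convergent computation with finite use; re-coding the \emph{source} problem, as you propose, does nothing to repair this. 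The paper instead works in a model of $\mathsf{B}\Sigma^0_2+\neg\mathsf{B}\Sigma^0_3$, fixes a $\Delta^0_2$ coloring $c$ with bounded color classes and no solution, and forces with $M$-finite enumerations of families $F_i\subseteq c^{-1}(i)$: a density argument either yields a generic $D$ with $\Phi_e^D$ of unbounded range (so not an instance of $\mathsf{stRT}^1_{<\infty}$), or yields a condition below which the range of $\Phi_e$ is \emph{uniformly} bounded by some $j$. Even then one is not done, since the instance $D$ ultimately built is $M$-finite and its homogeneous sets $H_p$ genuinely exist in the model; a further diagonalization against each $\Phi_i^{D\oplus H_p}$ is required, and keeping $D$ $M$-finite during that diagonalization is where $\mathsf{B}\Sigma^0_2$ is used. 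Your outline contains neither the uniformization of the range bound nor the diagonalization against $\Phi_i$, and both are essential.
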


\begin{proof}
For the first part, we argue in $\mathsf{RCA}_0$. Given an instance
$X$ of $\mathsf{stBound}^*$, let $E_{i,n}$ be the set of $k$ such that
$(i,k)$ has been enumerated into $X$ by stage $n$, and let $i_n$ be
the least $i$ that maximizes $\max E_{i,n}$ (which exists because the
function taking $i$ to $\max E_{i,n}$ is computable). We first produce
an instance of $\mathsf{stRT}^1_{<\infty}$ by giving $n$ the color
$i_n$. Given a solution $H$ to this instance, let $i$ be the color of
the elements of $H$. Now apply $\mathsf{Bound}$ (which is W-reducible
over $\mathsf{RCA}_0$ to $\mathsf{stRT}^1_{<\infty}$) to obtain a
bound $b$ on $\{k : (i,k) \in X\}$. This bound must be a solution to
$X$, because if $(j,k) \in X$ for some $j$ and $k>b$, then once
$(j,k)$ is enumerated into $X$ at some stage $m$, we cannot have
$i_n=i$ for $n \geq m$.

Now suppose that $\mathsf{Bound}^* \leq\sub{W}\sup{$\mathsf{RCA}_0$}
\mathsf{stRT}^1_{<\infty}$ via $\Phi_e$ and $\Phi_i$. We work over a
model $M$ of $\Sigma^0_1$-$\mathsf{PA}$ that satisfies
$\Sigma^0_2$-bounding but not $\Sigma^0_3$-bounding. Then there is a
$\Delta^0_2$ $M$-instance $c : |M| \rightarrow k$ of
$\mathsf{RT}^1_{<\infty}$ with no solution. Say that sets
$F_0,\ldots,F_{k-1}$ are acceptable if $c(n)=i$ for every $i<k$ and $n
\in F_i$. Notice that in this case, each $F_i$ is bounded, so an
enumeration of an acceptable family of sets is an $M$-instance of
$\mathsf{Bound}^*$.

Thinking of $M$-finite enumerations of acceptable families as a notion
of forcing, suppose that for each $j \in M$, the set of such
enumerations $E$ for which some element greater than $j$ is in the
range of $\Phi_e^E$ is dense. Then we can computably build an
enumeration $D$ of an acceptable family such that $\Phi_e^D$ has
unbounded range, and is thus not an instance of
$\mathsf{stRT}^1_{<\infty}$. As this situation cannot happen, there
must be a $j \in M$ and an $M$-finite enumeration $E$ of an
acceptable family such that for every enumeration $D$ of an acceptable
family extending $E$, the range of $\Phi_e^D$ is bounded by $j$.

Now we start building such a $D$ by monitoring $\Phi_i^{D \oplus H_p}$
for each $H_p = \{n : \Phi_e^D(n)=p\}$ with $p <^M j$. Whenever we see
$\Phi_i^{D \oplus H_p}$ return a number $m_p$, we enumerate $m_p+1$
into $F_{c(m_p+1)}$, where $F_0,\ldots,F_{k-1}$ is the family that $D$
is enumerating. The set of $p <^m j$ such that $m_p$ is ever defined
is a bounded $\Sigma^0_1$ set, and the map taking each $p$ in this set
to $m_p$ is computable, so the set of $m_p$'s is $M$-finite. But then
the restriction of $c$ to this set is also $M$-finite, because the
fact that $M$ satisfies $\Sigma^0_2$-bounding implies that the
intersection of a $\Delta^0_2$ set with an $M$-finite set is
$M$-finite. So $D$ is an $M$-finite extension of the $M$-finite
enumeration $E$, and hence is itself $M$-finite, and thus $\Phi_e^D$
is a computable instance of $\mathsf{stRT}^1_{<\infty}$, and hence
must have a solution. But then some $H_p$ with $p <^M j$ must be such
a solution, and hence $\Phi_i^{D \oplus H_p}$ must be a solution to
$D$. But we ensured that this is not the case, so we have a
contradiction.
\end{proof}

The first part of this proof shows more precisely that
$\mathsf{stBound}^* \leq\sub{W}\sup{$\mathsf{RCA}_0$} \mathsf{Bound}
\star \mathsf{stRT}^1_{<\infty}$ and that $\mathsf{Bound}^*
\leq\sub{W}\sup{$\mathsf{RCA}_0$} \mathsf{Bound} \star
\mathsf{RT}^1_{<\infty}$.

Combining the results above with Proposition~\ref{strtprop} gives us
the following pictures of the $\leq\sub{W}\sup{$\mathsf{RCA}_0$}$ and
$\leq\sub{gW}\sup{$\mathsf{RCA}_0$}$ cases, respectively.

\bigskip

\begin{equation}
\label{figfour}
\xymatrix@C+30pt{
 & {\mathsf{stRT}^1_{<\infty}} \ar[dl] \ar[dddr]
& & {\mathsf{LH}} \\
{\mathsf{RT}^1_{<\infty}} \ar@{.>}[d] & & {\mathsf{stBound}^*}\ar[d]\\
{\mathsf{RT}^1_3} \ar[d] & & {\mathsf{C}_{\mathbb N}^* \equiv \mathsf{Bound}^*}\ar[d]\\
{\mathsf{RT}^1_2} & & {\mathsf{C}_{\mathbb N} \equiv \mathsf{Bound}}
}
\end{equation}

\bigskip

\bigskip

\begin{equation}
\label{figfive}
\xymatrix@C-10pt{
 & {\mathsf{stRT}^1_{<\infty}} \ar[dl]
\ar[dr] & & {\mathsf{LH}} \\
{\mathsf{RT}^1_{<\infty}} \ar[d]  & & {\mathsf{C}_{\mathbb N}^* \equiv
\mathsf{Bound}^* \equiv \mathsf{stBound}^*}\ar[d]\\
{\mathsf{RT}^1_2 \equiv \mathsf{RT}^1_3 \equiv \cdots} &
& {\mathsf{C}_{\mathbb N} \equiv \mathsf{Bound}}
}
\end{equation}

\bigskip

\end{document}